\begin{document}

\theoremstyle{plain}
\newtheorem{C}{Convention}
\newtheorem{fact}{Fact}
\newtheorem*{SA}{Standing Assumption}
\newtheorem{theorem}{Theorem}
\newtheorem{condition}{Condition}
\newtheorem{lemma}{Lemma}
\newtheorem{proposition}{Proposition}
\newtheorem{corollary}{Corollary}
\newtheorem{claim}[theorem]{Claim}
\newtheorem{definition}{Definition}
\newtheorem{Ass}[theorem]{Assumption}
\newcommand{\q}{Q}
\theoremstyle{definition}
\newtheorem{remark}{Remark}
\newtheorem{note}[theorem]{Note}
\newtheorem{example}{Example}
\newtheorem*{exampleN}{Example 1 (continued)}
\newtheorem{assumption}[theorem]{Assumption}
\newtheorem*{notation}{Notation}
\newtheorem*{assuL}{Assumption ($\mathbb{L}$)}
\newtheorem*{assuAC}{Assumption ($\mathbb{AC}$)}
\newtheorem*{assuEM}{Assumption ($\mathbb{EM}$)}
\newtheorem*{assuES}{Assumption ($\mathbb{ES}$)}
\newtheorem*{assuM}{Assumption ($\mathbb{M}$)}
\newtheorem*{assuMM}{Assumption ($\mathbb{M}'$)}
\newtheorem*{assuL1}{Assumption ($\mathbb{L}1$)}
\newtheorem*{assuL2}{Assumption ($\mathbb{L}2$)}
\newtheorem*{assuL3}{Assumption ($\mathbb{L}3$)}
\newtheorem{charact}[theorem]{Characterization}

\newcommand{\Law}{\ensuremath{\mathop{\mathrm{Law}}}}
\newcommand{\loc}{{\mathrm{loc}}}
\newcommand{\Log}{\ensuremath{\mathop{\mathscr{L}\mathrm{og}}}}
\newcommand{\Meixner}{\ensuremath{\mathop{\mathrm{Meixner}}}}
\newcommand{\of}{[\hspace{-0.06cm}[}
\newcommand{\gs}{]\hspace{-0.06cm}]}
\newcommand{\tri}{|\hspace{-0.06cm}|\hspace{-0.06cm}|}
\renewcommand{\epsilon}{\varepsilon}
\let\MID\mid
\renewcommand{\mid}{|}

\let\SETMINUS\setminus
\renewcommand{\setminus}{\backslash}

\def\stackrelboth#1#2#3{\mathrel{\mathop{#2}\limits^{#1}_{#3}}}

\renewcommand{\theequation}{\thesection.\arabic{equation}}
\numberwithin{equation}{section}

\newcommand\llambda{{\mathchoice
      {\lambda\mkern-4.5mu{\raisebox{.4ex}{\scriptsize$\backslash$}}}
      {\lambda\mkern-4.83mu{\raisebox{.4ex}{\scriptsize$\backslash$}}}
      {\lambda\mkern-4.5mu{\raisebox{.2ex}{\footnotesize$\scriptscriptstyle\backslash$}}}
      {\lambda\mkern-5.0mu{\raisebox{.2ex}{\tiny$\scriptscriptstyle\backslash$}}}}}

\newcommand{\prozess}[1][L]{{\ensuremath{#1=(#1_t)_{0\le t\le T}}}\xspace}
\newcommand{\prazess}[1][L]{{\ensuremath{#1=(#1_t)_{0\le t\le T^*}}}\xspace}

\newcommand{\tr}{\operatorname{tr}}
\newcommand{\lijepoa}{{\mathscr{A}}}
\newcommand{\lijepob}{{\mathscr{B}}}
\newcommand{\lijepoc}{{\mathscr{C}}}
\newcommand{\lijepod}{{\mathscr{D}}}
\newcommand{\lijepoe}{{\mathscr{E}}}
\newcommand{\lijepof}{{\mathscr{F}}}
\newcommand{\lijepog}{{\mathscr{G}}}
\newcommand{\lijepok}{{\mathscr{K}}}
\newcommand{\lijepoo}{{\mathscr{O}}}
\newcommand{\lijepop}{{\mathscr{P}}}
\newcommand{\lijepoh}{{\mathscr{H}}}
\newcommand{\lijepom}{{\mathscr{M}}}
\newcommand{\lijepou}{{\mathscr{U}}}
\newcommand{\lijepov}{{\mathscr{V}}}
\newcommand{\lijepoy}{{\mathscr{Y}}}
\newcommand{\cF}{{\mathscr{F}}}
\newcommand{\cG}{{\mathscr{G}}}
\newcommand{\cH}{{\mathscr{H}}}
\newcommand{\cM}{{\mathscr{M}}}
\newcommand{\cD}{{\mathscr{D}}}
\newcommand{\bD}{{\mathbb{D}}}
\newcommand{\bF}{{\mathbb{F}}}
\newcommand{\bG}{{\mathbb{G}}}
\newcommand{\bH}{{\mathbb{H}}}
\newcommand{\dd}{d}
\newcommand{\ddd}{\operatorname{d}}
\newcommand{\er}{{\mathbb{R}}}
\newcommand{\ce}{{\mathbb{C}}}
\newcommand{\erd}{{\mathbb{R}^{d}}}
\newcommand{\en}{{\mathbb{N}}}
\newcommand{\de}{{\mathrm{d}}}
\newcommand{\im}{{\mathrm{i}}}
\newcommand{\indik}{{\mathbf{1}}}
\newcommand{\D}{{\mathbb{D}}}
\newcommand{\E}{E}
\newcommand{\N}{{\mathbb{N}}}
\newcommand{\Q}{{\mathbb{Q}}}
\renewcommand{\P}{{\mathbb{P}}}
\newcommand{\ud}{\operatorname{d}\!}
\newcommand{\ii}{\operatorname{i}\kern -0.8pt}
\newcommand{\cadlag}{c\`adl\`ag }
\newcommand{\p}{P}
\newcommand{\F}{(\mathcal{F}_t)_{t \geq 0}}
\newcommand{\1}{\mathbf{1}}
\newcommand{\f}{\mathscr{F}^{\hspace{0.03cm}0}}
\newcommand{\lle}{\langle\hspace{-0.085cm}\langle}
\newcommand{\rre}{\rangle\hspace{-0.085cm}\rangle}
\newcommand{\llbr}{[\hspace{-0.085cm}[}
\newcommand{\rrbr}{]\hspace{-0.085cm}]}

\def\EM{\ensuremath{(\mathbb{EM})}\xspace}

\newcommand{\la}{\langle}
\newcommand{\ra}{\rangle}

\newcommand{\Norml}[1]{%
{|}\kern-.25ex{|}\kern-.25ex{|}#1{|}\kern-.25ex{|}\kern-.25ex{|}}

\title[Lyapunov Criteria for the Feller-Dynkin Property of MPs]{Lyapunov Criteria for the Feller-Dynkin Property of Martingale Problems} 
\author[D. Criens]{David Criens}
\address{D. Criens - Technical University of Munich, Center for Mathematics, Germany}
\email{david.criens@tum.de}

\keywords{Feller-Dynkin Process, \(C_b\)-Feller Process, Martingale Problem, Lyapunov Function, Switching Diffusions\vspace{1ex}}

\subjclass[2010]{60J25,	60G44, 60H10}

\thanks{D. Criens - Technical University of Munich, Center for Mathematics, Germany,  \texttt{david.criens@tum.de}.}

\thanks{Acknowledgement: The author thanks Stefan Junk for valuable discussions. Moreover, he thanks the referees for carefully reading the article and for many helpful comments.}

\date{\today}
\maketitle

\frenchspacing
\pagestyle{myheadings}

\begin{abstract}
We give necessary and sufficient criteria for the Feller-Dynkin property of solutions to martingale problems in terms of Lyapunov functions. Moreover, we derive a Khasminskii-type integral test for the Feller-Dynkin property of multidimensional diffusions with random switching. For one dimensional switching diffusions with state-independent switching, we provide an integral-test for the Feller-Dynkin property. 
\end{abstract}

\section{Introduction}
It is a classical question for a Markov process whether its transition semigroup is a self-map on the space of bounded continuous functions and on the space of continuous functions vanishing at infinity, respectively.
If the first property holds we call the Markov process a \(C_b\)-Feller process and when the second property holds we call it a Feller-Dynkin process. In the literature Feller-Dynkin processes are often also called Feller processes, see, for instance, \cite{gall2016brownian,liggett2010continuous,RY}. Our terminology is borrowed from \cite{RW1}.

Let us give some examples for applications of the Feller-Dynkin property.
For empirical laws of i.i.d. processes it is interesting to study the probability of large deviations. Sanov's theorem implies that a large deviation principle holds, see, for instance, \cite[Section 6.2]{dembo2009large}. However, the rate function is in general given in terms of an entropy and therefore hard to understand. 
In \cite{kraaij2018} it was recently shown that for empirical laws of i.i.d. Feller-Dynkin processes whose generators have suitable cores the rate function can be decomposed into a rate function for the initial time and an integral over a Lagrangian depending on position and speed. This decomposition opens up the possibility for further analysis.
 
 From an analytical perspective, the Feller-Dynkin semigroup and its generator play an important role in the study of evolution equations.
Namely, if \((L, \mathcal{D}(L))\) is the generator of a Feller-Dynkin semigroup and \(\mathbb{X} = (C_0, \|\cdot\|_\infty)\) is the Banach space of continuous functions vanishing at infinity, then for any non-linearity \(h\) satisfying a Lipschitz condition the deterministic evolution equation 
\begin{align*} 
\dd u (t)=\big( L u(t) + h(t, u(t))\big)\dd t,\quad u(0) = f \in C_0,
\end{align*}
has a mild solution in \(\mathbb{X}\), see, for instance, \cite[Section 6.1]{pazy2012semigroups}.  While such an existence result is of purely analytic nature, the connection of the semigroup and its generator to a stochastic process can be useful to verify its prerequisites. Another point of contact between analysis and probability theory is the stochastic representation of solutions to evolution equations on \(\mathbb{X}\) via the Feynman-Kac formula, see, for instance, \cite[Theorem 3.47]{liggett2010continuous}.  
The stochastic interpretations provided by these representations can help to understand the behavior of solutions. 

Because Markov processes are usually defined by its infinitesimal description, it is particularly interesting to find criteria for the Feller properties in terms of the generalized infinitesimal generator of the Markov process.

In this article we give such criteria for Markov processes defined via abstract martingale problems (MPs). 
Our contributions are two-fold. First, we show that the Feller-Dynkin property of can be described by a Lyapunov-type criterion in the spirit of the classical Lyapunov-type criteria for explosion, recurrence and transience, see, e.g., \cite{khasminskii2011stochastic, pinsky1995positive}. More precisely, we prove a sufficient condition for the Feller-Dynkin property, see Theorem \ref{theo: Ly} below, and a condition to reject the Feller-Dynkin property, see Theorem \ref{prop:cont} below.
Under additional assumptions on the input data, we extend the sufficient condition for the Feller-Dynkin property to be necessary, see Theorem \ref{coro: 1} below.
The necessity is for instance useful when one studies coupled processes, i.e. processes whose infinitesimal description is built from the infinitesimal description of other processes. We illustrate this in our applications. 
Moreover, we provide a technical condition for a reduction or an enlargement of the input data of a MP, see Proposition \ref{prop: redc} below. A reduction helps to check the additional assumption of our necessary and sufficient criterion, while an enlargement simplifies finding Lyapunov functions for our sufficient conditions.
We apply our criteria to derive conditions for the Feller-Dynkin property of multidimensional diffusions with random switching. In particular, we derive a Khasminskii-type integral test for the Feller-Dynkin property.

Our second contribution is a systematic study of the Feller-Dynkin property of switching diffusions with state-independent switching. In other words, we consider a process \((Y_t, Z_t)_{t \geq 0}\), where \((Z_t)_{t \geq 0}\) is a continuous-time Feller-Dynkin Markov chain and \((Y_t)_{t \geq 0}\) solves the stochastic differential equation (SDE)
\[
\dd Y_t = b(Y_t, Z_t)\dd t + \sigma(Y_t, Z_t)\dd W_t, 
\]
where \((W_t)_{t \geq 0}\) is a Brownian motion. One may think of the process \((Y_t)_{t \geq 0}\) as a diffusion in a random environment given by the Markov chain \((Z_t)_{t \geq 0}\). 
The process \((Y_t)_{t \geq 0}\) has a natural relation to processes with fixed environments, i.e. solutions to the SDEs
\begin{align}\label{eq:stateSDE}
\dd Y^k_t = b(Y^k_t, k) \dd t + \sigma(Y^k_t, k)\dd W_t, 
\end{align}
where \(k\) is in the state space of \((Z_t)_{t \geq 0}\). When \((Y_t, Z_t)_{t \geq 0}\) is a \(C_b\)-Feller process and the SDEs \eqref{eq:stateSDE} satisfy weak existence and pathwise uniqueness, we show that \((Y_t, Z_t)_{t \geq 0}\) is a Feller-Dynkin process if and only if the processes in the fixed environments are Feller-Dynkin processes. Furthermore, using a limit theorem for switching diffusions, see Theorem \ref{theo: limit} below, we show that \((Y_t, Z_t)_{t \geq 0}\) is a \(C_b\)-Feller process whenever it exists uniquely and the coefficients are continuous. 
We also explain that the uniqueness of \((Y_t, Z_t)_{t \geq 0}\) is implied by weak existence and pathwise uniqueness of the diffusions in the fixed environments. 
For the one dimensional case we deduce an equivalent integral-test for the Feller-Dynkin property of \((Y_t, Z_t)_{t \geq 0}\) and for multidimensional settings we give a Khasminskii-type integral test.

We end this introduction with comments on related literature. To the best of our current knowledge, Lyapunov-type criteria for the Feller-Dynkin property are only used in specific case studies and a systematic study as given in this article does not appear in the literature.
For continuous-time Markov chains, explicit conditions for the Feller-Dynkin property can be found in \cite{LI2009653,doi:10.1112/jlms/s2-5.2.267}. 
In \cite{LI2009653} also a Lyapunov-type condition appears. 
Infinitesimal conditions for the Feller-Dynkin property of diffusions are given in \cite{Azencott1974}. 
In the context of jump-diffusions, linear growth conditions for the Feller-Dynkin property were recently proven in \cite{kuhn2018b,kuhn2018a}. The proofs include a Lyapunov-type argument based on Gronwall's lemma.
For switching diffusions the \(C_b\)-Feller and the strong Feller property are studied profoundly, see, for instance,  \cite{doi:10.1137/16M1059357, doi:10.1137/15M1013584, doi:10.1137/16M1087837,yin2009hybrid}. 
Here, we say that the strong Feller property  holds if the transition semigroup maps bounded functions to bounded continuous functions. It is clear that the strong Feller property implies the \(C_b\)-Feller property. We stress that neither the strong Feller property nor the Feller-Dynkin property implies the other. An easy example for a Feller-Dynkin process which does not have the strong Feller property is the linear motion and an example for a strong Feller process, which does not have the Feller-Dynkin property is given in Example \ref{ex: FD not SF} below.
We think that our study of the Feller-Dynkin property for switching diffusions is the first of its kind. Also our continuity criterion for the \(C_b\)-Feller property in the state-independent case seems to be new.

The article is structured as follows. In Section \ref{sec: set} we explain our setup. In particular, in Section \ref{sec: 2.2} we recall the different concepts for the Feller properties of martingale problems. In Section \ref{sec: main} we discuss Lyapunov-type conditions for the Feller-Dynkin property in a general abstract setting and in Section \ref{sec: app} we discuss the case of switching diffusions.
A Skorokhod-type existence result for state-independent switching diffusions can be found in Appendix \ref{appendix}.

\section{The Feller Properties of Martingale Problems}\label{sec: set}
\subsection{The Setup}\label{sec: setup}
	Let \(S\) be a locally compact Hausdorff space with countable base (LCCB space), define \(\Omega\) to be the space of all \cadlag functions \(\mathbb{R}_+ \to S\) and let \((X_t)_{t \geq 0}\) be the coordinate process on \(\Omega\), i.e. the process defined by \(X_t(\omega) = \omega(t)\) for \(\omega \in \Omega\) and \(t \in \mathbb{R}_+\).
	We set \(\mathcal{F} \triangleq \sigma(X_t, t \in \mathbb{R}_+)\) and
\(
\mathcal{F}_t \triangleq \bigcap_{s > t} \mathcal{F}^o_s,\) where \(\mathcal{F}^o_t \triangleq \sigma(X_s, s \in [0, t]).
\)
If not stated otherwise, all terms such as \emph{local martingale, supermartingale,} etc. refer to \(\F\) as the underlying filtration. In general, we equip \(\Omega\) with the Skorokhod topology (see \cite{EK, JS}). In this case, \(\mathcal{F}\) is the Borel \(\sigma\)-field on \(\Omega\), see \cite[Proposition 3.7.1]{EK}.

We use standard notation for function spaces, i.e. for example we denote by \(M(S)\) the set of Borel functions \(S \to \mathbb{R}\), by \(B(S)\) the set of bounded Borel functions \(S \to \mathbb{R}\), by \(C(S)\) the set of continuous functions \(S \to \mathbb{R}\) and by \(C_0(S)\) the space of continuous functions \(S \to \mathbb{R}\) which are vanishing at infinity, etc.
We take the following four objects as input data for our abstract MP:
\begin{enumerate}
	\item[(i)] A set \(D \subseteq C(S)\) of test functions.
	\item[(ii)] A map \(\mathcal{L} \colon D \to M(S)\) satisfying
	\begin{align}\label{eq: Lf int}
	\int_0^t \big|\mathcal{L} f(X_s (\omega))\big|\dd s < \infty
	\end{align}
	for all \(t \in \mathbb{R}_+,\omega \in \Omega\) and \(f \in D\). We think of \(\mathcal{L}\) as a candidate for an extended generator in the spirit of \cite[Definition VII.1.8]{RY}.
	\item[(iii)] A set \(\Sigma \in \mathcal{F}\), which can be seen as the state space for the paths.
	\item[(iv)] A Borel probability measure \(\eta\) on \(S\), which we use as initial law.
\end{enumerate}
\begin{definition}\label{def: MP}
A probability measure \(P\) on \((\Omega, \mathcal{F})\) is called a solution to the MP \((D, \mathcal{L}, \Sigma, \eta)\) if \(P(\Sigma) =1, P \circ X^{-1}_0 = \eta\) and for all \(f \in D\) the process
\begin{align}\label{eq: Mf}
f(X_t) - f (X_0) - \int_0^t \mathcal{L} f(X_{s}) \dd s,\quad t \in \mathbb{R}_+, 
\end{align}
is a local \(P\)-martingale. When \(\eta = \delta_x\) for some \(x \in S\), we write \((D, \mathcal{L}, \Sigma, x)\) instead of \((D, \mathcal{L}, \Sigma, \delta_x)\). Here, \(\delta_x\) denotes the Dirac measure on the point \(x \in S\). 
\end{definition}
\begin{example}\label{expl: 1}
	The following MP corresponds to the classical MP of Stroock and Varadhan \cite{SV}.
Let \(S \triangleq \mathbb{R}^d, D \triangleq C^{2}_b(\mathbb{R}^d)\),
\begin{align}\label{eq: SV}
\mathcal{L} f (x) \triangleq \langle \nabla f(x), b(x)\rangle + \tfrac{1}{2} \textup{ trace } (\nabla^2 f(x) a(x)), 
\end{align}
where \(\nabla\) denotes the gradient, \(\nabla^2\) denotes the Hessian matrix and \(b\colon \mathbb{R}^d \to \mathbb{R}^d\) and \(a \colon \mathbb{R}^d \to \mathbb{S}^d\) are locally bounded Borel functions with \(\mathbb{S}^d\) denoting the set of all real symmetric non-negative definite \(d \times d\) matrices, and \(\Sigma \triangleq \{\omega \in \Omega \colon t \mapsto \omega(t)\text{ is continuous}\}\). 
We have \(\Sigma \in \mathcal{F}\), because \(\Sigma\) is a closed subset of \(\Omega\), see \cite[Problem 3.25]{EK}.
\end{example}

In the remaining of this article we impose the following assumption.
\begin{SA}
	For all \(x \in S\) the MP \((D, \mathcal{L}, \Sigma, x)\) has a solution \(P_x\). 
\end{SA} 
Conditions for the existence of solutions in diffusion settings can be found in \cite{KaraShre, RY, SV}. For conditions in jump-diffusions setups we refer to \cite{MatterIII, criens19a, kuhn18}. Conditions for abstract MPs can be found in \cite{EK}. For switching diffusions with state-independent switching we provide a Skorokhod-type existence result in Appendix \ref{appendix}.

\subsection{The Markov, the \(C_b\)-Feller and the Feller-Dynkin Property of MPs}\label{sec: 2.2}
The family \((P_x)_{x \in S}\) is called a \emph{Markov family} or simply \emph{Markov} if the map \(x \mapsto P_x(A)\) is Borel for all \(A \in \mathcal{F}\) and for all \(x\in S, t \in \mathbb{R}_+\)
and all \(G \in \mathcal{F}\) we have \(P_x\)-a.s.
 \begin{align}\label{eq: SMP}
 P_x \big(\theta^{-1}_t G |\mathcal{F}_t\big) = P_{X_t}(G),
 \end{align}
 where \(\theta_t \omega (s) \triangleq \omega(t + s)\) denotes the shift operator. 
We call \eqref{eq: SMP} the \emph{Markov property}.
 The family \((P_x)_{x \in S}\) is called a \emph{strong Markov family} or simply \emph{strongly Markov} if \((P_x)_{x \in S}\) is Markov and for all \(x \in S\), all stopping times \(\xi\) and all \(G \in \mathcal{F}\) we have \(P_x\)-a.s. on \(\{\xi < \infty\}\)
 \begin{align}\label{eq: strong MP}
P_x \big(\theta^{-1}_\xi G |\mathcal{F}_\xi\big) = P_{X_\xi} (G).
 \end{align}
 The identity \eqref{eq: strong MP} is called the \emph{strong Markov property}.
As the following proposition shows, many families of solutions to MPs are strongly Markov. For reader's convenience we provide a sketch of the proof, which mimics the proof of \cite[Theorem 4.4.2]{EK}.
 \begin{proposition}\label{prop: markov}
If \(D\) is countable, \(D \subseteq C_b(S), \mathcal{L}(D) \subseteq B_\textup{loc}(S)\), \((P_x)_{x \in S}\) is unique and 
\(
\Sigma \subseteq \theta^{-1}_\xi \Sigma 
\)
for all bounded stopping times \(\xi\), then \((P_x)_{x \in S}\) is strongly Markov.
\end{proposition}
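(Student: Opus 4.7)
The plan is to follow the classical strong Markov proof for martingale problems: for fixed $x \in S$ and a bounded stopping time $\xi$, I would construct a regular conditional distribution of the shifted coordinate process given $\mathcal{F}_\xi$ under $P_x$, show that $P_x$-almost surely this conditional distribution solves the MP $(D, \mathcal{L}, \Sigma, X_\xi(\omega))$, and then invoke uniqueness to identify it with $P_{X_\xi(\omega)}$. The countability of $D$, the local boundedness of $\mathcal{L}(D)$, and the invariance condition $\Sigma \subseteq \theta_\xi^{-1}\Sigma$ are the three ingredients that make each step go through.

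The first task is to establish Borel measurability of $x \mapsto P_x(A)$ for every $A \in \mathcal{F}$. Since $\Omega$ with the Skorokhod topology is Polish and $\mathcal{F}$ is its Borel $\sigma$-field, the space $\mathcal{P}(\Omega)$ of Borel probability measures on $\Omega$ is Polish. The requirements in Definition \ref{def: MP} translate into countably many jointly measurable constraints on the pair $(x,P) \in S \times \mathcal{P}(\Omega)$: the constraint $P(\Sigma) = 1$ is Borel in $P$, $P \circ X_0^{-1} = \delta_x$ is Borel in $(x,P)$, and for each $f \in D$ the local martingale condition can be reduced, by stopping at hitting times of an exhausting sequence of compact sets (so that the integrand stays bounded thanks to $\mathcal{L} f \in B_{\mathrm{loc}}(S)$ and $f \in C_b(S)$), to countably many conditional-expectation identities along rational times tested against a countable generator of $\mathcal{F}_s$. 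Uniqueness then makes $x \mapsto P_x$ a single-valued Borel measurable section of this set.

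For the strong Markov step, let $Q_\omega$ denote a regular conditional distribution of the image measure $P_x \circ \theta_\xi^{-1}$ given $\mathcal{F}_\xi$, which exists because $\Omega$ is Polish. I would then verify that for $P_x$-a.e.\ $\omega$ the measure $Q_\omega$ is a solution of $(D, \mathcal{L}, \Sigma, X_\xi(\omega))$. The initial law condition is immediate from $X_0 \circ \theta_\xi = X_\xi$. The path condition $Q_\omega(\Sigma) = 1$ uses the hypothesis $\Sigma \subseteq \theta_\xi^{-1}\Sigma$ precisely: it upgrades $P_x(\Sigma) = 1$ to $P_x(\theta_\xi^{-1}\Sigma) = 1$, which translates into $Q_\omega(\Sigma) = 1$ $P_x$-a.s. by definition of the regular conditional distribution.

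The main obstacle is verifying the local martingale property for $Q_\omega$. Writing $M^f_t \triangleq f(X_t) - f(X_0) - \int_0^t \mathcal{L} f(X_s)\,\mathrm{d}s$, the identity
\begin{equation*}
M^f_{\xi + t}(\omega) - M^f_\xi(\omega) = M^f_t(\theta_\xi \omega) - M^f_0(\theta_\xi \omega)
\end{equation*}
together with optional sampling applied to the $P_x$-local martingale $M^f$ translates the martingale property after time $\xi$ into the required martingale property under $Q_\omega$. Localizing by the exit times from an exhausting sequence of compact subsets of $S$ turns the relevant stopped processes into bounded martingales, thanks to $f \in C_b(S)$ and $\mathcal{L} f \in B_{\mathrm{loc}}(S)$, so that the conditional expectations pass cleanly through the disintegration for $P_x$-a.e.\ $\omega$; countability of $D$ permits choosing a single exceptional null set valid for all $f \in D$ simultaneously. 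Combining this with the measurable uniqueness from the first step gives $Q_\omega = P_{X_\xi(\omega)}$ for $P_x$-a.e.\ $\omega$, which is exactly the strong Markov property \eqref{eq: strong MP}; the Markov property \eqref{eq: SMP} is the special case of deterministic $\xi$.
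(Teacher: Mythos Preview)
Your proposal is correct and follows the classical strong-Markov-via-uniqueness strategy. The paper's proof differs in one organizational respect worth noting: rather than showing that the regular conditional distribution $Q_\omega$ solves the MP with degenerate initial law $\delta_{X_\xi(\omega)}$ for $P_x$-almost every $\omega$, the paper fixes $F \in \mathcal{F}_\xi$ with $P_x(F) > 0$, forms the two \emph{averaged} measures
\[
P_1 \triangleq \frac{E^{P_x}\big[\1_F\, P_x(\theta_\xi^{-1}\cdot \mid \mathcal{F}_\xi)\big]}{P_x(F)}, \qquad P_2 \triangleq \frac{E^{P_x}\big[\1_F\, P_{X_\xi}\big]}{P_x(F)},
\]
shows that both solve the MP with the (generally non-degenerate) initial law $\zeta = P_x(F)^{-1} E^{P_x}[\1_F \1\{X_\xi \in \cdot\}]$, and then invokes the extension of uniqueness to arbitrary initial laws (Proposition~\ref{prop: existence initial law}) to conclude $P_1 = P_2$. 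Your pointwise approach avoids this detour through general initial laws and uses only uniqueness for Dirac initial conditions, at the cost of managing the null sets for the countable family of martingale identities more explicitly; the paper's averaged approach hides that bookkeeping inside Proposition~\ref{prop: existence initial law}. Both routes rest on the same ingredients (countability of $D$, local boundedness of $\mathcal{L}f$, the shift invariance of $\Sigma$) and both ultimately cite the same Karatzas--Shreve lemma for the verification that the shifted/conditional measure solves the MP. One small omission in your write-up: you only treat bounded $\xi$, whereas the strong Markov property as stated is for general stopping times on $\{\xi < \infty\}$; the paper handles this by the standard approximation argument from \cite[Problem~2.6.9]{KaraShre}, which you should mention as well.
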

\begin{proof}[Sketch of Proof]
	Due to Proposition \ref{prop: existence initial law} in Appendix \ref{app: prop exietence initial law}, the map \(x \mapsto P_x(A)\) is Borel for all \(A \in \mathcal{F}\) and, due to the argument used in the solution to \cite[Problem 2.6.9]{KaraShre} (see \cite[p. 121]{KaraShre}) it suffices to show the strong Markov property for all bounded stopping times.
		Let \(\xi\) be a bounded stopping time, set \(P \equiv P_x\) and fix \(F \in \mathcal{F}_\xi\) with \(P(F) > 0\). Using the argument from the proof of \cite[Lemma 5.4.19]{KaraShre} one checks that the probability measures 
		\[
		P_1 \triangleq \frac{E^P \big[ \1_F P (\theta^{-1}_\xi \cdot |\mathcal{F}_\xi)\big]}{P(F)},\quad P_2 \triangleq \frac{E^P \big[\1_F P_{X_\xi}\big]}{P(F)}
		\]
		both solve the MP \((D, \mathcal{L}, \Sigma, \zeta)\), where \(\zeta \triangleq P(F)^{-1} E^P [\1_F \1 \{X_\xi \in \ \cdot\ \}]\).
Due to Proposition \ref{prop: existence initial law} in Appendix \ref{app: prop exietence initial law}, we have
	\(
	P_1 = P_2,\)
	which implies that 
	\[
	E^P \big[ \1_F P(\theta_\xi^{-1} G |\mathcal{F}_\xi)\big] = E^P \big[\1_F P_{X_\xi} (G)\big],\quad G \in \mathcal{F}.
	\]
	Because this identity holds trivially when \(P(F) = 0\), for all \(G \in \mathcal{F}\) we conclude that \(P\)-a.s.
	\(
	P(\theta^{-1}_{\xi} G |\mathcal{F}_\xi) = P_{X_\xi} (G).
	\)
	In other words, the strong Markov property holds for all bounded stopping times. 
\end{proof}
 If \((P_x)_{x \in S}\) is not unique it might still be possible to pick a Markov family from the set of solutions. For instance, in the setting of Example \ref{expl: 1}, this is the case when \(a\) and \(b\) are bounded and continuous, see \cite[Theorem 12.2.3]{SV}. Conditions for the selection of a Markov family in jump-diffusion cases can be found in \cite{kuhn18}.
 
 In the case where \((P_x)_{x \in S}\) is Markov, we can define a semigroup \((T_t)_{t \geq 0}\) of positive contraction operators on \(B(S)\) via  \[T_t f(x) \triangleq E_x\big[f(X_t)\big],\quad f \in B(S).\] 
 It is obvious that \(T_t\) is a positive contraction, i.e. if \(f (S) \subseteq [0, 1]\) then also \(T_t f(S) \subseteq [0, 1]\), and the semigroup property follows easily from the Markov property \eqref{eq: SMP}. 
 
 If \((P_x)_{x \in S}\) is Markov and \begin{align}\label{eq: FP}
 T_t ( C_b(S)) \subseteq C_b(S),\end{align} we call \((P_x)_{x \in S}\) a \emph{\(C_b\)-Feller family} or simply \(C_b\)-\emph{Feller}. The inclusion \eqref{eq: FP} is called the \(C_b\)-\emph{Feller property}.
 The \(C_b\)-Feller property of the family \((P_x)_{x \in S}\) has a natural relation to the continuity of \(x \mapsto P_x\) for which many conditions are known, see, e.g., \cite[Theorem IX.4.8]{JS} for conditions in a jump diffusion setting. 
 Here, \(x \mapsto P_x\) is said to be continuous if \(P_{x_n} \to P_x\) weakly as \(n \to \infty\) whenever \(x_n \to x\) as \(n \to \infty\).
 In the setup of Example \ref{expl: 1}, if \((P_x)_{x \in S}\) is unique, \((P_x)_{x \in S}\) is \(C_b\)-Feller whenever \(b\) and \(a\) are continuous. However, in the same setting,  
 if \((P_x)_{x \in S}\) is not unique, it might not be possible to choose a \(C_b\)-Feller family from the set of solutions, even if the coefficients are continuous and bounded, see \cite[Exercise 12.4.2]{SV}. 
 
 We call \((P_x)_{x \in S}\) a \emph{Feller-Dynkin family} or simply \emph{Feller-Dynkin} if it is a \(C_b\)-Feller family and \begin{align}\label{eq: FDP}
 T_t(C_0(S)) \subseteq C_0(S).\end{align}
 The inclusion \eqref{eq: FDP} is called the \emph{Feller-Dynkin property}.
 From a semigroup point of view, the definition of a Feller-Dynkin semigroup also includes strong continuity in zero, see, e.g., \cite[Definition III.6.5]{RW1}. In our case, when \((P_x)_{x \in S}\) is Feller-Dynkin, the semigroup \((T_t)_{t \geq 0}\) is strongly continuous in zero due to the right-continuous paths of \((X_t)_{t \geq 0}\), the dominated convergence theorem and \cite[Lemma III.6.7]{RW1}.
Any Feller-Dynkin family is also strongly Markov, see, e.g., \cite[Theorem 17.17]{Kallenberg}.
Let us also comment on the issue of uniqueness. If \((P_x)_{x \in S}\) is Feller-Dynkin and \((L, \mathcal{D}(L))\) is its generator, i.e.
\begin{align}\label{eq: gen1}
Lf \triangleq \lim_{t \searrow 0} \frac{T_t f - f}{t}
\end{align}
for \(f \in \mathcal{D}(L)\), where \begin{align}\label{eq: gen2}
\mathcal{D}(L) \triangleq \left\{f \in C_0(S) \colon \exists g \in C_0(S) \text{ such that } \lim_{t \searrow 0} \bigg\| \frac{T_t f - f}{t} - g \bigg\|_\infty = 0\right\}, 
\end{align}
then \(P_x\) is the unique solution to the MP \((D, L, \Sigma, x)\), where \(D\) is any core for \(L\), see \cite[Theorem 4.10.3]{kolokol2011markov}. 
Consequently, conditions for the Feller-Dynkin property imply in some cases also uniqueness. 

For an overview on different concepts of Feller properties from a semigroup point of view we refer to the first chapter in \cite{MatterIII}.

	If \(S = \mathbb{R}^d\) and \((P_x)_{x \in \mathbb{R}^d}\) is Feller-Dynkin with generator \((L, \mathcal{D}(L))\) such that \(C_c^\infty(\mathbb{R}^d) \subseteq \mathcal{D}(L)\), then \(L\) is of the following form
	\[
	L f (x) = - \int e^{i \langle x, y\rangle} q(x, y) \hat{f}(y)\dd y, \quad f \in C_c^\infty(\mathbb{R}^d),
	\]
	where \(i\) is the imaginary number, \(\hat{f} (y) \triangleq (2 \pi)^{- d} \int e^{- i \langle y, x\rangle} f(x)\dd x\) denotes the Fourier transform of \(f\) and 
	\begin{align*}
	q(x, \xi) = q(x, 0) &- i \langle b(x), \xi\rangle + \tfrac{1}{2} \langle a(x) \xi, \xi\rangle \\&+ \int\big(1 - e^{i \langle y, \xi\rangle} + i \langle \xi, y\rangle \1 \{\|y\| \leq 1\}\big) K(x, \dd y)
	\end{align*}
	for a L\'evy triplet \((b(x), a(x), K(x, \dd y))\), see \cite[Corollary 2.23]{MatterIII}. 
	The function \(q\) is called the \emph{symbol} of the family \((P_x)_{x \in \mathbb{R}^d}\). 
	Starting with a candidate \(q\) for a symbol corresponds to a MP with input data \(\Sigma \triangleq \Omega, D \triangleq C_c^\infty(\mathbb{R}^d)\) and
	\[
	\mathcal{L} f (x) \triangleq - \int e^{i \langle x, y\rangle} q(x,y) \hat{f}(y)\dd y, \quad f \in  D. 
	\]
	We refer to the second and the third chapter of \cite{MatterIII} for a survey on the approach via the symbol.

Most of the general conditions for the Feller-Dynkin property are formulated in terms of the semigroup \((T_t)_{t \geq 0}\) and therefore are often not easy to check, see, e.g., \cite[Theorem 1.10]{MatterIII} and the discussion below its proof.  
In the following section we give a criterion for the Feller-Dynkin property in terms of the existence of Lyapunov functions. 
\section{Lyapunov Criteria for the Feller-Dynkin Property}\label{sec: main}
Lyapunov-type criteria often appear in the context of explosion, recurrence and transience of a Markov process, see, e.g., \cite{khasminskii2011stochastic, pinsky1995positive}. In this section we present such criteria for the Feller-Dynkin property of \((P_x)_{x \in S}\). We start with a sufficient condition.

 \begin{theorem}\label{theo: Ly}
 	Fix \(t \in \mathbb{R}_+\) and suppose that \(T_t(C_0(S)) \subseteq C(S)\). Assume that for any compact set \(K \subseteq S\) there exists a function \(V \colon S \to \mathbb{R}_+\) with the following properties:
 	\begin{enumerate}
 		\item[\textup{(i)}] \(V \in D \cap C_0(S)\).
 		\item[\textup{(ii)}] \(\underline{V} \triangleq \min_{x \in K} V(x) > 0\).
 		\item[\textup{(iii)}] \(\mathcal{L} V \leq c V\) for a constant \(c > 0\).
 	\end{enumerate}
 	Then, \(T_t (C_0(S)) \subseteq C_0(S)\). The function \(V\) is called a \emph{Lyapunov function for \(K\)}.
 \end{theorem}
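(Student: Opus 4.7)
The plan is to show that $T_t f$ vanishes at infinity for every $f \in C_0(S)$; combined with the assumed inclusion $T_t(C_0(S)) \subseteq C(S)$, this yields the conclusion. The key estimate will control $P_x(X_t \in K)$ for a given compact $K$ in terms of $V(x)$, which itself vanishes at infinity.

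First I would turn condition (iii) into a supermartingale statement. Since $V \in D$, Definition \ref{def: MP} says that
\[
N_s \triangleq V(X_s) - V(X_0) - \int_0^s \mathcal{L} V(X_r) \, \dd r, \qquad s \in \mathbb{R}_+,
\]
is a local $P_x$-martingale. Applying integration by parts to $e^{-cs} V(X_s)$ and using $\mathcal{L} V \leq c V$ together with $V \geq 0$ gives
\[
e^{-cs} V(X_s) = V(X_0) + \int_0^s e^{-cr}\big(\mathcal{L} V(X_r) - c V(X_r)\big) \dd r + \int_0^s e^{-cr}\, \dd N_r,
\]
whose finite variation part is non-increasing. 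Hence $(e^{-cs} V(X_s))_{s \geq 0}$ is a non-negative local supermartingale, and since $V \in C_0(S)$ is bounded it is a true supermartingale. Taking expectations yields the crucial bound
\[
E_x\big[V(X_t)\big] \leq e^{ct} V(x), \qquad x \in S.
\]

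With this in hand, fix $f \in C_0(S)$, which we may assume satisfies $\|f\|_\infty \leq 1$, and fix $\epsilon > 0$. Choose a compact $K \subseteq S$ such that $|f| \leq \epsilon$ on $S \setminus K$, and let $V$ be a Lyapunov function for $K$ as provided by the hypothesis. By Markov's inequality and property (ii),
\[
P_x(X_t \in K) \leq P_x\big(V(X_t) \geq \underline{V}\big) \leq \frac{E_x[V(X_t)]}{\underline{V}} \leq \frac{e^{ct}}{\underline{V}}\, V(x).
\]
Splitting on $\{X_t \in K\}$ gives
\[
|T_t f(x)| \leq P_x(X_t \in K) + \epsilon \leq \frac{e^{ct}}{\underline{V}}\, V(x) + \epsilon.
\]
Since $V \in C_0(S)$, the set $\{x \in S : V(x) \geq \epsilon\, \underline{V} / e^{ct}\}$ is relatively compact, so outside some compact set the first term is also bounded by $\epsilon$. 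Thus $|T_t f(x)| \leq 2\epsilon$ off a compact, and since $\epsilon$ was arbitrary, $T_t f \in C_0(S)$; combining with $T_t f \in C(S)$ closes the argument.

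The one step that requires slight care is the reduction from local supermartingale to true supermartingale; this is where the hypothesis $V \in C_0(S)$ (not merely $V \in D$) plays its role, since $V$ must be bounded to guarantee $\int_0^s e^{-cr}\,\dd N_r$ is a true martingale (or, more directly, to allow Fatou's lemma on the non-negative local supermartingale). Everything else is routine, assuming the integrability condition \eqref{eq: Lf int} built into the setup.
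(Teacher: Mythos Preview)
Your proof is correct and follows essentially the same approach as the paper: reduce to showing $P_x(X_t\in K)$ vanishes at infinity, use integration by parts to turn (iii) into a non-negative supermartingale inequality $E_x[V(X_t)]\leq e^{ct}V(x)$, then apply Markov's inequality and $V\in C_0(S)$. The only cosmetic difference is that the paper works with the process $Y_s = e^{-cs}V(X_s) - \int_0^s e^{-cr}(\mathcal{L}V(X_r)-cV(X_r))\,\dd r$ as a non-negative local martingale (hence supermartingale by Fatou), whereas you treat $e^{-cs}V(X_s)$ directly as a non-negative local supermartingale; both routes give the same bound.
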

 \begin{proof}
 	We first explain that it suffices to show that 
 			for all compact sets \(K \subseteq S\) and all \(\epsilon > 0\) there exists a compact set \(O \subseteq S\) such that 
 			\[
 			P_x (X_t \in K) < \epsilon
 			\]
 			for all \(x \not \in O\). 
 		To see this, let \(f \in C_0(S)\) and \(\epsilon > 0\). By the definition of \(C_0(S)\), there exists a compact set \(K \subseteq S\) such that \[|f(x)| < \tfrac{\epsilon}{2}\] for all \(x \not \in K\). By hypothesis, there exists a compact set \(O \subseteq S\) such that 
 		\[
 		\sup_{y \in S} |f(y)|\ P_x(X_t \in K) < \tfrac{\epsilon}{2}
 		\]
 		for all \(x \not \in O\). Thus, for all \(x \not \in O\) we have 
 		\begin{align*}
 		\big| E_x\big[ f(X_t)\big] \big| &\leq E_x \big[ |f(X_t)| \big(\1 \{X_t \in K\} + \1 \{X_t \not \in K\}\big)\big] \\&\leq \sup_{y \in S} |f(y)|\ P_x(X_t \in K) + \tfrac{\epsilon}{2} \\&< \epsilon.
 		\end{align*}
 		In other words, \(T_t f \in C_0(S)\), i.e. the claim is proven.
 		
 Next, we verify that this condition holds under the hypothesis of the theorem.
 	Fix \(x \in S\) and a compact set \(K \subseteq S\). Let \(V\) be as described in the prerequisites of the theorem.
 	The following lemma is an easy consequence of the integration by parts formula. For completeness, we give a proof after the proof of Theorem \ref{theo: Ly} is complete.
 	\begin{lemma}\label{rem: spacetime}
 		Assume that \(f \in C(S)\) and \(\mathcal{L} f \in M(S)\) are such that \eqref{eq: Lf int} holds and such that the process \eqref{eq: Mf} is a local martingale and that \(c \colon \mathbb{R}_+ \to \mathbb{R}\) is an absolutely continuous function with Lebesgue density \(c'\). Then, the process
 		\begin{align}\label{eq: ibp process}
 		f(X_t) c(t) - f(X_0) c(0) - \int_0^t \big(f(X_{s}) c' (s) + c(s)\mathcal{L} f(X_{s})\big) \dd s, \quad t \in \mathbb{R}_+,
 		\end{align}
 		is a local martingale. 
 	\end{lemma}
 	Since \(V \in D\), the definition of the martingale problem and Lemma \ref{rem: spacetime} imply that the process
 	\begin{align*}
 	Y_s \triangleq V(X_s) e^{-c s} - \int_0^s e^{- cr} \left(\mathcal{L} V(X_{r}) - c V(X_{r})\right) \dd r,\quad s \in \mathbb{R}_+,
 	\end{align*}
 	is a local \(P_x\)-martingale. Using (iii), we see that \(Y_s \geq V(X_s) e^{- cs} \geq 0\) for all \(s \in \mathbb{R}_+\). Thus, since non-negative local martingales are supermartingales due to Fatou's lemma, \((Y_s)_{s \geq 0}\) is a \(P_x\)-supermartingale.
 Using Markov's inequality, we obtain that
 	\begin{align*}
 	P_x(X_t \in K) &\leq P_x(V(X_t) \geq \underline{V})
 	\\&\leq \underline{V}^{-1}E_x \big[V(X_t)\big]
 	\\&\leq e^{c t} \underline{V}^{-1}E_x\big[Y_t\big]
 	\\&\leq e^{c t} \underline{V}^{-1} E_x\big[Y_0\big]
 	\\&= e^{c t} \underline{V}^{-1} V(x).
 	\end{align*}
 	Take an \(\epsilon > 0\). 
 	Since we assume that \(V \in C_0(S)\), there exists a compact set \(O \subseteq S\) such that 
 	\[
 	V(y) <  e^{- ct} \underline{V} \epsilon
 	\]
 	for all \(y \not \in O\).
 	We conclude that 
 	\[
 	P_x(X_t \in K) \leq e^{c t} \underline{V}^{-1} V(x) < \epsilon
 	\]
 	for all \(x \not \in O\). This finishes the proof.
 \end{proof}
\noindent
\textit{Proof of Lemma \ref{rem: spacetime}:}
Denote the local martingale \eqref{eq: Mf} by \((M_t)_{t \geq 0}\). Moreover, set \[N_t \triangleq \int_0^t \mathcal{L} f(X_s)\dd s, \quad t \in \mathbb{R}_+.\]
As an absolutely continuous function, \(c\) is of finite variation over finite intervals.
Thus, integration by parts yields that 
\begin{align*}
\dd \big(M_t c(t)\big)
&= c(t) \dd M_t + \big(f(X_t) - f(X_0)\big)c'(t) \dd t - \dd \big(N_t c(t) \big) + c(t) \mathcal{L} f(X_t)\dd t.
\end{align*}
We see that the process \eqref{eq: ibp process} equals the local martingale \((\int_0^t c(s) \dd M_s)_{t \geq 0}\).
\qed\\\\
Next, we give a condition for rejecting the Feller-Dynkin property.

\begin{theorem}\label{prop:cont}
	Suppose that \(S\) is not compact and that there exist compact sets \(K, C \subset S\), a constant \(\alpha >0\) and a bounded function \(U \colon S \to \mathbb{R}_+\) with the following properties:
	\begin{enumerate}
		\item[\textup{(i)}] \(U \in D\).
		\item[\textup{(ii)}] \(\max_{y \in K} U(y) > 0\).
		\item[\textup{(iii)}] \(\inf_{y \in S \backslash C} U(y) > 0\).
		\item[\textup{(iv)}] \(\mathcal{L} U \geq \alpha U\) on \(S \backslash K\).
	\end{enumerate}
	Then, \((P_x)_{x \in S}\) cannot be Feller-Dynkin. The function \(U\) is called a \textup{Lyapunov function for the sets \(K, C\)}.
\end{theorem}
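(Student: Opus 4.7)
The plan is to extract, from the Lyapunov hypothesis, a positive lower bound on $E_x[e^{-\alpha \tau}\1\{\tau<\infty\}]$ that is uniform in $x\notin C$, where $\tau:=\inf\{t\geq 0:X_t\in K\}$, and then to contradict this bound against the Feller-Dynkin property via the $\alpha$-resolvent applied to a suitable cut-off. The most delicate step is the upgrade, in the Lyapunov argument, from a local submartingale to a true submartingale; it crucially exploits the uniform bound $U\leq \|U\|_\infty$ and must be handled through a localizing sequence and dominated convergence.

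Applying Lemma~\ref{rem: spacetime} to $U\in D$ with $c(s)=e^{-\alpha s}$ gives that
$$
M_t := e^{-\alpha t}U(X_t) - U(X_0) - \int_0^t e^{-\alpha s}\bigl(\mathcal{L}U-\alpha U\bigr)(X_s)\,ds
$$
is a local $P_x$-martingale. Condition (iv) makes the integrand nonnegative on $\{s<\tau\}$, so $Z_t := e^{-\alpha(t\wedge\tau)}U(X_{t\wedge\tau}) = U(x)+M_{t\wedge\tau}+A_{t\wedge\tau}$ with $A$ adapted and nondecreasing, which makes $Z$ a local submartingale. Since $0\leq Z_t\leq \|U\|_\infty$, dominated convergence along a localizing sequence upgrades $Z$ to a true submartingale, so $E_x[Z_t]\geq U(x)$. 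Letting $t\to\infty$, on $\{\tau<\infty\}$ one has $Z_t\to e^{-\alpha\tau}U(X_\tau)$ while on $\{\tau=\infty\}$ one has $Z_t\leq e^{-\alpha t}\|U\|_\infty\to 0$; dominated convergence therefore yields
$$
E_x\bigl[e^{-\alpha\tau}U(X_\tau)\1\{\tau<\infty\}\bigr]\geq U(x).
$$
Since paths are cadlag and $K$ is closed, $X_\tau\in K$ on $\{\tau<\infty\}$, hence $U(X_\tau)\leq \max_K U$; combining with (ii) (so $\max_K U>0$) and (iii) (so $U(x)\geq u:=\inf_{S\setminus C}U>0$ for $x\notin C$), one arrives at
$$
E_x[e^{-\alpha\tau}\1\{\tau<\infty\}]\geq u/\max_K U =: \delta >0 \quad\text{for all } x\notin C.
$$

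For the contradiction, suppose $(P_x)_{x\in S}$ is Feller-Dynkin. Choose $f\in C_c(S)$ with $0\leq f\leq 1$ and $f\equiv 1$ on $K$, and set $V:=R_\alpha f$, i.e.\ $V(x)=E_x\bigl[\int_0^\infty e^{-\alpha t}f(X_t)\,dt\bigr]$. Since the Feller-Dynkin resolvent maps $C_0$ into $C_0$, $V\in C_0(S)$. Strong continuity of the Feller-Dynkin semigroup at $0$ implies $\|T_s f-f\|_\infty\to 0$, so there is $s_0>0$ with $T_sf\geq 1/2$ on $K$ for $s\in[0,s_0]$; integrating gives $\gamma:=\min_{y\in K}V(y)\geq (1-e^{-\alpha s_0})/(2\alpha)>0$. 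Feller-Dynkin implies the strong Markov property, and applying strong Markov at $\tau$ produces
$$
V(x)\geq E_x\bigl[\1\{\tau<\infty\}e^{-\alpha\tau}V(X_\tau)\bigr]\geq \gamma\, E_x\bigl[\1\{\tau<\infty\}e^{-\alpha\tau}\bigr]\geq \gamma\delta>0
$$
for every $x\notin C$. Because $S$ is not compact, the complement $S\setminus C$ contains sequences escaping every compact set, and this lower bound contradicts $V\in C_0(S)$. The remaining ingredients---the mapping $R_\alpha:C_0\to C_0$, strong continuity at zero, strong Markov, and $X_\tau\in K$ for cadlag paths and closed $K$---are all standard facts about Feller-Dynkin processes.
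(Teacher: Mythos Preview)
Your proof is correct and follows essentially the same two-step strategy as the paper: derive a uniform lower bound on $E_x[e^{-\alpha\tau}\1\{\tau<\infty\}]$ for $x\notin C$ from the Lyapunov submartingale built out of $U$, and contradict it against the fact that $V=R_\alpha f\in C_0(S)$ for a cut-off $f$. The only notable variation is that the paper extracts the upper bound $V(x)\geq \min_K V\cdot E_x[e^{-\alpha\tau}]$ by showing $V\in\mathcal D(L)$ with $LV\leq \alpha V$ (its Proposition~\ref{theo: conv}) and then running a supermartingale/optional-stopping argument, whereas you obtain the same inequality more directly from the resolvent representation and the strong Markov property; your verification of $\min_K V>0$ via strong continuity is also a bit more streamlined than the paper's contradiction argument.
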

\begin{proof}
	For contradiction, assume that \((P_x)_{x \in S}\) is Feller-Dynkin. For a moment we fix \(x \in S\).
	Let \((\mathcal{F}^x_t)_{t \geq 0}\) be the \(P_x\)-completion of \((\mathcal{F}_t)_{t \geq 0}\), i.e. 
	\begin{align}\label{eq: comp}
	\mathcal{F}^x_t \triangleq \sigma \big(\mathcal{F}_t, \mathcal{N}_x\big) = \bigcap_{s > t} \sigma \big(\mathcal{F}^o_s, \mathcal{N}_x\big), 
	\end{align}
	where
	\[
	\mathcal{N}_x \triangleq \big\{F \subseteq \Omega \colon \exists G \in \mathcal{F} \text{ with } F \subseteq G, P_x (G) = 0\big\}, 
	\]
	see \cite[Lemma 6.8]{Kallenberg} for the equality in \eqref{eq: comp}.
		We set
	\begin{align}\label{eq: tau}
	\tau \triangleq \inf \big(t \in \mathbb{R}_+ \colon X_t \in K\big),
	\end{align}
	which is well-known to be an \((\mathcal{F}^x_t)_{t \geq 0}\)-stopping time, see \cite[Theorem 6.7]{Kallenberg}.
	
	\textit{Step 1:} The proof of the following observation is given after the proof of Theorem \ref{prop:cont} is complete.
	\begin{proposition}\label{theo: conv}
		Assume that \((P_x)_{x \in S}\) is Feller-Dynkin and denote its generator by \((L, \mathcal{D}(L))\) (see \eqref{eq: gen1} and \eqref{eq: gen2}). 
		For any compact set \(K\subseteq S\) and any \(\alpha > 0\) there exists a function \(V\colon S \to \mathbb{R}_+\) with the following properties:
		\begin{enumerate}
			\item[\textup{(i)}] \(V \in \mathcal{D}(L)\).
			\item[\textup{(ii)}] \(\min_{y \in K} V(y) > 0\).
			\item[\textup{(iii)}] \(LV \leq \alpha V\).
		\end{enumerate}
	\end{proposition}
	Let \(V\) be as in Proposition \ref{theo: conv}. 
	Due to Dynkin's formula (see \cite[Proposition VII.1.6]{RY}) and Lemma \ref{rem: spacetime} the process
	\[
	Z_t \triangleq e^{- \alpha t} V(X_t) + \int_0^t e^{- \alpha s} \big(\alpha V(X_s) - L V(X_s)\big) \dd s, \quad t \in \mathbb{R}_+,
	\]
	is a local \(P_x\)-martingale. 
	We stress that the conclusion of Dynkin's formula also holds for the right-continuous filtration \((\mathcal{F}_t)_{t \geq 0}\), because any (right-continuous) \((\mathcal{F}^o_t)_{t \geq 0}\)-martingale is also an \(\F\)-martingale. This follows from the downward theorem (\cite[Theorem II.51.1]{RW1}) as in the proof of \cite[Lemma II.67.10]{RW1}.
	Because \((Z_t)_{t \geq 0}\) is bounded (recall that \(\mathcal{D}(L) \subseteq C_0(S)\) and that \(L f \in C_0(S)\) for all \(f \in \mathcal{D}(L)\)), the process \((Z_t)_{t \geq 0}\) is even a true \(P_x\)-martingale. Consequently, for \(s < t\) we have \(P_x\)-a.s.
	\begin{equation}\label{eq: super}
	\begin{split}
	E_x \big[ e^{- \alpha t} V(X_t) | \mathcal{F}_s \big] &\leq E_x \big[Z_t |\mathcal{F}_s \big] - \int_0^s e^{- \alpha r} \big(\alpha V(X_r) - L V(X_r)\big) \dd r
	\\&= Z_s - \int_0^s e^{- \alpha r} \big(\alpha V(X_r) - L V(X_r)\big) \dd r = e^{- \alpha s} V(X_s),
	\end{split}
	\end{equation}
	which implies that the process \((e^{- \alpha t} V(X_t))_{t \geq 0}\) is a non-negative \(P_x\)-supermartingale, which has a terminal value due to the submartingale convergence theorem (see, e.g., \cite[Theorem 1.3.15]{KaraShre}). 
	In particular, due to \cite[Lemma 67.10]{RW1}, \((e^{- \alpha t} V(X_t))_{t \geq 0}\) is also a non-negative bounded \(P_x\)-supermartingale for the filtration \((\mathcal{F}^x_t)_{t \geq 0}\). Recalling that \(\tau\) as defined in \eqref{eq: tau} is an \((\mathcal{F}^x_t)_{t \geq 0}\)-stopping time, we deduce from the optional stopping theorem (see, e.g., \cite[Theorem 6.29]{Kallenberg}) that 
	\begin{equation}\label{eq: bound1}
	\begin{split}
	V(x) &\geq E_x \big[ e^{- \alpha \tau} V(X_\tau)\big] \\&\geq E_x \big[ e^{- \alpha \tau} V(X_\tau) \1 \{\tau < \infty\} \big] \\&\geq E_x \big[e^{- \alpha \tau} \big] \min_{y \in K} V(y).
	\end{split}
	\end{equation}
	Here, we use the fact that \(X_\tau \in K\) on \(\{\tau < \infty\}\), which follows from the right-continuity of \((X_t)_{t \geq 0}\) because \(K\) is closed.
	
	\textit{Step 2:} 
	In the following all terms such as \emph{local martingale, submartingale}, etc. refer to \((\mathcal{F}^x_t)_{t \geq 0}\) as the underlying filtration.
	Lemma \ref{rem: spacetime} and \cite[Lemma 67.10]{RW1} imply that the stopped process
	\[
	Y_t \triangleq e^{- \alpha (t \wedge \tau)} U(X_{t \wedge \tau}) + \int_0^{t \wedge \tau} e^{- \alpha s} \big(\alpha U(X_s) - \mathcal{L} U(X_s)\big) \dd s, \quad t \in \mathbb{R}_+,
	\]
	is a local \(P_x\)-martingale. Due to property (iv) of the function \(U\), we have \begin{align*}
	Y_t \leq e^{- \alpha (t \wedge \tau)} U(X_{t \wedge \tau}) \leq \textup{const.}\end{align*}
	for all \(t \in \mathbb{R}_+\). 
	We note that local martingales bounded from above are submartingales. To see this, let \((M_t)_{t \geq 0}\) be a local martingale bounded from above by a constant \(c\). Then, the process \((c - M_t)_{t \geq 0}\) is a non-negative local martingale and hence a supermartingale by Fatou's lemma. This implies that \((M_t)_{t \geq 0}\) is submartingale.
	Therefore, the process \((Y_t)_{t \geq 0}\) is a \(P_x\)-submartingale and it follows similar to \eqref{eq: super} that the stopped process \((e^{- \alpha ( t \wedge \tau) } U(X_{t \wedge \tau}))_{t \geq 0}\) is a non-negative bounded \(P_x\)-submartingale, which has a terminal value \(e^{- \alpha \tau} U(X_\tau)\) by the submartingale convergence theorem. 
	Because \(U\) is bounded, we note that on \(\{\tau = \infty\}\) up to a null set we have \(e^{- \alpha \tau} U(X_\tau) = 0\). 
	Another application of the optional stopping theorem yields that
	\begin{equation}\label{eq: bound2}
	\begin{split}
	U(x) &\leq E_x \big[ e^{- \alpha \tau} U(X_\tau)\big]
	\\&= E_x \big[ e^{- \alpha \tau} U(X_\tau)\1 \{\tau < \infty\} \big]
	\\&\leq \max_{y \in K} U(y) E_x \big[e^{- \alpha \tau} \big].
	\end{split}
	\end{equation}
	
	\textit{Step 3:}
	We deduce from \eqref{eq: bound1} and \eqref{eq: bound2} that for all \(x \not \in C\) 
	\[
	\frac{\inf_{y \in S \backslash C} U(y)}{\max_{y \in K} U(y)} \leq E_x \big[e^{- \alpha \tau}\big] \leq \frac{V(x)}{\min_{y \in K} V(y)}.
	\]
	Because \(V \in \mathcal{D}(L) \subseteq C_0(S)\), we find a compact set \(G \subset S\) such that for all \(x \not \in G\)
	\[
	V(x) \leq \frac{1}{2} \frac{\inf_{y \in S \backslash C} U(y) \min_{y \in K} V(y)}{\max_{y \in K} U(y)} > 0,
	\]
	which implies that for all \(x \not \in C \cup G \not = S\)
	\[
	0 < \frac{\inf_{y \in S \backslash C} U(y)}{\max_{y \in K} U(y)} \leq E_x \big[e^{- \alpha \tau}\big] \leq \frac{1}{2} \frac{\inf_{y \in S \backslash C} U(y)}{\max_{y \in K} U(y)}.
	\]
	This is a contradiction and the proof of Theorem \ref{prop:cont} is complete.
\end{proof}
\noindent
\textit{Proof of Proposition \ref{theo: conv}:}
		We construct \(V\) via the \(\alpha\)-potential operator of \((T_t)_{t \geq 0}\), i.e. the operator \(U_\alpha \colon C_0(S) \to C_0(S)\) defined by 
		\[
		U_\alpha f (x) \triangleq \int_0^\infty e^{- \alpha s}T_s f(x) \dd s,\quad f \in C_0(S), x \in S.
		\]
		Take a function \(f \in C_0(S)\) with \(0 \leq f \leq 1\) and \(f \equiv 1\) on \(K\). 
		Such a function exists due to Urysohn's lemma for locally compact spaces (see, e.g., \cite[Proposition 7.1.9]{cohn13}).
		We set \(V \triangleq U_\alpha f\).
		It is well-known that \(V = U_\alpha f \in \mathcal{D}(L)\) and 
		\begin{align}\label{eq: res1}
		(\alpha \1 - L) V = (\alpha \1 - L) U_\alpha f = f \geq 0,
		\end{align}
		see, e.g., \cite[Proposition 6.12]{gall2016brownian}. Thus, \(V\) has the first and the third property.
		It remains to show that \(V\) has the second property. Since \(U_\alpha\) is positivity preserving we have \(V \geq 0\).
		For contradiction, assume that \(\min_{y \in K} V(y) = 0\). Then, there exists an \(x_0 \in K\) such that \(V (x_0) = 0\) and we obtain
			\[
			L V(x_0) = \lim_{t \searrow 0} \tfrac{1}{t}\big(T_t V (x_0) - V(x_0)\big) =  \lim_{t \searrow 0} \tfrac{1}{t} E_{x_0} \big[ V(X_t)\big] \geq 0.
			\] 
			Therefore, we conclude from \eqref{eq: res1} that
		\[
		\alpha V (x_0) = f (x_0) + LV (x_0) = 1 + L V(x_0) \geq 1. 
		\]
		This is a contradiction and it follows that \(V\) has also the second property.
\qed
\begin{remark}\label{rem: eqivalence}
The arguments from the proofs of Theorems \ref{theo: Ly} and \ref{prop:cont} imply a version of \cite[Proposition 3.1]{Azencott1974} beyond a diffusion setting. More precisely, when \((P_x)_{x \in S}\) is \(C_b\)-Feller, the following are equivalent:
\begin{enumerate}
	\item[\textup{(i)}]
	\((P_x)_{x \in S}\) is Feller-Dynkin.
	\item[\textup{(ii)}]
	For all compact sets \(K \subset S\) and all constants \(\alpha > 0\) the function \(x \mapsto E_x \big[e^{- \alpha \tau}\big]\) vanishes at infinity, where \(\tau\) is defined in \eqref{eq: tau}.
	\item[\textup{(iii)}]
	For all compact sets \(K \subset S\) and all constants \(\alpha > 0\) the function \(x \mapsto P_x(\tau \leq \alpha)\) vanishes at infinity, where \(\tau\) is defined in \eqref{eq: tau}.
\end{enumerate}
The implication (i) \(\Rightarrow\) (ii) is shown in the proof of Theorem \ref{prop:cont}. The implication (ii) \(\Rightarrow\) (iii) follows from the inequality
\[
P_x\big(\tau \leq \alpha\big) \leq e^{\alpha^2} E_x \big[ e^{- \alpha \tau} \1 \{\tau \leq \alpha\}\big] \leq e^{\alpha^2} E_x \big[ e^{- \alpha \tau}\big],
\]
and the final implication (iii) \(\Rightarrow\) (i) follows from the fact that 
\[
P_x \big(X_\alpha \in K\big) \leq P_x \big(\tau \leq \alpha\big)
\]
and the argument in the proof of Theorem \ref{theo: Ly}. A version of the equivalence of (i) and (iii) is also given in \cite[Theorem 4.8]{GH17}.
\end{remark}

In some cases Theorem \ref{theo: Ly} and Proposition \ref{theo: conv} can be combined to one sufficient and necessary Lyapunov-type condition for the Feller-Dynkin property:
\begin{example}\label{ex: Q ma}
	Suppose that \(S\) is a countable discrete space and let \(Q = (q_{ij})_{i, j \in S}\) be a conservative \(Q\)-matrix, i.e. \(q_{ij} \in \mathbb{R}_+\) for all \(i \not = j\) and \[- q_{ii}= \sum_{j \not = i} q_{ij} < \infty.\] Set \(\Sigma \triangleq \Omega\), \[D \triangleq \big\{ f \in C_0(S) \colon Q f \in C_0(S)\big\},
	\]
	and \(\mathcal{L} \triangleq Q,\)
	where \(Q f\) is defined by
	\begin{align}\label{eq: Q sum}
	Q f (i) = \sum_{j \in S} q_{ij} f(j).
	\end{align}
	We stress that the r.h.s. of \eqref{eq: Q sum} converges absolutely whenever \(f \in C_0(S)\).
	If \((P_x)_{x \in S}\) is Feller-Dynkin, the corresponding generator \((L, \mathcal{D}(L))\) is given by \((\mathcal{L}, D)\), see \cite[Theorem 5]{doi:10.1112/jlms/s2-5.2.267}.
	Thus, when \((P_x)_{x \in S}\) is Markov (or, equivalently, \(C_b\)-Feller, because of the discrete topology), Theorem \ref{theo: Ly} and Proposition \ref{theo: conv} imply that the following are equivalent:
	\begin{enumerate}
		\item[\textup{(i)}] \((P_x)_{x \in S}\) is Feller-Dynkin.
		\item[\textup{(ii)}] For each \(x \in S\) there exists a function \(V \colon S \to \mathbb{R}_+\) such that \(V \in D\), \(V(x) > 0\), \(QV \leq c V\) for a constant \(c > 0\).
	\end{enumerate}
	This observation is also contained in \cite[Theorem 3.2]{LI2009653}.
\end{example}
Under reasonable assumptions on the input data, we can deduce a related equivalence for more general martingale problems.
To formulate it we need further terminology.
By an extension of the input data \((D, \mathcal{L})\) we mean a pair \((D', \mathcal{L}')\) consisting of \(D '\subseteq C(S)\) and \(\mathcal{L}' \colon D' \to M(S)\) such that \(D \subseteq D'\), \(\mathcal{L}' = \mathcal{L}\) on \(D\),
\[
\int_0^t \big|\mathcal{L}' f(X_s(\omega))\big| \dd s < \infty
\] 
for all \(t \in \mathbb{R}_+, \omega \in \Omega\) and \(f \in D'\), and such that for all \(x \in S\) the probability measure \(P_x\) solves the MP \((D', \mathcal{L}', \Sigma, x)\). 

\begin{theorem}\label{coro: 1}
	Suppose that for all \(f \in D \cap C_0(S)\) we have \(\mathcal{L} f \in C_0(S)\) and that \((P_x)_{x \in S}\) is \(C_b\)-Feller. Then, the following are equivalent: 
	\begin{enumerate}
		\item[\textup{(i)}] \((P_x)_{x \in S}\) is Feller-Dynkin.
		\item[\textup{(ii)}] The input data \((D, \mathcal{L})\) can be extended such that for any compact set \(K \subset S\) a Lyapunov function for \(K\) in the sense of Theorem \ref{theo: Ly} exists. 
	\end{enumerate}
\end{theorem}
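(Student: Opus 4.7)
My plan is to prove the two implications separately, essentially by repackaging Theorem \ref{theo: Ly} and Proposition \ref{theo: conv}.

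For the implication (ii) $\Rightarrow$ (i), let $(D', \mathcal{L}')$ be the extension supplied by (ii). By the very definition of an extension, each $P_x$ solves the MP $(D', \mathcal{L}', \Sigma, x)$. Since $(P_x)_{x \in S}$ is $C_b$-Feller, we have $T_t(C_0(S)) \subseteq T_t(C_b(S)) \subseteq C_b(S) \subseteq C(S)$ for every $t \in \mathbb{R}_+$, so the hypothesis of Theorem \ref{theo: Ly} is in force. Applying Theorem \ref{theo: Ly} to the extended data, together with the Lyapunov functions for arbitrary compact $K$ guaranteed by (ii), yields $T_t(C_0(S)) \subseteq C_0(S)$. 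Since $t$ is arbitrary, (i) follows.

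For the implication (i) $\Rightarrow$ (ii), I would construct an explicit extension built from the generator $(L, \mathcal{D}(L))$ of the Feller-Dynkin semigroup $(T_t)_{t \geq 0}$. Concretely, set
\[
D' \triangleq D \cup \mathcal{D}(L), \qquad \mathcal{L}' f \triangleq \begin{cases} \mathcal{L} f, & f \in D, \\ L f, & f \in \mathcal{D}(L) \setminus D. \end{cases}
\]
Since $\mathcal{D}(L) \subseteq C_0(S)$ we obtain $D' \subseteq C(S)$; the integrability condition \eqref{eq: Lf int} holds on $D$ by hypothesis and on $\mathcal{D}(L)$ because $Lf \in C_0(S)$ is bounded; and each $P_x$ solves the MP $(D', \mathcal{L}', \Sigma, x)$, since the required martingale property holds on $D$ by assumption and on $\mathcal{D}(L)$ by Dynkin's formula \cite[Proposition VII.1.6]{RY}. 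Hence $(D', \mathcal{L}')$ is a valid extension of the input data. For any compact $K \subset S$, I then invoke Proposition \ref{theo: conv} with $\alpha = 1$ to obtain a function $V \in \mathcal{D}(L) \subseteq D' \cap C_0(S)$ with $\min_{x \in K} V(x) > 0$ and $L V \leq V$. Reading this through $\mathcal{L}'$, the function $V$ is a Lyapunov function for $K$ with constant $c = 1$ in the sense of Theorem \ref{theo: Ly} applied to the extended data, establishing (ii).

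The argument is essentially bookkeeping once Theorem \ref{theo: Ly} and Proposition \ref{theo: conv} are in hand, so I do not anticipate a substantive obstacle. The one point worth flagging is the consistency of the case-split definition of $\mathcal{L}'$: the hypothesis $\mathcal{L} f \in C_0(S)$ for $f \in D \cap C_0(S)$ combined with the $C_b$-Feller property in fact forces $\mathcal{L} f = L f$ on $D \cap C_0(S)$ (by the identity $t^{-1}(T_t f - f)(x) = t^{-1}\int_0^t T_s \mathcal{L} f(x)\, \dd s$ and dominated convergence), so the two prescriptions for $\mathcal{L}'$ agree wherever they overlap inside $C_0(S)$, while outside $C_0(S)$ no conflict arises by construction.
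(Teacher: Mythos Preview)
Your proposal is correct and follows essentially the same route as the paper: invoke Theorem \ref{theo: Ly} for (ii) $\Rightarrow$ (i), and for (i) $\Rightarrow$ (ii) extend the input data by adjoining the generator domain $\mathcal{D}(L)$, then apply Proposition \ref{theo: conv}. The paper handles the consistency check $\mathcal{L} f = Lf$ on $D \cap \mathcal{D}(L)$ by noting that the bounded local martingale is a true martingale and citing \cite[Proposition VII.1.7]{RY}, whereas you use the semigroup identity $T_t f - f = \int_0^t T_s \mathcal{L} f\,\dd s$; these are equivalent, though your phrasing ``combined with the $C_b$-Feller property'' undersells what you are actually using---the uniform convergence $t^{-1}\int_0^t T_s \mathcal{L} f\,\dd s \to \mathcal{L} f$ relies on strong continuity of $(T_t)$ on $C_0(S)$, which comes from the Feller--Dynkin assumption (i), not merely $C_b$-Feller.
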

\begin{proof}
	The implication (ii) \(\Rightarrow\) (i) is due to Theorem \ref{theo: Ly}.
	Assume that (i) holds, let \((L, \mathcal{D}(L))\) be the generator of \((P_x)_{x \in S}\) and set \(D' \triangleq D \cup \mathcal{D}(L)\) and \[\mathcal{L}'f \triangleq \begin{cases} \mathcal{L}f,&f \in D,\\ Lf,& f \in \mathcal{D}(L).\end{cases}\]
	Of course, we have to explain that \(\mathcal{L}'\) is well-defined, i.e. that \(L f = \mathcal{L} f\) for all \(f \in D \cap \mathcal{D}(L)\). 
	Because \(\mathcal{L}f\in C_0(S)\) for any \(f \in D \cap \mathcal{D}(L)\) by assumption, the process 
	\[
	f (X_t) - f(x) - \int_0^t \mathcal{L} f(X_{s})\dd s, \quad t \in \mathbb{R}_+,
	\]
	is a \(P_x\)-martingale for all \(x \in S\), because it is a bounded (on finite time intervals) local \(P_x\)-martingale. Consequently, \cite[Proposition VII.1.7]{RY} implies \(\mathcal{L}f = Lf\). 
	Due to Dynkin's formula, \(P_x\) solves also the MP \((D', \mathcal{L}', \Sigma, x)\) for all \(x \in S\). In other words, \((D', \mathcal{L}')\) is an extension of \((D, \mathcal{L})\). Now, (ii) follows from Proposition \ref{theo: conv}.
\end{proof}
Let us comment on the prerequisites of the previous theorem.
Even if the coefficients are continuous, in the case of Example \ref{expl: 1} it is not always true that \(\mathcal{L}f \in C_0(\mathbb{R}^d)\) whenever \(f \in D \cap C_0(\mathbb{R}^d) = C^2_b(\mathbb{R}^d) \cap C_0(\mathbb{R}^d)\). However, if we could take \(D = C_c^2(\mathbb{R}^d)\) instead of \(D = C^2_b(\mathbb{R}^d)\), then \(\mathcal{L} f \in C_0(\mathbb{R}^d)\) holds for all \(f \in D = D \cap C_0(\mathbb{R}^d)\) provided the coefficients are continuous. In other words, when we could reduce the input data, we would get an equivalent characterization of the Feller-Dynkin property from Theorem \ref{coro: 1}. Next, we explain that a reduction of the input data is often possible.

A sequence \((f_n)_{n \in \mathbb{N}} \subset M(S)\) is said to converge \emph{locally bounded pointwise} to a function \(f \in M(S)\) if 
\begin{enumerate}
	\item[(i)]  \(\sup_{n \in \mathbb{N}} \sup_{y \in K}|f_n (y)| < \infty\) for all compact sets \(K \subseteq S\); \item[(ii)] \(\lim_{n \to \infty} f_n(x) = f(x)\) for all \(x \in S\). \end{enumerate}
Moreover, we say that \((f_n)_{n \in \mathbb{N}} \subset B(S)\) converges \emph{bounded pointwise} to \(f \in M(S)\) if \(f_n \to f\) as \(n \to \infty\) locally bounded pointwise and
 \(\sup_{n \in \mathbb{N}} \|f_n\|_\infty< \infty\).

For a set \(A \subseteq C(S) \times M(S)\) we denote by \(\textup{cl} (A)\) the set of all \((f, g) \in C(S) \times M(S)\) for which there exist sequences \((f_n, g_n)_{n \in \mathbb{N}} \subset A\) such that \(f_n \to f\) as \(n \to \infty\) bounded pointwise and \(g_n \to g\) as \(n \to \infty\) locally bounded pointwise.
The following proposition can be viewed as an extension of \cite[Proposition 4.3.1]{EK}, which allows a local convergence in the second variable. 
\begin{proposition}\label{prop: redc}
Let \(D_1, D_2 \subseteq C(S), \mathcal{L}_1 \colon D_1 \to M(S)\) and \(\mathcal{L}_2 \colon D_2 \to M(S)\) be such that 
\[
\int_0^t \big(\big|\mathcal{L}_1 f (X_s(\omega))\big| + \big|\mathcal{L}_2 g(X_s(\omega))\big| \big) \dd s < \infty
\]
for all \(t \in \mathbb{R}_+, \omega \in \Omega, f \in D_1\) and \(g \in D_2\).
Suppose that 
\begin{align}\label{eq: incl}
\{(f, \mathcal{L}_2 f) \colon f \in D_2\} \subseteq \textup{cl}(\{(f, \mathcal{L}_1f) \colon f \in D_1\}).
\end{align}
If \(P\) is a solution to the MP \((D_1, \mathcal{L}_1, \Sigma, \eta)\), then \(P\) is also a solution to the MP \((D_2, \mathcal{L}_2, \Sigma, \eta)\).
\end{proposition}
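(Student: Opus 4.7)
The plan is to fix $g \in D_2$ and prove that the process
\[
N_t \triangleq g(X_t) - g(X_0) - \int_0^t \mathcal{L}_2 g(X_s)\,\dd s
\]
is a local $P$-martingale; the remaining conditions $P\circ X_0^{-1}=\eta$ and $P(\Sigma)=1$ are inherited directly from $P$ being a solution to $(D_1,\mathcal{L}_1,\Sigma,\eta)$. By the assumed inclusion \eqref{eq: incl}, I fix a sequence $(f_n)_{n\in\mathbb{N}}\subseteq D_1$ with $f_n\to g$ bounded pointwise and $\mathcal{L}_1 f_n\to\mathcal{L}_2 g$ locally bounded pointwise, and denote by
\[
M^n_t \triangleq f_n(X_t) - f_n(X_0) - \int_0^t \mathcal{L}_1 f_n(X_s)\,\dd s
\]
the associated local $P$-martingales.

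Next I would localize via exit times from compacta. Since $S$ is LCCB and hence $\sigma$-compact, fix an increasing sequence of compacts $K_m\uparrow S$ and set $\tau_m\triangleq\inf\{t\geq 0\colon X_t\notin K_m\}$. Each $\tau_m$ is an $(\mathcal{F}_t)$-stopping time since $S\setminus K_m$ is open and $(\mathcal{F}_t)$ is right-continuous; and since \cadlag paths into a Polish space have relatively compact image on every compact time interval, $\tau_m\to\infty$ pointwise on $\Omega$.

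Third, I verify that $M^{n,\tau_m}$ is a \emph{true} $P$-martingale with a bound uniform in $n$. Stopping preserves the local-martingale property, and on $[0,\tau_m)$ the path lies in $K_m$; consequently, for every $T\geq 0$
\[
\sup_{t\leq T}\bigl|M^{n,\tau_m}_t\bigr|\leq 2\sup_{n}\|f_n\|_\infty+T\sup_{n}\sup_{y\in K_m}|\mathcal{L}_1 f_n(y)|<\infty,
\]
where the two suprema are finite by the bounded, respectively locally bounded, pointwise convergence. A local martingale bounded on compact time intervals by a constant is a true martingale.

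The final step is the limit $n\to\infty$. For each $\omega$ and $r\in[0,\tau_m(\omega))$ one has $X_r(\omega)\in K_m$, so $\mathcal{L}_1 f_n(X_r(\omega))\to\mathcal{L}_2 g(X_r(\omega))$ while remaining uniformly in $n$ bounded by $\sup_n\sup_{K_m}|\mathcal{L}_1 f_n|<\infty$; dominated convergence in $r$ then yields $\int_0^{t\wedge\tau_m}\mathcal{L}_1 f_n(X_r)\,\dd r\to\int_0^{t\wedge\tau_m}\mathcal{L}_2 g(X_r)\,\dd r$ $P$-a.s., while $f_n(X_{t\wedge\tau_m})\to g(X_{t\wedge\tau_m})$ is immediate from the bounded pointwise convergence of $f_n$. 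Combined with the uniform bound of the previous step, dominated convergence under conditional expectations lets me pass $E^P[M^{n,\tau_m}_t\mid\mathcal{F}_s]=M^{n,\tau_m}_s$ through to $E^P[N^{\tau_m}_t\mid\mathcal{F}_s]=N^{\tau_m}_s$, so $N^{\tau_m}$ is a $P$-martingale for every $m$; since $\tau_m\uparrow\infty$, the process $N$ is a local $P$-martingale, as required. The main obstacle is isolating a \emph{single} localizing sequence that reduces all $M^n$ simultaneously to uniformly bounded martingales, and the notion of locally bounded pointwise convergence adopted in \eqref{eq: incl} is tailored precisely to make this step go through.
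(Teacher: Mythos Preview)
Your argument is correct and follows the same strategy as the paper: localize by exit times from a compact exhaustion, use the uniform bounds built into the definitions of (locally) bounded pointwise convergence to upgrade the stopped local martingales $M^{n,\tau_m}$ to true martingales with $n$-independent bounds, and pass to the limit by dominated convergence.

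There is one small technical gap worth flagging. Your claim that $\tau_m\to\infty$ relies on every compact subset of $S$ being eventually contained in some $K_m$, but a bare increasing union of compacts $K_m\uparrow S$ does not guarantee this: take $S=\mathbb{R}$, $K_m=[-m,0]\cup[1/m,m]$, and the path $\omega(t)=t$; then $\tau_m(\omega)=0$ for every $m$. The paper avoids this by choosing an exhaustion with $K_m\subset\textup{int}(K_{m+1})$ (which exists in any LCCB space) and using the stopping times $\tau_m=\inf\{t: X_t\notin\textup{int}(K_m)\text{ or }X_{t-}\notin\textup{int}(K_m)\}$, for which $\tau_m\nearrow\infty$ follows from \cite[Problem 4.27]{EK}. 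With that adjustment your proof goes through verbatim.
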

\begin{proof}
	Due to \cite[Proposition 6.2.10]{waldmann2014topology}, there exists a sequence \((K_n)_{n \in \mathbb{N}} \subset S\) of compact sets such that \(K_n \subset \textup{int}(K_{n+1})\) and \(\bigcup_{n \in \mathbb{N}} K_n = S\). Now, define
	\begin{align}\label{eq: tau mit K}
	\tau_n \triangleq \inf\big(t \in \mathbb{R}_+ \colon X_t \not \in \textup{int}(K_n) \text{ or } X_{t-} \not \in \textup{int}(K_n)\big), \quad n \in \mathbb{N}.
	\end{align}
	It is well-known that \(\tau_n\) is a stopping time, see \cite[Proposition 2.1.5]{EK}, and that \(\tau_n \nearrow \infty\) as \(n \to \infty\), see \cite[Problem 4.27]{EK}. Take \(f \in D_2\). Due to \eqref{eq: incl} there exists a sequence \((f_n)_{n \in \mathbb{N}} \subset D_1\) such that \(f_n \to f\) as \(n \to \infty\) bounded pointwise and \(\mathcal{L}_1 f_n \to \mathcal{L}_2 f\) as \(n \to \infty\) locally bounded pointwise.
	For \(i = 1, 2\) and \(g \in D_i\) we set
	\[
	M^{g, i}_t \triangleq g(X_{t}) - g(X_0)- \int_0^{t} \mathcal{L}_ig(X_{s}) \dd s, \quad t \in \mathbb{R}_+.
	\]
	Since the class of local martingales is stable under stopping, the process \((M^{f_n, 1}_{t \wedge \tau_m})_{t \geq 0}\) is a local \(P\)-martingale. Furthermore, 
	\[
	\sup_{s \in [0, t]}\big|M^{f_n, 1}_{s \wedge \tau_m}\big| \leq 2\sup_{k \in \mathbb{N}} \|f_k\|_\infty + t \ \sup_{k\in \mathbb{N}} \sup_{y \in K_m} |\mathcal{L}_1f_k(y)| < \infty, 
	\]
	by the definition of (local) bounded pointwise convergence. Consequently, \((M^{f_n, 1}_{t \wedge \tau_m})_{t \geq 0}\) is a \(P\)-martingale by the dominated convergence theorem. 
	Since \[\sup_{s \in [0, t \wedge \tau_m)} |\mathcal{L}_1f_n(X_{s-})| \leq \sup_{k \in \mathbb{N}} \sup_{y \in K_m} |\mathcal{L}_1f_k(y)| < \infty,\]
	the dominated convergence theorem also yields that for any \(t \in \mathbb{R}_+\) we have \(\omega\)-wise \(M^{f_n, 1}_{t \wedge \tau_m} \to M^{f, 2}_{t \wedge \tau_m}\) as \(n \to \infty\).
	Thus, for all \(s < t\), applying the dominated convergence theorem a third time yields that \(M^{f, 2}_{t \wedge \tau_m}, M^{f, 2}_{s \wedge \tau_m} \in L^1(P)\) and that 
	for all \(G \in \mathcal{F}_s\) 
	\[
	E^P \big[M^{f, 2}_{t \wedge \tau_m} \1_G\big] = \lim_{n \to \infty} E^P \big[ M^{f_n,1}_{t \wedge \tau_m}\1_G\big] = \lim_{n \to \infty} E^P \big[ M^{f_n, 1}_{s \wedge \tau_m} \1_G\big] = E^P \big[M^{f, 2}_{s \wedge \tau_m}\1_G\big].
	\]
	In other words, the stopped process \((M^{f, 2}_{t \wedge \tau_m})_{t \geq 0}\) is a \(P\)-martingale. Because \(\tau_m \nearrow \infty\) as \(m \to \infty\), we conclude that \(P\) solves the MP \((D_2, \mathcal{L}_2, \Sigma, x)\). 
\end{proof}
\begin{exampleN}
	We have
	\[
	\big\{(f, \mathcal{L}f) \colon f \in C_b^2(\mathbb{R}^d)\big\} \subseteq \textup{cl}\big(\big\{(f, \mathcal{L}f)\colon f \in C^2_c(\mathbb{R}^d)\big\}\big).
	\]
	To see this, let \(g_n \in C^2_c(\mathbb{R}^d)\) be such that \(0 \leq g_n \leq 1\) and \(g_n \equiv 1\) on \(\{x \in \mathbb{R}^d \colon \|x\| \leq n\}\). For any \(f \in C_b^2(\mathbb{R}^d)\) it is easy to verify that \(f_n \triangleq f g_n \in C_c^2(\mathbb{R}^d)\), \(f_n \to f\) as \(n \to \infty\) bounded pointwise and \(\mathcal{L}f_n \to \mathcal{L}f\) as \(n \to \infty\) locally bounded pointwise. Consequently, a Borel probability measure on \(\Omega\) solves the MP \((C^2_b(\mathbb{R}^d), \mathcal{L}, \Sigma, \eta)\) if and only if it solves the MP \((C^2_c(\mathbb{R}^d), \mathcal{L}, \Sigma, \eta)\). This fact is of course well-known, see, e.g., \cite[Proposition 5.4.11]{KaraShre}. 
	In summary, if the family \((P_x)_{x \in \mathbb{R}^d}\) is unique and \(b\) and \(a\) are continuous, then \((P_x)_{x \in \mathbb{R}^d}\) is \(C_b\)-Feller (see \cite[Corollary 11.1.5]{SV}) and Theorem \ref{coro: 1} implies that the following are equivalent:
	\begin{enumerate}
		\item[\textup{(i)}]
		\((P_x)_{x \in \mathbb{R}^d}\) is Feller-Dynkin.
		\item[\textup{(ii)}]
		The input data \((C_c^2(\mathbb{R}^d), \mathcal{L})\) can be extended such that for all compact sets \(K \subset \mathbb{R}^d\) a Lyapunov function for \(K\) (in the sense of Theorem \ref{theo: Ly}) exists.
	\end{enumerate}
\end{exampleN}

The larger the set \(D\), the easier it is to find a suitable Lyapunov function and to apply Theorems \ref{theo: Ly} and \ref{prop:cont}. Thus, when we have applications in mind, we would like to choose \(D\) as large as possible. We stress that Proposition \ref{prop: redc} also works in this direction, i.e. it gives a condition such that \(D\) can be enlarged.

Proposition \ref{prop: redc} can also be used to verify the prerequisites of Proposition \ref{prop: markov} as the following example shows.
\begin{example}\label{ex: MC}
	Suppose that 
	\[
	A \triangleq \big\{ (f, \mathcal{L} f) \colon f \in D\big\} \subseteq C_0(S) \times C_0(S).
	\]
	Because \(C_0(S)\) endowed with the uniform metric is a separable metric space, the space \(A\) is a separable metric space endowed with the metric \(d\) given by
	\[
	d((f_1, g_1), (f_2, g_2)) \triangleq \|f_1  - f_2\|_\infty + \|g_1 - g_2\|_\infty, \quad (f_1, g_1), (f_2, g_2) \in A.
	\]
	Consequently, we find a countable set \(C \subseteq D\) such that for any \((f, g) \in A\) there exists a sequence \((f_n)_{n \in \mathbb{N}} \subset C\) with
	\[
	d ((f_n, \mathcal{L} f_n), (f, g)) \to 0 
	\]
	as \(n \to \infty\). Now, Proposition \ref{prop: redc} implies that a Borel probability measure on \(\Omega\) solves the MP \((D, \mathcal{L}, \Sigma, \eta)\) if and only if it solves the MP \((C, \mathcal{L}, \Sigma, \eta)\).
\end{example}

Let us summarize the observation from this section. We have seen a sufficient condition for the Feller-Dynkin property (Theorem \ref{theo: Ly}) and a sufficient condition to reject the Feller-Dynkin property (Theorem \ref{prop:cont}).
Moreover, we gave one sufficient and necessary condition under some additional assumptions (Theorem \ref{coro: 1}) and discussed its prerequisites (Proposition \ref{prop: redc}).

\section{The Feller-Dynkin Property of Switching Diffusions}\label{sec: app}
In this section we derive Khasminskii-type integral tests for the Feller-Dynkin property of diffusions with random switching. Moreover, we give an equivalent characterization for the state-independent case and present equivalent integral-type conditions for the Feller-Dynkin property for one dimensional state-independent switching diffusions.

Before we start our program, we fix some notation.
Let \(S_d\) be a countable discrete space and let \(S \triangleq \mathbb{R}^d \times S_d\)
 equipped with the product topology.
Take the following coefficients:
\begin{enumerate}
	\item[\textup{(i)}]
	\(b \colon S\to \mathbb{R}^d\) being Borel and locally bounded.
	\item[\textup{(ii)}]
	\(a \colon S \to \mathbb{S}^d\) being Borel and locally bounded.
	\item[\textup{(iii)}]
	For each \(x \in \mathbb{R}^d\), let \(Q(x) = (q_{ij}(x))_{i, j \in S_d}\) be a conservative \(Q\)-matrix (see Example \ref{ex: Q ma} for a definition), such that the map \(x \mapsto Q(x)\) is Borel.
\end{enumerate}

\subsection{Conditions for the Feller-Dynkin Property}\label{sec: FDP}
For \(i, j \in S_d\), we set
\begin{align*}
\overline{q}_{ij} \triangleq \begin{cases} \sup_{x \in \mathbb{R}^d} q_{ij}(x),& i \not = j,\\
- \sum_{k \not = i}\overline{q}_{ik},&i = j.
\end{cases}
\end{align*}
In this section, we impose the following standing assumption.
\begin{SA}
	For all \(i \in S_d\) we have \(|\overline{q}_{ii}| < \infty\) and
	\begin{align}\label{eq: assp Q}
	\sup_{j \in S_d} \sup_{x \in \mathbb{R}^d} |q_{jj}(x) - \overline{q}_{jj}| < \infty.
	\end{align}
\end{SA}
 Set \(\overline{Q}\triangleq (\overline{q}_{ij})_{i, j \in S_d}\) and note that \(\overline{Q}\) is a conservative \(Q\)-matrix. Denote 
\[C \triangleq \big\{ f \in C_0(S_d) \colon \overline{Q} f \in C_0(S_d)\big\}\]
and
\[
\Sigma_d \triangleq \big\{ \omega \colon \mathbb{R}_+ \to S_d\colon t \mapsto \omega(t) \text{ is  c\`adl\`ag}\big\}.
\]

We also impose the following standing assumption.
\begin{SA}
	 For all \(i \in S_d\) the MP \((C, \overline{Q}, \Sigma_d, i)\) has a unique solution \(P^d_i\) such that the family \((P^d_i)_{i \in S_d}\) is Feller-Dynkin. Here, the state space for the MP is assumed to be \(S_d\).
\end{SA}
If \(|S_d| < \infty\) this standing assumption holds. In the following remark we collect also some conditions when the previous standing assumption holds for the case \(|S_d| = \infty\).
 \begin{remark}\label{rem: new cond}
	\begin{enumerate}
		\item[(i)] Conditions for the existence of \((P^d_i)_{i \in S_d}\) can be found in \cite[Corollary 2.2.5, Theorem 2.2.27]{anderson2012continuous} and \cite[Theorem 16]{Mufa1986}.
		If, in addition to one of these conditions, we have
		\begin{align}\label{eq: assp Q2}
		\forall \lambda > 0, k \in S_d, \ \ \{y \in l_1 \colon y(\lambda \1 - \overline{Q}) = 0\} = \{0\} \text{ and } \overline{q}_{\cdot k} \in C_0(S_d),
		\end{align}
		then \((P^d_i)_{i \in S_d}\) is Feller-Dynkin, see \cite[Theorem 8]{doi:10.1112/jlms/s2-5.2.267}. Here, \(l_1\) denotes the space of all functions \(f \colon S_d \to \mathbb{R}\) with \(\sum_{i \in S_d} |f(i)| < \infty\).
		\item[(ii)]
		If \(\sup_{n \in S_d} |\overline{q}_{nn}| < \infty\), then \((P^d_i)_{i \in S_d}\) exists, see \cite[Corollary 2.2.5, Proposition 2.2.9]{anderson2012continuous}, and \(\{y \in l_1 \colon y(\lambda \1 - \overline{Q}) = 0\} = \{0\}\) holds for all \(\lambda > 0\), see \cite[pp. 273]{doi:10.1112/jlms/s2-5.2.267}. In this case, the second part of \eqref{eq: assp Q2} is necessary and sufficient for \((P^d_i)_{i \in S_d}\) to be Feller-Dynkin, see \cite[Theorem 9]{doi:10.1112/jlms/s2-5.2.267}.
		\item[(iii)]
		If
		\(S_d = \{0,1, 2, \dots\}\) and \(\overline{q}_{ij} = 0\) for all \(i \geq j + 2\), then \cite[Proposition 2]{LI2006461} yields that
		the following are equivalent:
		\begin{enumerate} \item[(a)]
		\(\{y \in l_1 \colon y(\lambda \1 - \overline{Q}) = 0\} = \{0\}\). 
		\item[(b)]
		\(\{y \in l_1^+ \colon y(\lambda \1 - \overline{Q}) = 0\} = \{0\}\). 
	\end{enumerate}
	Part (b) is necessary for \((P^d_i)_{i \in S_d}\) to be Feller-Dynkin, see \cite[Theorem 7]{doi:10.1112/jlms/s2-5.2.267}. Here, \(l^+_1\) denotes the set of all non-negative \(f \in l_1\).
	\end{enumerate}
\end{remark}
For reader's convenience we recall our notation: \(\Omega\) denotes the space of all \cadlag functions \(\mathbb{R}_+ \to S\) equipped with the Skorokhod topology, \(\mathcal{F}\) is the corresponding Borel \(\sigma\)-field, \((X_t)_{t \geq 0}\) is the coordinate process on \(\Omega\) and \(D\subseteq C(S)\) is a set of test functions. 

We suppose that
\begin{align*}
\big\{f, fg, g \colon f \in C_b^2(\mathbb{R}^d), g \in C\big\} \subseteq D,\end{align*}
and set
\[
\Sigma \triangleq \big\{(\omega^1, \omega^2) \in \Omega \colon \omega^1 \colon \mathbb{R}_+ \to \mathbb{R}^d \text{ is continuous}\big\}
\]
and
\begin{align}\label{gen: sd}
\mathcal{L} f (x, i) &\triangleq \mathcal{K} f (x, i) + \sum_{j \in S_d} q_{ij} (x) f(x, j),\quad (x, i) \in S,
\end{align}
where
\[
\mathcal{K} f(x, i) \triangleq \langle \nabla_x f(x, i), b(x, i)\rangle + \tfrac{1}{2} \textup{trace } (\nabla^2_x f(x, i) a(x, i)),\quad (x,i) \in S.
\]
In the proof of Lemma \ref{lem: uni1} below we explain that \(\Sigma\) is closed, which yields \(\Sigma \in \mathcal{F}\).
Reecalling the Standing Assumption in Section \ref{sec: setup}, we assume that for each \(x \in S\) there exists a solution \(P_x\) to the MP \((D, \mathcal{L}, \Sigma, x)\).

By our assumption that \((P^d_x)_{x \in S_d}\) is Feller-Dynkin, due to Proposition \ref{theo: conv} (see also Example \ref{ex: Q ma}), for any compact subset of \(S_d\) there exists a Lyapunov function (in the sense of Theorem \ref{theo: Ly}) for \((P^d_x)_{x \in S_d}\). We will combine these Lyapunov functions with Lyapunov functions for the diffusion part, which we can define under each of the following two conditions.
\begin{condition}\label{cond: suff1}
	There exist two locally H\"older continuous functions \(a_d \colon [\tfrac{1}{2}, \infty) \to (0, \infty)\) and \(b_d \colon [\tfrac{1}{2}, \infty) \to \mathbb{R}\) such that
	\begin{align*}
	\langle  x, a(x, i)x\rangle &\leq a_d \left(\tfrac{\|x\|^2}{2}\right),\\
	\textup{trace } a(x, i) + 2 \langle x, b(x, i)\rangle &\geq b_d\left(\tfrac{\|x\|^2}{2}\right) \langle x, a(x, i)x\rangle
	\end{align*}
	for all \(i \in S_d\) and \(x \in \mathbb{R}^d \colon \|x\| \geq 1\). Moreover, either
	\begin{align} \label{eq: FC1}
	p(r) \triangleq \int_1^r \exp \left(- \int_1^y b_d(z) \dd z\right) \dd y, \quad \lim_{r \to \infty} p(r) < \infty,
	\end{align}
	or \begin{align} \label{eq: FC2} 
	\lim_{r \to \infty} p(r) = \infty \text{ and } \int_1^\infty p'(y) \int_y^\infty \frac{\dd z}{a_d(z) p'(z)} \dd y = \infty.
	\end{align}
	Furthermore, we have 
	\begin{align}\label{eq: small bdd}
	\sup_{j \in S_d} \sup_{\|x\| \leq 1} \left(\|b(x, j)\| + \textup{trace } a(x, j) \right)< \infty.
	\end{align}
\end{condition}
\begin{condition}\label{cond: suff2} There exists a constant \(\beta > 0\) such that 
\begin{align*} 
\|b(x, i)\| \leq \beta (1 + \|x\|),\quad \textup{trace } a(x, i) \leq \beta (1 + \|x\|^2),
\end{align*}
for all \((x, i) \in S\). 
\end{condition}

\begin{proposition}\label{prop: sd}
If the family \((P_x)_{x \in S}\) is \(C_b\)-Feller and one of the Conditions \ref{cond: suff1} and \ref{cond: suff2} holds, then \((P_x)_{x \in S}\) is also Feller-Dynkin.
\end{proposition}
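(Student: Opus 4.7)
The plan is to verify the Lyapunov sufficient condition of Theorem \ref{theo: Ly}. Fix a compact $K \subseteq S = \mathbb{R}^d \times S_d$; by the product topology we may enclose $K$ in a product $K_1 \times K_2$ with $K_1 \subset \mathbb{R}^d$ compact and $K_2 \subset S_d$ finite. I will build the Lyapunov function in product form $V(x, i) = V_1(x) V_2(i)$ with $V_1 \in C^2_b(\mathbb{R}^d) \cap C_0(\mathbb{R}^d)$ strictly positive and $V_2 \in C$ strictly positive on $K_2$. The structural hypothesis on $D$ then yields $V \in D$, while $V \in C_0(S)$ and $\min_K V > 0$ are clear. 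A short computation using \eqref{gen: sd} gives
\[
\mathcal{L}V(x, i) = V_2(i)\,\mathcal{K} V_1(x, i) + V_1(x) \sum_{j \in S_d} q_{ij}(x)\, V_2(j),
\]
so the task splits into controlling each summand uniformly in the other variable.

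For the discrete factor I invoke Proposition \ref{theo: conv} applied to the Feller-Dynkin chain $(P^d_i)_{i \in S_d}$, whose generator is $(\overline{Q}, C)$ by Example \ref{ex: Q ma}: this produces $V_2 \in C$ with $V_2 > 0$ on $K_2$ and $\overline{Q}V_2 \leq \alpha V_2$ for some $\alpha > 0$. Using $q_{ij}(x) \leq \overline{q}_{ij}$ for $i \neq j$, $V_2 \geq 0$, and the uniform bound $\sup_{j, x}|q_{jj}(x) - \overline{q}_{jj}| \leq M$ from \eqref{eq: assp Q}, I obtain
\[
\sum_{j \in S_d} q_{ij}(x) V_2(j) \leq \overline{Q}V_2(i) + \bigl(q_{ii}(x) - \overline{q}_{ii}\bigr) V_2(i) \leq (\alpha + M)\, V_2(i),
\]
uniformly in $x$. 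Hence it remains to produce $V_1$ with $\mathcal{K}V_1(x, i) \leq c_1 V_1(x)$ uniformly in $i$.

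Under Condition \ref{cond: suff2} I would take $V_1(x) = (1+\|x\|^2)^{-\alpha}$ for a suitable $\alpha > 0$; using the linear growth of $b$ and $\mathrm{tr}\,a$ together with $|\langle x, a(x,i)x\rangle| \leq \|x\|^2 \mathrm{tr}\,a(x,i)$, a direct calculation gives $|\mathcal{K}V_1| \leq c_1 V_1$ uniformly in $i$. Under Condition \ref{cond: suff1} I take $V_1$ radial, $V_1(x) = \phi(\|x\|^2/2)$, with $\phi \in C^2([0, \infty))$, $\phi > 0$, $\phi' \leq 0$ and $\phi \in C_0([0, \infty))$; setting $r = \|x\|^2/2$ and using $\phi' \leq 0$ together with the estimates of Condition \ref{cond: suff1} yields
\[
\mathcal{K}V_1(x,i) = \tfrac{\phi'(r)}{2}\bigl(\mathrm{tr}\,a(x,i) + 2\langle x, b(x,i)\rangle\bigr) + \tfrac{\phi''(r)}{2}\langle x, a(x,i)x\rangle \leq \tfrac{\langle x, a(x,i)x\rangle}{2}\bigl(\phi''(r) + b_d(r)\phi'(r)\bigr)
\]
on $\{\|x\| \geq 1\}$, while \eqref{eq: small bdd} and the $C^2_b$-regularity of $V_1$ handle $\{\|x\| < 1\}$. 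In case \eqref{eq: FC1} the choice $\phi(r) = p(\infty) - p(r)$ (smoothly extended for $r < \tfrac{1}{2}$) satisfies $\phi'' + b_d \phi' = 0$, is strictly positive, and vanishes at infinity. In case \eqref{eq: FC2} the function $p$ is unbounded, so the previous candidate is not in $C_0$; I would instead exploit the divergence of $\int_1^\infty p'(y)\int_y^\infty (a_d(z)\,p'(z))^{-1}\,dz\,dy$ to construct a bounded, strictly positive, decreasing $\phi \in C_0$ (built from the inner integral or a suitable transform of $p$) for which $a_d(r)\bigl(\phi''(r) + b_d(r)\phi'(r)\bigr)_+$ is controlled by a multiple of $\phi(r)$ uniformly on $[\tfrac{1}{2}, \infty)$.

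Combining these estimates yields $\mathcal{L}V \leq (c_1 + \alpha + M)\, V$, so $V$ satisfies the three hypotheses of Theorem \ref{theo: Ly}; its remaining prerequisite $T_t(C_0(S)) \subseteq C(S)$ is covered by the $C_b$-Feller assumption. I expect the main technical obstacle to be the radial construction in case \eqref{eq: FC2}: the bounded candidate from \eqref{eq: FC1} is no longer in $C_0$, and one must extract a quantitative bound from the Khasminskii-type divergence of the double integral in order to obtain a uniform bound on $\mathcal{K}V_1/V_1$ on the full range $r \geq \tfrac{1}{2}$.
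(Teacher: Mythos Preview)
Your overall architecture---product Lyapunov function, Proposition~\ref{theo: conv} for the discrete factor, the bound via \eqref{eq: assp Q} for the $Q$-part, and the reduction to a radial $V_1$---is exactly the paper's. For Condition~\ref{cond: suff2} your choice is essentially the paper's (it takes $\phi(r)=(1+2r)^{-1}$). For Condition~\ref{cond: suff1}, case~\eqref{eq: FC1}, your homogeneous choice $\phi=p(\infty)-p$ also works and is in fact a bit simpler than what the paper does.

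The genuine gap is precisely where you flag it: case~\eqref{eq: FC2}. You have no concrete $\phi$, only the hope that the divergence of the double integral can somehow be turned into a construction. The paper closes this gap by \emph{not} solving the homogeneous equation $\phi''+b_d\phi'=0$ but instead the inhomogeneous one
\[
\tfrac{1}{2}a_d\,u'' + \tfrac{1}{2}a_d b_d\,u' = u,\qquad u\bigl(\tfrac{1}{2}\bigr)=1,
\]
and citing \cite[Section~4.1, Lemma~4.2]{Azencott1974} for the fact that, under \emph{either} \eqref{eq: FC1} or \eqref{eq: FC2}, this equation admits a strictly positive decreasing $C^2$ solution on $[\tfrac{1}{2},\infty)$ with $u(\infty)=0$. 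The inhomogeneity is the whole point: it gives $\phi''+b_d\phi' = 2\phi/a_d \geq 0$, so after your displayed inequality one can continue
\[
\mathcal{K}V_1(x,i)\ \leq\ \tfrac{\langle x,a(x,i)x\rangle}{2}\bigl(\phi''+b_d\phi'\bigr)(r)\ \leq\ \tfrac{a_d(r)}{2}\cdot\frac{2\phi(r)}{a_d(r)}\ =\ \phi(r)=V_1(x),
\]
uniformly in $i$, on $\{\|x\|>1\}$. The integral tests \eqref{eq: FC1}--\eqref{eq: FC2} are precisely the Feller-type boundary classification for the scalar diffusion with coefficients $(a_d,b_d)$ that decides whether the decreasing solution of this second-order linear ODE vanishes at infinity; this is the missing idea.
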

\begin{proof}
	We assume that Condition \ref{cond: suff1} holds.
	Fix an arbitrary compact set \(K \subset S\). 
	Since the projections \(\pi_1 \colon S \to \mathbb{R}^d\) and \(\pi_2 \colon S \to S_d\) are continuous for the product topology, the sets \(\pi_1(K)\) and \(\pi_2(K)\) are compact and \(K \subseteq \pi_1(K) \times \pi_2(K)\). 
	
	Because we assume the family \((P^d_x)_{x \in S_d}\) to be Feller-Dynkin,
	Proposition \ref{theo: conv} (see also Example \ref{ex: Q ma}) implies that there exists a function \(\zeta \colon S_d \to \mathbb{R}_+\) such that \(\zeta \in C, \zeta > 0\) on \(\pi_2(K)\) and \(\overline{Q} \zeta \leq c \zeta\) for a constant \(c > 0\).
	Applying the change of variable as explained in \cite[Section 4.1]{Azencott1974} together with \cite[Lemma 4.2]{Azencott1974}, we obtain that there exists a twice continuously differentiable decreasing solution \(u \colon [\frac{1}{2}, \infty) \to (0, \infty)\) to the differential equation
	\begin{align}\label{eq: ODE2}
	\tfrac{1}{2} a_d b_d u' + \tfrac{1}{2} a_d u'' = u, \quad u\big(\tfrac{1}{2}\big) = 1,
	\end{align}
	which satisfies \(\lim_{x \nearrow + \infty} u(x) = 0\). For the last property we require that either \eqref{eq: FC1} or \eqref{eq: FC2} holds. 
	We find a twice continuously differentiable function \(\phi \colon [0, \infty) \to (0, \infty)\) such that \(\phi \geq 1\) on \([0, \tfrac{1}{2}]\) and \(\phi = u\) on \((\tfrac{1}{2}, \infty)\).
		Now, we define 
	\[
	V(x, i) \triangleq \phi \left(\tfrac{\|x\|^2}{2}\right) \zeta (i),\quad (x, i) \in S.
	\]
	We see that \(V\geq 0\), \(V \in D\) and that \(V > 0\) on \(K\)
	and one readily checks that \(V \in C_0(S)\).
	It remains to show that \(\mathcal{L} V \leq \textup{const. } V\).
	For all \(i \in S_d\) and \(x \in \mathbb{R}^d \colon \|x\| > 1\) we have
	\begin{align*}
	\mathcal{K} V (x, i)
	&= \zeta (i) \tfrac{1}{2} \Big(\langle x,a(x, i) x\rangle u'' \left(\tfrac{\|x\|^2}{2}\right)+ \left(\textup{trace } a(x, i) + 2 \langle x, b(x, i)\rangle\right) u' \left(\tfrac{\|x\|^2}{2}\right) \Big)  
	\\&\leq \zeta (i) \tfrac{\langle x,a(x, i) x\rangle}{2}\left(u''\left(\tfrac{\|x\|^2}{2}\right) + b_d \left(\tfrac{\|x\|^2}{2}\right) u' \left(\tfrac{\|x\|^2}{2}\right)\right),
	\end{align*}
	where we used that \(u\) is decreasing, i.e. that \(u' \leq 0\). 
	Due to \eqref{eq: ODE2}, we have
	\[
	u'' + b_d u' = \tfrac{2 u}{a_d} \geq 0.
	\]
	Thus, we obtain
	\begin{align}\label{eq: L bdd}
	\mathcal{K} V(x, i) &\leq \zeta(i) \tfrac{1}{2} a_d \left(\tfrac{\|x\|^2}{2}\right) \left(u''\left(\tfrac{\|x\|^2}{2}\right) + b_d \left(\tfrac{\|x\|^2}{2}\right) u' \left(\tfrac{\|x\|^2}{2}\right)\right) = V(x, i)
	\end{align}
	for all \(i \in S_d\) and \(x \in \mathbb{R}^d \colon \|x\| > 1\).
	Due to \eqref{eq: small bdd}, we find a constant \(c^* \geq 1\) such that \(\mathcal{K} V (x, i) \leq c^* \zeta(i) \leq c^* V(x, i)\) for all \(i \in S_d\) and \(x \in \mathbb{R}^d \colon \|x\| \leq 1\).
	In summary, using
	\eqref{eq: assp Q} and \eqref{eq: L bdd}, we obtain
	\begin{align*}
	\mathcal{L} V(x, i) &\leq c^* V(x, i) + \bigg(\sum_{j \not = i} q_{ij} (x) \zeta (j) + q_{ii}(x) \zeta(i)\bigg) \phi \left(\tfrac{\|x\|^2}{2}\right) 
	\\&\leq c^* V(x, i) + \bigg(\sum_{j \in S_d} \overline{q}_{ij} \zeta (j) + (q_{ii}(x) - \overline{q}_{ii}) \zeta(i)\bigg)\phi \left(\tfrac{\|x\|^2}{2}\right) 
	\\&	\leq  \bigg( c^*+ c + \sup_{j \in S_d} \sup_{y \in \mathbb{R}^d}|q_{jj} (y) - \overline{q}_{jj}| \bigg) V(x, i)
	= \text{const. } V(x, i).
	\end{align*}
	Consequently, Theorem \ref{theo: Ly} implies the claim.
	
	For the case where Condition \ref{cond: suff2} holds, we only have to replace \(\phi(x)\) by \((1 + 2x)^{-1}\). The remaining argument stays unchanged. We omit the details.
\end{proof}
Conditions for the \(C_b\)-Feller property of \((P_x)_{x \in S}\) can be found in \cite{doi:10.1137/16M1059357, doi:10.1137/15M1013584, doi:10.1137/16M1087837,yin2009hybrid}.
We collect some of these in the following corollary, where we also assume that 
\begin{align*}
D \equiv  \big\{f \colon S \to \mathbb{R} \colon &x \mapsto f(x, j) \in C^2_b(\mathbb{R}^d), i \mapsto f(y, i) \in B(S_d) 
\text{ for all } (y, j) \in S \big\}.
\end{align*}
\begin{corollary}\label{coro: sd}
Suppose the following:
\begin{enumerate}
	\item[\textup{(i)}] \(S_d = \{0, 1, \dots, N\}\) for \(1 \leq N \leq \infty\), where we mean \(S_d = \mathbb{N}_0\) when \(N = \infty\).
	\item[\textup{(ii)}] There exists a constant \(c_1 > 0\) such that for all \((x, i) \in S\) we have \(q_{ij} (x) = 0\) for all \(j \in S_d\) with \(|j - i| > c_1\).
		\item[\textup{(iii)}] There exits a constant \(c_2 > 0\) such that for all \(i \in S_d\)
	\[
	\sup_{x \in \mathbb{R}_d} |q_{ii}(x)| \leq c_2 (i + 1).
	\] 
	\item[\textup{(iv)}] There exists a constant \(c_3 > 0\) such that for all \(i \in S_d\) and \(x, y \in \mathbb{R}^d\)
	\[
\sum_{j \not= i}	|q_{ij}(x) - q_{ij}(y)| \leq c_3 \|x - y\|.
	\]
	\item[\textup{(v)}] Condition \ref{cond: suff2} holds and there exists a constant \(c_4 > 0\) and a root \(a^\frac{1}{2}\) of \(a\) such that for all \(i \in S_d\) and \(x, y \in \mathbb{R}^d\)
	\[
	\|b(x, i) - b(x, i)\| + \|a^\frac{1}{2}(x, i) - a^\frac{1}{2} (y, i)\| \leq c_4 \|x - y\|.
	\]
\end{enumerate}
Then, a Feller-Dynkin family \((P_x)_{x \in S}\) exists.
\end{corollary}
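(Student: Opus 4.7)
The plan is to obtain the family $(P_x)_{x \in S}$, verify the $C_b$-Feller property for it, and then invoke Proposition \ref{prop: sd} to upgrade $C_b$-Feller to Feller-Dynkin.

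First, I would dispose of existence. The global Lipschitz and linear growth bounds on $b$ and $a^{1/2}$ in (v), together with the bounded-range property (ii), the linear bound (iii) on the diagonal entries of $Q$, and the Lipschitz estimate (iv) on the off-diagonal entries, fit into the Skorokhod-type existence scheme developed in Appendix \ref{appendix}. This yields a probability measure $P_x$ solving the MP $(D, \mathcal{L}, \Sigma, x)$ for every $x \in S$, which is consistent with the Standing Assumption from Section \ref{sec: setup}.

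Second, I would establish the $C_b$-Feller property. The Lipschitz hypothesis on $(b, a^{1/2})$ in (v) gives pathwise uniqueness of the SDE in each fixed environment $k \in S_d$. Combined with (ii)--(iv), a coupling argument of the type used in \cite{yin2009hybrid, doi:10.1137/15M1013584, doi:10.1137/16M1087837, doi:10.1137/16M1059357}---driving two copies of $(Y, Z)$ by the same Brownian motion and the same Poisson random measure generating the $Q(Y_t)$-chain---produces pathwise uniqueness for the coupled switching diffusion and continuous dependence of the law on the initial condition. As recalled in Section \ref{sec: 2.2}, the continuity of $x \mapsto P_x$ implies the $C_b$-Feller property.

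Third, Condition \ref{cond: suff2} is stated verbatim as part of (v). Hence Proposition \ref{prop: sd} applies to the family $(P_x)_{x \in S}$ constructed above and delivers the Feller-Dynkin property, completing the proof.

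The main obstacle I anticipate is the second step: the $C_b$-Feller property is not automatic for a coupled switching diffusion, because one has to control both the diffusion component and the jump rates of the $Q(Y_t)$-chain simultaneously, and the dependence of $Q$ on the continuous variable $Y$ interacts non-trivially with the SDE. The cleanest route seems to be to cite the stability results from the switching-diffusion literature listed above, which are designed precisely to handle the interplay between assumptions of type (iv) and (v), rather than to reprove the coupling estimate from scratch.
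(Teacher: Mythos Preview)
Your three-step plan (existence $\Rightarrow$ $C_b$-Feller $\Rightarrow$ Proposition \ref{prop: sd}) matches the paper's proof exactly, and your third step is identical to the paper's. There is, however, a genuine gap in your first step.

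The existence result of Appendix \ref{appendix} (Theorem \ref{theo: existence}) is formulated \emph{only} for the state-independent setting of Section \ref{sec: sic}, i.e.\ for $Q(x)\equiv Q$. Corollary \ref{coro: sd}, by contrast, sits in Section \ref{sec: FDP} and its hypotheses (ii)--(iv) explicitly allow state-dependent rates $q_{ij}(x)$. So Appendix \ref{appendix} does not apply as stated, and the approximation scheme used there (mollify $b$ and $a$, then remove a spatial cutoff) says nothing about how to handle the $x$-dependence of $Q$. The paper sidesteps this by citing \cite[Theorem 2.1]{doi:10.1137/16M1087837} directly for existence; hypotheses (i)--(v) are precisely the assumptions of that theorem.

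For the $C_b$-Feller step you are on the right track: the paper likewise cites the switching-diffusion literature, specifically \cite[Theorem 3.3]{doi:10.1137/16M1087837}, rather than proving a coupling estimate from scratch. Your list of references contains this one, so the only correction needed is to be specific about which result you are invoking and to note that it already covers the state-dependent case under (ii)--(v).
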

\begin{proof}
	The existence of a family \((P_x)_{x \in S}\) follows from  \cite[Theorem 2.1]{doi:10.1137/16M1087837}. Furthermore, \cite[Theorem 3.3]{doi:10.1137/16M1087837} yields that \((P_x)_{x \in S}\) is \(C_b\)-Feller. Thus, Proposition \ref{prop: sd} implies that \((P_x)_{x \in S}\) is Feller-Dynkin, too.
\end{proof}
\begin{remark}\label{rem: FnSM}
	\begin{enumerate}
		\item[\textup{(i)}]
		Assumption (ii) in Corollary \ref{coro: sd} can be replaced by a weaker, but less explicit, condition of Lyapunov-type, see \cite[Assumption 1.2]{doi:10.1137/16M1087837}.		
		\item[\textup{(ii)}] In general, the conditions from Corollary \ref{coro: sd} do not imply the strong Feller property 
		of \((P_x)_{x \in S}\). For example, it is allowed to take the first coordinate as linear motion, which gives a process without the strong Feller property.
		
		If, in addition to (i) -- (v) in Corollary \ref{coro: sd}, we assume that there exists a constant \(c > 0\) such that for all \((x, i) \in S\) and \(y \in \mathbb{R}^d\)
			\[
			\langle y, a(x, i) y\rangle \geq c \|y\|^2,
			\]
		then \cite[Theorem 3.1]{doi:10.1137/15M1013584} implies that \((P_x)_{x \in S}\) has the strong Feller property, too. In this case, \((P_x)_{x \in S}\) has the \(C_b\)-Feller, the strong Feller and the Feller-Dynkin property. 	
	\end{enumerate}
\end{remark}
The following example illustrates that our results include cases where \(Q\) is unbounded.
\begin{example}\label{ex: unbounded}
Suppose that \(\overline{Q}\) corresponds to a classical birth-death chain, i.e. \(S_d \triangleq \{0, 1,2, \dots\}\) and
\[
\overline{q}_{ij} \triangleq \begin{cases} \lambda_i,& j = i + 1, i \geq 0,\\
\mu_i,&j = i-1, i \geq 1,\\
- (\lambda_i + \mu_i),&i = j, i \geq 0,\\
0,&\text{otherwise}, 
\end{cases}
\]
for strictly positive sequences \((\lambda_n)_{n \in \mathbb{N}}\) and \((\rho_n)_{n \in \mathbb{N}}\) and \(\mu_0= 0\) and \(\lambda_0 > 0\).
Set
\begin{align*}
r &\triangleq \sum_{n = 1}^\infty \left(\frac{1}{\lambda_n} + \frac{\mu_n}{\lambda_n \lambda_{n-1}} + \frac{\mu_n \mu_{n-1}}{\lambda_n \lambda_{n-1} \lambda_{n-2}} + \cdots + \frac{\mu_n \cdots \mu_2}{\lambda_n \cdots \lambda_2 \lambda_1}\right),\\
s &\triangleq \sum_{n = 1}^\infty \frac{1}{\mu_{n+1}} \left(1 + \frac{\lambda_n}{\mu_n} + \frac{\lambda_n \lambda_{n-1}}{\mu_n \mu_{n-1}} + \cdots + \frac{\lambda_n \lambda_{n-1} \cdots \lambda_2 \lambda_1}{\mu_n \mu_{n-1} \cdots \mu_2 \mu_1}\right).
\end{align*}
If \(r = s = \infty\) it is well-known that a Feller-Dynkin family \((P^d_i)_{i \in S_d}\) exists, see \cite[Theorems 3.2.2, 3.2.3]{anderson2012continuous} and Remark \ref{rem: new cond} (i) and (iii). In this case, if also one of the Conditions \ref{cond: suff1} and \ref{cond: suff2} holds, the family \((P_x)_{x \in S}\) is Feller-Dynkin whenever it is \(C_b\)-Feller. To be more concrete, if we choose 
\[
\lambda_n \triangleq n^\alpha \lambda,\quad \mu_n \triangleq n^\alpha \mu,\quad \alpha \geq 0, \lambda, \mu > 0, 
\]
then \(s = r =\infty\) if and only if either \(\alpha \leq 1\) or [\(\alpha \in (1, 2]\) and \(\lambda = \mu\)]. In other words, we find coefficients \(a, b\) and \(Q\) which satisfy the conditions from Corollary \ref{coro: sd} with an unbounded \(Q\). 
\end{example}

\subsection{Conditions \emph{not} to be Feller-Dynkin}
Next, we give conditions for rejecting the Feller-Dynkin property under the following standing assumption.
\begin{SA}
	\(|S_d| < \infty\). 
\end{SA}
Let \(\Sigma\) and \(\mathcal{L}\) be as in Section \ref{sec: FDP} and define
\[
D \triangleq \big\{f, fg, g \colon f \in C^2_b(\mathbb{R}^d), g \colon S_d \to \mathbb{R}\big\}.
\]
\begin{proposition}\label{prop: finite environment NFDP}
	Assume that there exist an \(r > 0\) and two locally H\"older continuous functions \(b_d \colon [r, \infty) \to \mathbb{R}\) and \(a_d \colon [r, \infty) \to (0, \infty)\) such that for all \(i \in S_d\) and \(x \in \mathbb{R}^d \colon \|x\| \geq 2r\)
	\begin{align*}
	\langle  x, a(x, i)x\rangle &\geq a_d \left(\tfrac{\|x\|^2}{2}\right),\\
	\textup{trace } a(x, i) + 2 \langle x, b(x, i)\rangle &\leq b_d\left(\tfrac{\|x\|^2}{2}\right) \langle x, a(x, i)x\rangle,
	\end{align*}
	and 	\begin{align*} 
	p (t) \triangleq \int_{r + 1}^t\exp \left(- \int_{r+1
	}^y b_d(z) \dd z\right) \dd y \to \infty \text{ as } t \to \infty,
	\end{align*}
	and \begin{align*} 
	\int_{r + 1}^\infty p'(y) \int_y^\infty \frac{\dd z}{a_d(z) p'(z)} \dd y < \infty.
	\end{align*}
Then \((P_x)_{x \in S}\) is not Feller-Dynkin. 
\end{proposition}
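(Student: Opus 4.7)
The plan is to apply Theorem~\ref{prop:cont} with the Lyapunov function \(U(x, i) := \phi(\|x\|^2/2)\), independent of the discrete coordinate \(i \in S_d\). Since \(Q\) is conservative and \(U\) does not depend on \(i\), the off-diagonal contribution \(\sum_{j} q_{ij}(x) U(x, j)\) telescopes to zero against the diagonal term, so \(\mathcal{L}U = \mathcal{K}U\); moreover, the finiteness of \(S_d\) makes any set \(\{(x,i) \colon \|x\| \leq M\}\) compact. The task therefore reduces to constructing a \(C^2\), bounded, strictly positive, decreasing function \(\phi \colon [0, \infty) \to (0, \infty)\) that satisfies the inequality \(\phi''(t) + b_d(t) \phi'(t) \geq (2\alpha / a_d(t)) \phi(t)\) for all sufficiently large \(t\) and some \(\alpha > 0\).

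I would construct \(\phi\) as a solution of the linear ODE \(\phi'' + b_d \phi' = 2\alpha / a_d\), exploiting the identity \(p''(t) = -b_d(t) p'(t)\). Set \(J(y) := \int_y^\infty (a_d(s) p'(s))^{-1}\,ds\) and define
\[
\phi_0(t) := 1 - 2\alpha \int_{t_0}^t p'(u) J(u)\,du, \qquad t \geq t_0,
\]
for a suitably large starting point \(t_0 \geq r+1\). A direct differentiation confirms the identity and yields \(\phi_0' \leq 0\). The finiteness of the hypothesis integral, combined with the monotonicity of \(J\), guarantees that \(J(y) < \infty\) for all \(y\) beyond some point (else the outer integral would diverge) and that \(\int_{t_0}^\infty p'(u) J(u)\,du < \infty\); choosing \(\alpha > 0\) small enough then forces \(\phi_0 \in [1/2, 1]\) on \([t_0, \infty)\). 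Finally, extend \(\phi_0\) to a \(C^2\), bounded, strictly positive function \(\phi\) on all of \([0, \infty)\) in an arbitrary smooth manner on \([0, t_0]\).

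Using \(\phi' \leq 0\) and the hypothesis \(2\langle x, b(x,i)\rangle + \textup{trace } a(x,i) \leq b_d(t)\langle x, a(x,i) x\rangle\), a direct computation on \(\{\|x\| \geq 2r\}\) gives
\[
\mathcal{K}U(x, i) \geq \frac{\langle x, a(x,i) x\rangle}{2}\bigl(\phi''(t) + b_d(t) \phi'(t)\bigr) = \frac{\alpha \langle x, a(x,i) x\rangle}{a_d(t)} \geq \alpha,
\]
where \(t = \|x\|^2/2\) and the last step uses \(\langle x, a(x,i) x\rangle \geq a_d(t)\). Since \(\phi \leq 1\), this in turn yields \(\mathcal{L}U \geq \alpha U\) on the region \(\{(x,i) \colon \|x\| \geq M\}\) with \(M := \max(2r, \sqrt{2 t_0})\). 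Taking \(K = C := \{(x, i) \in S : \|x\| \leq M\}\), which is compact because \(|S_d| < \infty\), the four hypotheses of Theorem~\ref{prop:cont} are then all verified: \(U \in D\) (as a function of \(x\) alone with \(\phi(\|x\|^2/2) \in C^2_b(\mathbb{R}^d)\)), \(U > 0\) everywhere, \(U \geq 1/2\) on \(S \setminus C\), and \(\mathcal{L}U \geq \alpha U\) on \(S \setminus K\).

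The main obstacle is producing a \(\phi\) that simultaneously stays bounded away from zero (so that \(\inf_{S \setminus C} U > 0\)) and satisfies the subharmonicity inequality. The combination of \(p(\infty) = \infty\) with finiteness of the second integral is precisely the Feller "entrance at infinity" regime, and the construction above is the mirror image of Azencott's construction used in the proof of Proposition~\ref{prop: sd}: there one wants a decreasing solution vanishing at infinity, here one wants a decreasing solution that stays bounded away from zero at infinity.
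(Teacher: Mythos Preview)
Your proposal is correct and follows essentially the same strategy as the paper: construct \(U(x,i)=\phi(\|x\|^2/2)\) independent of \(i\), use the conservativeness of \(Q\) to kill the jump part, and verify the hypotheses of Theorem~\ref{prop:cont} with \(K=C=\{\|x\|\le M\}\times S_d\). The only difference is that the paper obtains \(\phi\) by citing Azencott's Lemma~4.2 to produce a decreasing solution of \(\tfrac12 a_d(u''+b_d u')=u\) with strictly positive limit at infinity, whereas you build \(\phi_0\) by the explicit double integral \(\phi_0(t)=1-2\alpha\int_{t_0}^t p'(u)J(u)\,du\), which solves the simpler equation \(\phi_0''+b_d\phi_0'=2\alpha/a_d\) and then uses \(\phi_0\le 1\) to close the inequality \(\mathcal{L}U\ge\alpha U\); this is a self-contained substitute for Azencott's lemma and in fact makes the role of the two integral hypotheses (\(p\to\infty\) forces \(J\to 0\); finiteness of \(\int p'J\) keeps \(\phi_0\) bounded below) more transparent.
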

\begin{proof}
Applying the change of variable as explained in \cite[Section 4.1]{Azencott1974} together with \cite[Lemma 4.2]{Azencott1974}, we obtain that there exists a twice continuously differentiable decreasing solution \(u \colon [r, \infty) \to (0, \infty)\) to the differential equation
\begin{align*}
\tfrac{1}{2} a_d b_d u' + \tfrac{1}{2} a_d u'' = u, \quad u(r) = 1,
\end{align*}
which satisfies \(\lim_{x \nearrow + \infty} u(x) > 0\). 
We find a twice continuously differentiable function \(\phi \colon [0, \infty) \to (0, \infty)\) such that \(\phi \geq 1\) on \([0, r]\) and \(\phi = u\) on \((r, \infty)\).
It follows similarly to the proof of Proposition \ref{prop: sd} that \[U (x, i) \triangleq \phi\left(\tfrac{\|x\|^2}{2}\right),\quad (x, i) \in S,\]
has the properties from Theorem \ref{prop:cont} for the compact sets \(C \equiv K \triangleq \{x \in \mathbb{R}^d \colon \|x\| \leq \sqrt{2 r}\} \times S_d\), which implies the claim. 
\end{proof}
\subsection{Equivalent  Characterization for the State-Independent Case}\label{sec: sic}
In this section we study the state-independent case and characterize the Feller-Dynkin property via the Feller-Dynkin property of diffusions in fixed environments.
\subsubsection{The Setup}
We impose the following:
\begin{SA} We have \(S_d = \{1, \dots, N\}\) for \(1 \leq N \leq \infty\), \(Q(x) \equiv Q\) and there exists a continuous-time Markov chain with \(Q\)-matrix \(Q\). For us a Markov chain is always non-explosive. We denote its unique law by \((P^\star_i)_{i \in S_d}\), where the subscript indicates the starting value. Furthermore, \((P_i^\star)_{i \in S_d}\) is Feller-Dynkin.
\end{SA}
From now on we fix a root \(a^\frac{1}{2}\) of \(a\). Let \(\mathcal{L}\) and \(\Sigma\) be as in Section \ref{sec: FDP} and set 
\begin{align}\label{eq: new D}
D \triangleq \big\{f, fg, g \colon f \in C^2_b(\mathbb{R}^d), g \in C \big\}, \quad C \triangleq \big\{g \in C_0(S_d) \colon Q f \in C_0(S_d)\big\}.
\end{align}
Due to \cite[Theorem 5]{doi:10.1112/jlms/s2-5.2.267}, \((Q, C)\) is the generator of \((P^\star_i)_{i \in S_d}\) and, consequently, for each \(i \in S_d\) the probability measure \(P^\star_i\) is the unique solution to the MP \((C, Q, \Sigma_d, i)\).
It seems to be known that the family \((P_x)_{x \in S}\) has a one-to-one relation to a switching diffusion defined via an SDE, see, for instance, \cite{doi:10.1080/17442508108833180} for a partial result in this direction. However, we did not find a complete reference, such that we provide a statement and a proof.
\begin{lemma}\label{lem: representation}
	Fix \(y = (x, i) \in S\). A probability measure \(P_y\) solves the MP \((D, \mathcal{L}, \Sigma, y)\) if and only if there exists a filtered probability space with right-continuous complete filtration \((\mathcal{G}_t)_{t \geq 0}\) which supports a Markov chain \((Z_t)_{t \geq 0}\) for the filtration \((\mathcal{G}_t)_{t \geq 0}\) with \(Q\)-matrix \(Q\) and initial value \(Z_0 = i\) and a continuous, \((\mathcal{G}_t)_{t \geq 0}\)-adapted process \((Y_t)_{t \geq 0}\) satisfying the SDE
\begin{align}\label{eq: SDE Y}
\dd Y_t = b(Y_t, Z_t)\dd t + a^\frac{1}{2} (Y_t, Z_t)\dd W_t, \quad Y_0 = x, 
\end{align}
where \((W_t)_{t \geq 0}\) is a Brownian motion for the filtration \((\mathcal{G}_t)_{t \geq 0}\) such that the law of \((Y_t, Z_t)_{t \geq 0}\) is given by \(P_{y}\) and the \(\sigma\)-fields \(\sigma(W_t, t \in \mathbb{R}_+)\) and \(\sigma(Z_t, t \in \mathbb{R}_+)\) are independent.
\end{lemma}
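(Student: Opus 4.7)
The plan is to prove the two implications separately, with the easier direction following directly from Itô's formula and the harder direction requiring both the construction of a driving Brownian motion and a martingale-problem argument for the claimed independence. Throughout, I test $P_y$ against the three types of generating test functions of $D$, namely $f(x,i) = g(x)$, $f(x,i) = h(i)$, and $f(x,i) = g(x) h(i)$ with $g \in C_b^2(\erd)$ and $h \in C$.

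For the direction ``SDE $\Rightarrow$ MP'', given a filtered space supporting $(Y, Z, W)$ as in the statement, Itô's formula applied to $g(Y_t)$ together with \eqref{eq: SDE Y} yields the martingale property of \eqref{eq: Mf} for $f = g$. For $f = h$, this is exactly that $(P_i^\star)$ solves $(C, Q, \Sigma_d, i)$, which persists in the filtration $(\mathcal{G}_t)$ since $Z$ is assumed to be a $(\mathcal{G}_t)$-Markov chain. For the product $f = gh$, I combine these two via integration by parts: since $(Y_t)$ is continuous and the pure-jump martingale $M^h_t = h(Z_t) - h(Z_0) - \int_0^t Q h(Z_s)\dd s$ has no common jumps with any continuous process, the covariation $[g(Y), h(Z)]$ vanishes, and the product rule recovers exactly $\mathcal{L}(gh) = h \mathcal{K} g + g Qh$ in the drift term. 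The path-space constraint $P_y(\Sigma) = 1$ is automatic from the continuity of $Y$.

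For ``MP $\Rightarrow$ SDE'', I work on $(\Omega, \mathcal{F}, P_y)$ with $(Y_t, Z_t) = (\pi_1(X_t), \pi_2(X_t))$ and take $(\mathcal{G}_t)$ to be the usual augmentation of $(\mathcal{F}_t)$ under $P_y$. Testing the MP against $f(x,i) = g(x)$ identifies $M_t = Y_t - Y_0 - \int_0^t b(Y_s, Z_s) \dd s$ as a continuous $(\mathcal{G}_t)$-local martingale with $\langle M, M\rangle_t = \int_0^t a(Y_s, Z_s) \dd s$, while testing against $f(x,i) = h(i)$ shows that $Z$ solves the $(C, Q, \Sigma_d, i)$-MP in $(\mathcal{G}_t)$, hence is a $(\mathcal{G}_t)$-Markov chain with $Q$-matrix $Q$. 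A driving Brownian motion $W$ is then constructed by a standard representation argument, enlarging the space by an independent Brownian motion in the degenerate case via a Moore--Penrose pseudo-inverse; the enlarged filtration preserves the Markov chain property of $Z$ precisely because the auxiliary noise is chosen independent of $\mathcal{F}$, and $Y$ satisfies \eqref{eq: SDE Y} by construction.

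The main obstacle is the independence claim $\sigma(W_t, t \in \mathbb{R}_+) \perp \sigma(Z_t, t \in \mathbb{R}_+)$, which I establish by a joint martingale-problem argument. For $g \in C^2_b(\erd)$ and $h \in C$, I show that
\begin{equation*}
g(W_t) h(Z_t) - g(W_0) h(Z_0) - \int_0^t \big[\tfrac{1}{2} \Delta g(W_s) h(Z_s) + g(W_s) Q h(Z_s)\big]\dd s
\end{equation*}
is a $(\mathcal{G}_t)$-local martingale, exploiting that $W$ is a $(\mathcal{G}_t)$-Brownian motion, that $M^h$ is a pure-jump $(\mathcal{G}_t)$-martingale, and that $[g(W), h(Z)] = 0$ because $g(W)$ is continuous. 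Hence $(W, Z)$ solves the joint MP on $\erd \times S_d$ with product generator $\tfrac{1}{2}\Delta + Q$. Well-posedness of this joint MP, which reduces to the well-posedness of its two component MPs, identifies its solution as the product of a standard Brownian motion and $P_i^\star$, yielding the required independence and completing the proof.
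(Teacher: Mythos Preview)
Your proposal is correct and matches the paper's argument closely in structure: integration by parts for the direction $\Leftarrow$, and for $\Rightarrow$ the representation of the continuous local-martingale part of $Y$ via a (possibly extended) Brownian motion together with the identification of $Z$ as a $(\mathcal{G}_t)$-Markov chain from the $h$-test functions. The only substantive difference is in the independence step. You show that $(W,Z)$ solves the product martingale problem with generator $\tfrac{1}{2}\Delta + Q$ and then invoke well-posedness of this product problem---reduced to the well-posedness of each factor---to conclude that the law of $(W,Z)$ is the product of Wiener measure and $P^\star_i$. The paper instead carries out this product-uniqueness argument by hand, adapting the change-of-measure device from \cite[Theorem~4.10.1]{EK}: it introduces the positive martingale $K^g_t = g(W_t)\exp\big(-\tfrac{1}{2}\int_0^t \Delta g(W_s)/g(W_s)\,\dd s\big)$ for $g \in C^2_c(\mathbb{R}^d)$ bounded away from zero, uses $[M^h,K^g]=0$ to see that $M^h K^g$ is a martingale, tilts by $K^g$ to show the law of $Z$ is unchanged, and then conditions on $Z$-cylinder events to show $W$ remains Brownian. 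Your route is more economical but treats the product-MP uniqueness as a black box; the paper's route is self-contained and exhibits explicitly why well-posedness of the two factor problems suffices. Both are valid, and the underlying idea is the same.
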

\begin{proof}
	The implication \(\Leftarrow\) is a consequence of the integration by parts formula.

It remains to show the implication \(\Rightarrow\). 
We consider the completion of the filtered probability space \((\Omega, \mathcal{F}, (\mathcal{F}_t)_{t \geq 0}, P_y)\) as underlying filtered probability space.
Denote \((X_t)_{t \geq 0} = (Y_t, Z_t)_{t \geq 0}\), where \((Y_t)_{t \geq 0}\) is \(\mathbb{R}^d\)-valued and \((Z_t)_{t \geq 0}\) is \(S_d\)-valued. In view of \cite[Remark 5.4.12]{KaraShre}, we can argue as in the proof of \cite[Proposition 5.4.6]{KaraShre} to conclude the existence of a Brownian motion \((W_t)_{t \geq 0}\) (possibly defined on a standard extension of the filtered probability space \((\Omega, \mathcal{F}, (\mathcal{F}_t)_{t \geq 0}, P_y)\), see \cite[Remark 3.4.1]{KaraShre}) such that \((Y_t)_{t \geq 0}\) satisfies the SDE \eqref{eq: SDE Y}. 
With abuse of notation, we denote the standard extension of \((\Omega, \mathcal{F}, (\mathcal{F}_t)_{t \geq 0}, P_y)\) again by \((\Omega, \mathcal{F}, (\mathcal{F}_t)_{t \geq 0}, P_y)\).
Due to \cite[Proposition 10.46]{J79} the martingale property is not affected by a standard extension.
Thus, we deduce from Examples \ref{ex: Q ma} and \ref{ex: MC}, Proposition \ref{prop: existence initial law} in Appendix \ref{app: prop exietence initial law} and \cite[Theorem 4.4.2]{EK} that \((Z_t)_{t \geq 0}\) is a Markov chain for the filtration \((\mathcal{F}_t)_{t \geq 0}\) with \(Q\)-matrix \(Q\) and \(Z_0 = i\).
It remains to explain that the \(\sigma\)-fields \(\sigma(W_t, t \in \mathbb{R}_+)\) and \(\sigma(Z_t, t \in \mathbb{R}_+)\) are independent.
We adapt an idea from \cite[Theorem 4.10.1]{EK}. For all \(f \in C\) the process
\begin{align*}
M^f_t &\triangleq f(Z_t) - f(i) - \int_0^t Q f(Z_s)\dd s, \quad t \in \mathbb{R}_+,
\end{align*}
is a \(P_y\)-martingale. For \(g \in C^2_c(\mathbb{R}^d)\) with \(\inf_{x \in \mathbb{R}^d} g(x) > 0\) set 
\begin{align*}
K^g_t &\triangleq g(W_t) \exp \bigg( - \frac{1}{2} \int_0^t \frac{\Delta g (W_s)}{g(W_s)} \dd s \bigg), \quad t \in \mathbb{R}_+,
\end{align*}
where \(\Delta\) denotes the Laplacian.
It\^o's formula yields that 
\[
\dd K^g_t =  \exp \bigg( - \frac{1}{2} \int_0^t \frac{\Delta g (W_s)}{g(W_s)} \dd s \bigg)\langle \nabla g(W_t), \dd W_t\rangle, 
\]
which implies that also \((K^g_t)_{t \geq 0}\) a \(P_y\)-martingale, because it is a bounded (on finite time intervals) local \(P_y\)-martingale.
Because \((Z_t)_{t \geq 0}\) has only finitely many jumps in a finite interval, \((M^f_t)_{t \geq 0}\) is of finite variation on finite intervals and we have \(P_y\)-a.s.
\[
[M^f, K^g]_t = 0 \text{ for all }t \in \mathbb{R}_+, 
\]
see \cite[Proposition I.4.49]{JS}. 
Here, \([\cdot, \cdot]\) denotes the quadratic variation process.
Consequently, integration by parts yields that \((M^f_t K^g_t)_{t \geq 0}\) is a local \(P_y\)-martingale and a true \(P_y\)-martingale due to its boundedness on finite time intervals. Fix an arbitrary bounded stopping time \(\psi\) and define
\[
Q (G) \triangleq \frac{E_y \big[ \1_G K^g_\psi \big]}{g(0)}, \quad G \in \mathcal{F}.
\]
Due to the optional stopping theorem, for all bounded stopping times \(\phi\) we have 
\[
E^Q \big[M^f_\phi\big] = \frac{E_y \big[ M^f_{\phi \wedge \psi} K^g_{\phi \wedge \psi}\big]}{g(0)} = 0.
\]
We conclude from \cite[Proposition II.1.4]{RY} that \((M^f_t)_{t \geq 0}\) is a \(Q\)-martingale. Consequently, in view of Example \ref{ex: Q ma}, we have 
\[
Q(\Gamma ) = P_y(\Gamma ), 
\]
where 
\[
\Gamma \triangleq \big\{Z_{t_1} \in F_1, \dots, Z_{t_n} \in F_n \big\}
\]
for arbitrary \(0 \leq t_1 < \dots < t_n < \infty\) and \(F_1, \dots, F_n \in \mathcal{B}(S_d)\). Suppose that \(P_y(\Gamma) > 0\) and set 
\[
\widehat{Q} (G) \triangleq \frac{E_y \big[ \1_G \1_\Gamma\big]}{P_y(\Gamma)},\quad G \in \mathcal{F}.
\]
We have 
\[
E^{\widehat{Q}} \big[ K^g_\psi\big] = \frac{E_y\big[K^g_{\psi} \1_\Gamma\big]}{P_y(\Gamma)} = \frac{Q(\Gamma) g(0)}{P_y(\Gamma)} = g(0).
\]
Thus, because \(\psi\) was arbitrary, we deduce from \cite[Proposition II.1.4]{RY} and \cite[Proposition 4.3.3]{EK} that \((W_t)_{t \geq 0}\) is a \(\widehat{Q}\)-Brownian motion and the uniqueness of the Wiener measure yields that 
\[
\widehat{Q}\big(W_{s_1} \in G_1, \dots, W_{s_k} \in G_k\big) = P_y\big(W_{s_1} \in G_1, \dots, W_{s_k} \in G_k\big)
\]
for arbitrary \(0 \leq s_1 < \dots < s_k < \infty\) and \(G_1, \dots, G_k \in \mathcal{B}(\mathbb{R}^d)\).
Using the definition of \(\widehat{Q}\), we conclude that 
\begin{align*}
P_y \big(Z_{t_1} \in F_1&, \dots, Z_{t_n} \in F_s, W_{s_1} \in G_1, \dots, W_{s_k} \in G_k \big) 
\\&= P_y \big(Z_{t_1} \in F_1, \dots, Z_{t_n} \in F_s\big) P_y \big(W_{s_1} \in G_1, \dots, W_{s_k} \in G_k \big),
\end{align*}
which implies the desired independence.
\end{proof}
\begin{remark}\label{rem: after lem 2}
An inspection of the proof of Lemma \ref{lem: representation} shows the following: 
\begin{enumerate}
\item[(i)] If \((Z_t)_{t \geq 0}\) is a Feller-Dynkin Markov chain and \((W_t)_{t \geq 0}\) is a Brownian motion both with deterministic initial values and for the same filtration, then the \(\sigma\)-fields \(\sigma(W_t, t \in \mathbb{R}_+)\) and \(\sigma (Z_t, t \in  \mathbb{R}_+)\) are independent. 
\item[\textup{(ii)}] 
As explained in Example \ref{ex: MC}, we find a countable set \(C^\star \subseteq C\) such that for all \(f \in C\) there exists a sequence \((f_n)_{n \in \mathbb{N}} \subset C^\star\) such that 
\[
\|f - f_n\|_\infty + \|Qf - Q f_n\|_\infty \to 0
\]
as \(n \to \infty\).
The set of solutions to the MP \((D, \mathcal{L}, \Sigma, y)\) remains unchanged if we redefine \(D\) to be the countable set 
\begin{align}\label{eq: D}
\big \{ f,  g^k_{ij}, g^k_i \colon 1 \leq i, j \leq d, k \in \mathbb{N}, f \in C^\star\big\},
\end{align}
where \(g_i^k, g^k_{ij}\) are functions in \(C^2_c(\mathbb{R}^d)\) such that \(g^k_i (x) = x_i\) and \(g^k_{ij} (x) = x_ix_j\) for all \(x \in \mathbb{R}^d \colon \|x\| \leq k\).
\end{enumerate}
\end{remark}

We set 
\[
\Sigma_c \triangleq \big\{ \omega\colon \mathbb{R}_+ \to \mathbb{R}^d \colon t \mapsto \omega(t) \text{ is continuous}\big\},
\]
and
	\begin{align}\label{eq: Ki}
\mathcal{K}^i f (x) \triangleq \langle  \nabla f (x), b(x, i)\rangle + \tfrac{1}{2} \textup{trace }(\nabla^2 f(x) a(x, i))
\end{align}
for \(f \in C_b^2(\mathbb{R}^d)\) and \((x, i) \in S\). 
We equip \(\Sigma_c\) with the local uniform topology. In this case the Borel \(\sigma\)-field is generated by the coordinate process on \(\Sigma_c\), see \cite[p. 30]{SV}.
A map \(F \colon \mathbb{R}^d \times \Sigma_c \to \Sigma_c\) is called universally adapted, if it is adapted to the filtration \((\bigcap_{\mu \in \mathcal{P}} \mathcal{G}^\mu_t)_{t \geq 0}\), where \(\mathcal{P}\) is the set of all Borel probability measures on \(\mathbb{R}^d\) and \((\mathcal{G}^\mu_t)_{t \geq 0}\) is the completion of the canonical filtration on \(\mathbb{R}^d \times \Sigma_c\) w.r.t. the product measure \(\mu \otimes \mathscr{W}\), where \(\mathscr{W}\) is the Wiener measure, see \cite[p. 346]{Kallenberg}.
\begin{definition}\label{def: stong existence}
	A family \((P^i_x)_{x \in \mathbb{R}^d}\) of solutions to the MP \((C^2_b(\mathbb{R}^d), \mathcal{K}^i, \Sigma_c)\) is said to exist strongly, if 
	a universally adapted Borel map \(F^i \colon \mathbb{R}^d \times \Sigma_c \to \Sigma_c\) exists such that
	on every filtered probability space with right-continuous complete filtration \((\mathcal{G}_t)_{t \geq 0}\), which supports a Brownian motion \(W = (W_t)_{t \geq 0}\) and an \(\mathbb{R}^d\)-valued \(\mathcal{G}_0\)-measurable random variable \(\pi\), the process \(F^i(\pi, W)\) solves the SDE
\begin{align}\label{eq: SDE some}
\dd Y^i_t = b(Y^i_t, i) \dd t + a^\frac{1}{2} (Y^i_t, i) \dd W_t, \quad Y^i_0 = \pi,
\end{align}
and every solution \((Y^i_t)_{t \geq 0}\) to \eqref{eq: SDE some} satisfies \((Y^i_t)_{t \geq 0} = F^i(\pi, W)\) up to a null set.
Here, the state space for the MP is \(\mathbb{R}^d\).
\end{definition}
\begin{remark}\label{rem:SE}
We stress that our definition of strong existence includes a version of pathwise uniqueness and that the function \(F^i\) in the previous definition is independent of the law of \(\pi\). 
A generalization of the classical Yamada-Watanabe theorem yields that \((P^i_x)_{x \in \mathbb{R}^d}\) exists strongly if and only if the SDE \eqref{eq: SDE some} satisfies weak existence and pathwise uniqueness for all degenerated initial values, see \cite[Theorem 18.14]{Kallenberg}. In the classical formulation of the Yamada-Watanabe theorem as given, for instance, in \cite{KaraShre} the function \(F^i\) depends on the law of \(\pi\). This dependence was removed in \cite{10.2307/2244838}.
\end{remark}
\subsubsection{Main Results}
Next, we state the main results for this section. The proofs can be found in the following subsections.
\begin{condition}\label{cond: abs}
	We have \(q_{ii} \not = 0\) for all \(i \in S_d\).
\end{condition}
\begin{condition}\label{cond: sec eq}  The family \((P_y)_{y \in S}\) is unique and \(C_b\)-Feller, and for all \((x, i) \in S\) the MP \(( C^2_b(\mathbb{R}^d),\mathcal{K}^i, \Sigma_c, x)\), where \(\mathcal{K}^i\) is given as in \eqref{eq: Ki}, has a unique solution \(P^i_x\). Furthermore, for all \(i \in S_d\) the family \((P^i_x)_{x \in \mathbb{R}^d}\) is \(C_b\)-Feller and exists strongly.
\end{condition}

The following observation is the main result of this section. 

\begin{theorem}\label{prop: equiv}
	Suppose that the Conditions \ref{cond: abs} and \ref{cond: sec eq} hold.
	The following are equivalent:
	\begin{enumerate}
		\item[\textup{(i)}] The family \((P_y)_{y \in S}\) is Feller-Dynkin. 
		\item[\textup{(ii)}] For all \(i \in S_d\) the family \((P^i_x)_{x \in \mathbb{R}^d}\) is Feller-Dynkin.
	\end{enumerate}
\end{theorem}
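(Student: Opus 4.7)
I would prove the theorem by splitting the two implications, both grounded in the stochastic representation of Lemma \ref{lem: representation}: every $P_{(x,i)}$ is realized as the joint law of a pair $(Y, Z)$ on a filtered probability space, where $Z$ is a continuous-time Markov chain with $Q$-matrix $Q$ and $Z_0 = i$, the process $Y$ satisfies $dY_s = b(Y_s, Z_s)\, ds + a^{1/2}(Y_s, Z_s)\, dW_s$ with $Y_0 = x$, and $W$ is a Brownian motion independent of $Z$.

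For the implication (i) $\Rightarrow$ (ii), fix $i \in S_d$ and set $A_t \triangleq \{Z_s = i \text{ for all } s \in [0,t]\} = \{\tau_i > t\}$, where $\tau_i$ denotes the first jump time of $Z$. The first step is to observe that on $A_t$ the process $Y$ restricted to $[0, t]$ solves the fixed-environment SDE \eqref{eq: SDE some}; applying strong existence/pathwise uniqueness from Condition \ref{cond: sec eq} in its stopped version (i.e., to the processes stopped at $\tau_i$) yields $Y_t = F^i(x, W)_t$ on $A_t$. Since $P(A_t) = e^{q_{ii} t}$ and $W$ is independent of $Z$, this gives the key identity
\[
E^i_x\bigl[f(Y^i_t)\bigr] = e^{-q_{ii} t}\, E_{(x,i)}\bigl[f(Y_t)\, \mathbf{1}_{A_t}\bigr], \qquad f \in C_0(\mathbb{R}^d).
\]
For non-negative $f$, using $A_t \subseteq \{Z_t = i\}$ together with the fact that $F(y, j) \triangleq f(y) \mathbf{1}_{\{j = i\}}$ lies in $C_0(S)$ (by discreteness of $S_d$), one then bounds $E^i_x[f(Y^i_t)] \leq e^{-q_{ii}t}(T_t F)(x, i)$, which vanishes as $\|x\| \to \infty$ by hypothesis (i). Decomposing $f = f^+ - f^-$ and combining with the continuity of $x \mapsto E^i_x[f(Y^i_t)]$ from the $C_b$-Feller property of $(P^i_x)_{x \in \mathbb{R}^d}$ (Condition \ref{cond: sec eq}), one concludes $T^i_t f \in C_0(\mathbb{R}^d)$.

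For the implication (ii) $\Rightarrow$ (i), it suffices by the $C_b$-Feller hypothesis to show that $T_t F$ vanishes at infinity on $S$ for every $F \in C_0(S)$. Using that any $F \in C_0(S)$ is within $\epsilon$ in sup-norm of a function supported on $\mathbb{R}^d \times F_0$ for a suitable finite $F_0 \subseteq S_d$, I would reduce to the study of the \emph{elementary} observables $(y, j) \mapsto f(y) \mathbf{1}_{\{j = j_0\}}$ with $f \in C_0(\mathbb{R}^d)$. For these, conditioning on the entire trajectory of $Z$ and exploiting the independence of $W$ and $Z$ together with the strong Markov property of each $(P^l_x)_{x \in \mathbb{R}^d}$ (which is available from the Feller-Dynkin hypothesis (ii)), one obtains, on the event that $Z$ has jumps $0 < s_1 < \cdots < s_n < t$ with successive values $k_0, \dots, k_n$,
\[
E\bigl[f(Y_t) \,\big|\, Z\bigr] = \bigl(T^{k_0}_{s_1}\, T^{k_1}_{s_2 - s_1} \cdots T^{k_n}_{t - s_n}\, f\bigr)(x),
\]
where $T^l_s$ denotes the semigroup of $(P^l_x)_{x \in \mathbb{R}^d}$.

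By hypothesis (ii) each $T^l_s$ preserves $C_0(\mathbb{R}^d)$, so the iterated expression is a $C_0(\mathbb{R}^d)$-function of $x$ bounded by $\|f\|_\infty$ for every fixed $Z$-trajectory. Dominated convergence then yields that $x \mapsto E_{(x,k)}[f(Y_t) \mathbf{1}_{\{Z_t = j_0\}}]$ vanishes at infinity in $\mathbb{R}^d$ for each fixed $k$, while the Feller-Dynkin property of the Markov chain gives $|E_{(x,k)}[f(Y_t) \mathbf{1}_{\{Z_t = j_0\}}]| \leq \|f\|_\infty\, P^\star_k(Z_t = j_0) \to 0$ as $k$ leaves every finite subset of $S_d$. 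A standard $\epsilon$-splitting (finite $F \subseteq S_d$ for the chain, and compact $K \subseteq \mathbb{R}^d$ chosen as a finite union of compacts indexed by $k \in F$ for the diffusion) then delivers vanishing at infinity on $S$, completing the reduction. I expect the main obstacle to be the localization of strong existence/pathwise uniqueness to the stochastic interval $[0, \tau_i]$ used in the identity above — Definition \ref{def: stong existence} only states these properties in global form — and this is precisely where Condition \ref{cond: abs} enters, ensuring that $\tau_i$ is a genuinely non-degenerate stopping time.
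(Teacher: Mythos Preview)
Your proof is correct and aligns with the paper's argument, though the presentation for (ii) $\Rightarrow$ (i) is organized differently. For (i) $\Rightarrow$ (ii) you argue directly what the paper proves via the contrapositive (Proposition \ref{lem: non-existence}), and both versions rest on exactly the same comparison on the event $\{\tau_i > t\}$; the localization of pathwise uniqueness you flag as the main obstacle is precisely the content of the paper's Lemma \ref{lem: loc pathwise uniqueness}.

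For (ii) $\Rightarrow$ (i), the paper opts for an explicit construction: it defines $Y^x$ by concatenating the functionals $F^k$ at the successive jump times of $Z$, verifies this has law $P_{(x,i)}$ (Lemma \ref{lem: 1}), establishes the needed independence of the shifted Brownian motion $W^n$ from $\sigma(\mathcal{G}_{\tau_n},\sigma_{n+1})$ (Lemma \ref{lem: ind}), and then shows by induction over the jump index that $\|Y^{n,x}_{\sigma_{n+1}}\|\to\infty$ and $\|Y^{n,x}_{t-\tau_n}\|\to\infty$ in probability (Lemmata \ref{lem: 2}--\ref{lem: 3}). Your iterated-semigroup identity $E[f(Y_t)\mid Z]=(T^{k_0}_{s_1}\cdots T^{k_n}_{t-s_n}f)(x)$ is the clean closed-form expression of the same construction; once it is in hand, dominated convergence replaces the paper's inductive convergence-in-probability argument. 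The price is that a rigorous derivation of the conditioning formula requires exactly the ingredients the paper isolates as Lemmata \ref{lem: loc pathwise uniqueness} and \ref{lem: ind}, so the technical work is comparable. Your final $\epsilon$-splitting over a finite $F\subseteq S_d$ and a union of compacts in $\mathbb{R}^d$ is the same as the paper's concluding step after Lemma \ref{lem: main}.
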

For the strong Feller property a related result is known, see \cite[Theorem 3.2]{doi:10.1137/15M1013584}.
One implication in the previous theorem can be generalized as the following proposition shows.
\begin{proposition}\label{lem: non-existence}
	Suppose that there exists an \(i \in S_d\) such that for all \(x \in \mathbb{R}^d\) the MP \((C^2_b(\mathbb{R}^d),\mathcal{K}^i,  \Sigma_c, x)\)
	has a (unique) solution \(P^i_x\) and that the family \((P^i_x)_{x \in \mathbb{R}^d}\) exists strongly and is \(C_b\)-Feller, but not Feller-Dynkin. Then, \((P_x)_{x \in S}\) is not Feller-Dynkin.
\end{proposition}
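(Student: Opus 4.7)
The plan is to argue by contradiction: assuming $(P_x)_{x \in S}$ is Feller-Dynkin, I will lift a witness for the failure of the Feller-Dynkin property of $(P^i_x)_{x \in \mathbb R^d}$ to a witness for the switching semigroup, by starting the chain in state $i$ and exploiting that it stays there for a positive amount of time. First, since $(P^i_x)$ is $C_b$-Feller but not Feller-Dynkin, there exist $t > 0$ and $f \in C_0(\mathbb R^d)$ such that $T^i_t f \in C_b(\mathbb R^d) \setminus C_0(\mathbb R^d)$; decomposing $f = f^+ - f^-$ and passing to a subsequence, I may assume $f \ge 0$ and obtain $\epsilon > 0$ together with $\|x_n\| \to \infty$ satisfying $T^i_t f(x_n) \ge \epsilon$. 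The candidate lift is $F(x, j) \triangleq f(x)\, \indik\{j = i\}$, which belongs to $C_0(S)$ because $\{i\}$ is clopen in the discrete space $S_d$ and $\{|F| \ge \delta\}$ is contained in the compact set $\{|f| \ge \delta\} \times \{i\}$ for every $\delta > 0$.

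Next, I would invoke Lemma \ref{lem: representation} to realize $P_{(x_n, i)}$ as the joint law of $(Y, Z)$ on a filtered probability space carrying a Brownian motion $W$ with $W \perp Z$, where $Y$ solves \eqref{eq: SDE Y} with $Y_0 = x_n$ and $Z$ is a Markov chain with $Q$-matrix $Q$ starting at $i$. Using the strong-existence hypothesis, I set $\widehat Y \triangleq F^i(x_n, W)$; this process has law $P^i_{x_n}$ and, being $\sigma(W)$-measurable, is independent of $Z$. Define $\tau \triangleq \inf\{s \ge 0 : Z_s \ne i\}$; under the law of $Z$, this is $\mathrm{Exp}(-q_{ii})$-distributed if $q_{ii} < 0$ and identically $+\infty$ if $q_{ii} = 0$, so $P^\star_i(\tau > t) > 0$ in either case.

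The heart of the argument is to identify $Y_s$ with $\widehat Y_s$ on $\{\tau > t\}$ for every $s \le t$. On the random interval $[0, \tau)$ both processes satisfy the same SDE in the fixed environment $i$, driven by the same $W$ and with the same initial value $x_n$. Stopping at $\sigma \triangleq \tau \wedge t$ exhibits $Y^\sigma$ and $\widehat Y^\sigma$ as two solutions on $[0, \infty)$ of a common SDE with coefficients localized by $\indik_{[0, \sigma]}$, and pathwise uniqueness in environment $i$ (implied by the strong-existence hypothesis, see Remark \ref{rem:SE}) combined with a standard localization argument yields $Y^\sigma = \widehat Y^\sigma$ almost surely. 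In particular, $Y_t = \widehat Y_t$ on $\{\tau > t\}$.

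Finally, using $f \ge 0$, the inclusion $\{\tau > t\} \subseteq \{Z_t = i\}$, and the independence $\widehat Y \perp Z$, one obtains
\[
T_t F(x_n, i) \ge E\bigl[f(\widehat Y_t)\, \indik\{\tau > t\}\bigr] = T^i_t f(x_n)\, P^\star_i(\tau > t) \ge \epsilon \cdot P^\star_i(\tau > t) > 0.
\]
Since $(x_n, i)$ escapes every compact subset of $S$, $T_t F$ does not vanish at infinity, contradicting the Feller-Dynkin assumption and finishing the proof. The main obstacle I expect is the coupling step in the third paragraph: converting the equality of SDE coefficients on the random interval $[0, \tau)$ into a pathwise identity of two processes driven by the same Brownian motion requires care, because pathwise uniqueness is traditionally formulated only for the unstopped SDE and needs to be localized.
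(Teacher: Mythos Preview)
Your proposal is correct and follows essentially the same route as the paper: represent $P_{(x,i)}$ via Lemma~\ref{lem: representation}, use the strong-existence map $F^i$ to couple $Y$ with a fixed-environment solution up to the first jump time $\tau$ of $Z$, then exploit independence of $W$ and $Z$ together with $P(\tau>t)=e^{q_{ii}t}>0$ to transfer the lower bound from $T^i_t$ to $T_t$. The only cosmetic difference is that the paper phrases the witness as a compact set $K^o$ and the probability $P^i_x(X_t\in K^o)$, whereas you use a nonnegative $f\in C_0(\mathbb R^d)$; these are interchangeable. The obstacle you single out---localizing pathwise uniqueness to the random interval $[0,\tau]$---is precisely what the paper isolates and proves as Lemma~\ref{lem: loc pathwise uniqueness}, so your instinct is well calibrated.
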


The next two results provide conditions implying Condition \ref{cond: sec eq}. 
\begin{proposition}\label{prop: sd feller}
	Suppose that \(b\) and \(a\) are continuous and that \((P_y)_{y \in S}\) is unique, then \((P_y)_{y \in S}\) is strongly Markov and \(C_b\)-Feller. 
\end{proposition}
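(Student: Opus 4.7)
The plan is to establish the two assertions separately: the strong Markov property via Proposition \ref{prop: markov}, and the $C_b$-Feller property via a tightness-plus-identification argument that leverages continuity of the coefficients together with the assumed uniqueness.

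For the strong Markov property, I would apply Proposition \ref{prop: markov}. The set $D$ as specified in \eqref{eq: new D} is uncountable, so I would first invoke Remark \ref{rem: after lem 2}(ii) to replace it by the countable collection \eqref{eq: D} without altering the set of solutions to the MP. It then remains to verify the hypotheses of Proposition \ref{prop: markov}: (a) this countable $D$ is contained in $C_b(S)$, since each element is the lift to $S$ of either a function in $C^\star \subseteq C_0(S_d)$ or a function in $C^2_c(\mathbb{R}^d)$; (b) $\mathcal{L}(D) \subseteq B_\textup{loc}(S)$, which follows by direct computation---for $h \in C^\star$ one has $\mathcal{L}h(x,i) = Qh(i)$ (bounded since $Qh \in C_0(S_d)$), while for $g \in C^2_c(\mathbb{R}^d)$ one has $\mathcal{L}g(x,i) = \mathcal{K}g(x,i)$, which is locally bounded by local boundedness of $b$ and $a$ and the compact support of $g$; (c) uniqueness of $(P_y)_{y \in S}$ is a standing hypothesis; and (d) $\Sigma = \theta_\xi^{-1} \Sigma$ for every stopping time $\xi$, because continuity of the first coordinate is preserved under time shifts.

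For the $C_b$-Feller property, I would use a tightness-plus-identification argument combined with uniqueness. Fix a sequence $y_n = (x_n, i_n) \to y = (x, i)$ in $S$; since $S_d$ carries the discrete topology, we may assume $i_n = i$ throughout. The $S_d$-marginal of each $P_{y_n}$ equals the fixed law $P^\star_i$, so only the diffusion component requires control. I would invoke the Skorokhod-type limit theorem for state-independent switching diffusions from Appendix \ref{appendix} (Theorem \ref{theo: limit}) to extract both tightness of $\{P_{y_n}\}$ and the fact that every weak accumulation point is a solution to the MP $(D, \mathcal{L}, \Sigma, y)$. Continuity of $b$ and $a$ enters decisively at the identification step, where it permits passing to the weak limit inside $\mathcal{L}f(X_s)$ for test functions $f \in D$. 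By the assumed uniqueness, every subsequential limit must coincide with $P_y$, so $P_{y_n} \Rightarrow P_y$, which is the continuity of $y \mapsto P_y$ and thus yields the $C_b$-Feller property by a standard argument (see, e.g., \cite[Theorem IX.4.8]{JS}).

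The main obstacle is the tightness statement underlying the second part. The hypotheses impose no growth restrictions on $b$ or $a$, so a priori the paths of the diffusion components under $P_{y_n}$ could travel arbitrarily fast as $n$ varies; establishing the required uniform control is precisely what the Skorokhod-type limit theorem of the appendix is designed to provide. The strong Markov part, by contrast, is essentially bookkeeping once the reduction to a countable $D$ via Remark \ref{rem: after lem 2}(ii) has been carried out.
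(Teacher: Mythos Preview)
Your proposal is correct and follows essentially the same route as the paper: strong Markov via Proposition~\ref{prop: markov} after the countable reduction of Remark~\ref{rem: after lem 2}(ii), and the $C_b$-Feller property via continuity of $y \mapsto P_y$ established through Theorem~\ref{theo: limit}. Two minor points: Theorem~\ref{theo: limit} sits in the main text (immediately after the proof of Proposition~\ref{prop: sd feller}) rather than in Appendix~\ref{appendix}, and the paper makes explicit the intermediate step $P_y(\Delta X_t \neq 0) = 0$ (so that $\omega \mapsto \omega(t)$ is $P_y$-a.s.\ continuous) needed to pass from weak continuity of $y \mapsto P_y$ to $T_t(C_b(S)) \subseteq C_b(S)$.
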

\begin{proposition}\label{prop: uni}
	Suppose that Condition \ref{cond: abs} holds and that for all \(i \in S_d\) the family \((P^i_x)_{x \in \mathbb{R}^d}\) exists strongly, then a unique family \((P_y)_{y \in S}\) exists.
\end{proposition}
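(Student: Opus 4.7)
The plan is to translate everything via Lemma \ref{lem: representation}: a probability $P_{(x,i)}$ solves the MP if and only if it is the joint law of a pair $(Y,Z)$, where $Z$ is a Markov chain with $Q$-matrix $Q$ starting at $i$ and independent of some driving Brownian motion $W$, and $Y$ is a continuous process satisfying the switched SDE \eqref{eq: SDE Y}. In this dictionary, existence becomes a construction problem and uniqueness reduces to showing that, given independent data $(Z,W)$, the process $Y$ is pathwise determined by $(x,Z,W)$.

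For existence, I would work on a filtered probability space supporting a Markov chain $(Z_t)_{t \geq 0}$ with $Q$-matrix $Q$ starting at $i$ (which exists and has a unique law by the Standing Assumption of Section \ref{sec: sic}) together with a $d$-dimensional Brownian motion $W$ independent of $Z$, equipped with the usual right-continuous complete augmentation $(\mathcal{G}_t)_{t \geq 0}$ of the filtration generated by $Z$ and $W$. Let $0 = \tau_0 < \tau_1 < \dots$ denote the successive jump times of $Z$; Condition \ref{cond: abs} gives $|q_{ii}|>0$ for every state, so the holding times are genuinely exponential and, combined with non-explosivity, $\tau_n \to \infty$ almost surely. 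Inductively, on each interval $[\tau_n,\tau_{n+1})$ on which $Z$ is constant equal to some $k \in S_d$, I would invoke the strong solution functional $F^k$ of Definition \ref{def: stong existence}, applied to the initial condition $Y_{\tau_n}$ and the shifted Brownian motion $(W_{t+\tau_n}-W_{\tau_n})_{t\geq 0}$, and concatenate the fragments into a continuous $(\mathcal{G}_t)$-adapted process $Y$ satisfying \eqref{eq: SDE Y}. Lemma \ref{lem: representation} then shows that the law of $(Y,Z)$ is a solution to the MP.

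For uniqueness, I would take an arbitrary solution $P_{(x,i)}$; applying Lemma \ref{lem: representation} in the reverse direction provides a filtered probability space on which the coordinate decomposition $(Y_t,Z_t)$ lives together with a Brownian motion $W$, where $Z$ is a Markov chain with $Q$-matrix $Q$ starting at $i$, $Y$ satisfies \eqref{eq: SDE Y}, and $\sigma(W_t,t\geq 0)$ is independent of $\sigma(Z_t,t\geq 0)$. Since the Standing Assumption of Section \ref{sec: sic} determines the law of $Z$ uniquely and the law of $W$ is the Wiener measure, independence upgrades these two facts to uniqueness of the joint law of $(Z,W)$. On each interval $[\tau_n,\tau_{n+1})$ the restriction of $Y$ satisfies the fixed-environment SDE \eqref{eq: SDE some} with index $Z_{\tau_n}$, so the pathwise uniqueness packaged into the strong existence hypothesis (see Remark \ref{rem:SE}) forces $Y$ to coincide almost surely with $F^{Z_{\tau_n}}$ applied to $Y_{\tau_n}$ and the shifted Brownian motion. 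Iterating starting from $Y_0=x$ realises $(Y,Z)$ as a measurable functional of $(Z,W)$, whose law is already fixed, so $P_{(x,i)}$ is uniquely determined.

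The delicate point in both halves is the gluing at the random times $\tau_n$: one has to verify that the shifted Brownian motion $W_{\cdot+\tau_n}-W_{\tau_n}$ is independent of $\mathcal{G}_{\tau_n}$ and of the further evolution of $Z$, so that the universally adapted functional $F^k$ of Definition \ref{def: stong existence} can be invoked with the correct measurability properties and the resulting concatenation remains adapted to $(\mathcal{G}_t)_{t \geq 0}$. This is a strong-Markov-type statement for the independent pair $(Z,W)$ which should go through by standard arguments, but it is the place where one has to be careful about the difference between the natural filtration of $(Z,W)$ and its universally augmented version. Once that is in hand, the rest is bookkeeping, since $Z$ contributes only finitely many jumps on any bounded interval and pathwise uniqueness inside each environment propagates trivially across the jumps.
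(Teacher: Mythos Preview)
Your proposal is correct. The existence half matches the paper exactly: the paper simply refers back to the explicit concatenation construction carried out in the proof of Theorem \ref{prop: equiv}, which is precisely your inductive gluing of the strong solution functionals $F^k$ along the jump times $\tau_n$ of $Z$.

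For uniqueness the two arguments diverge in presentation, though the core induction is the same. The paper invokes a Yamada--Watanabe type coupling (citing \cite{J80}): given two solutions $P_y$ and $Q_y$, it realises them simultaneously on one filtered space as laws of $(Y,Z)$ and $(V,Z)$ driven by the \emph{same} pair $(Z,W)$, and then shows $Y=V$ pathwise by induction over the $\tau_n$ using the local pathwise uniqueness lemma (Lemma \ref{lem: loc pathwise uniqueness}). You instead take a single solution, use Lemma \ref{lem: representation} to produce $(Y,Z,W)$ with $Z$ and $W$ independent, observe that the joint law of $(Z,W)$ is therefore the product of the uniquely determined marginals, and then argue that $Y$ is almost surely the explicit functional of $(Z,W)$ built from the $F^k$. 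Your route is arguably more direct because it bypasses the coupling construction of \cite{J80}; the price is that the ``$Y$ is a measurable functional of $(Z,W)$'' step still requires exactly the same local pathwise uniqueness statement (Lemma \ref{lem: loc pathwise uniqueness}) on each interval $[\tau_n,\tau_{n+1}]$, together with the filtration and independence bookkeeping you flag at the end. Both approaches need the strong Markov property of $(Z,W)$ at the $\tau_n$ (the paper uses it too, via Lemma \ref{lem: ind} in the existence construction), so the delicate point you identify is real but already handled in the paper's framework.
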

An existence result without uniqueness is given in Appendix \ref{appendix}. We collect some consequences of the preceding results.
\begin{corollary}\label{coro: complete}
	Suppose that \(d = 1\), that Condition \ref{cond: abs} holds and that for all \(i \in S_d\) the map \(x \mapsto b(x, i)\) is continuous and the map \(x \mapsto a^\frac{1}{2} (x, i)\) is locally H\"older continuous with exponent larger or equal than \(\tfrac{1}{2}\) and that \(a^\frac{1}{2}(\cdot, i) \not = 0\). Furthermore, for all \(i \in S_d\) suppose that
	\begin{align}\label{eq: FC}
	\lim_{x \to \pm \infty} \int_0^x \exp \bigg( - 2 \int_0^y \frac{b(z, i)}{a(z, i)} \dd z \bigg) \int_0^y \frac{2 \exp \big( 2 \int_0^u \frac{b(z, i)}{a(z, i)} \dd z \big)}{a (u, i)} \dd u \dd y = \infty.
	\end{align}
	Then, the family \((P_x)_{x \in S}\) exists uniquely, is strongly Markov and \(C_b\)-Feller. Moreover, the following are equivalent:
	\begin{enumerate}
		\item[\textup{(i)}] \((P_x)_{x \in S}\) is Feller-Dynkin.
		\item[\textup{(ii)}] For all \(i \in S_d\) one of the conditions \eqref{eq: a1} and \eqref{eq: a2} holds and one of the conditions \eqref{eq: b1} and \eqref{eq: b2} holds:
		\begin{align}\label{eq: a1}\int_0^\infty \exp \bigg( - 2 \int_0^y \frac{b(z, i)}{a(z, i)} \dd z \bigg) \dd y < \infty.\end{align}
		\begin{equation}\label{eq: a2}
		\left\{\quad \begin{split}
		\int_0^\infty\exp \bigg( - 2 \int_0^y \frac{b(z, i)}{a(z, i)} \dd z \bigg) \dd y &= \infty,\\ \int_0^\infty \exp \bigg( - 2 \int_0^y \frac{b(z, i)}{a(z, i)} \dd z \bigg) \int_y^\infty \frac{\exp \big( 2 \int_0^u \frac{b(z, i)}{a(z, i)} \dd z \big)}{a(u, i)}\dd u \dd y &= \infty.
		\end{split}\right.
		\end{equation}

		\begin{align}\label{eq: b1}\int_{-\infty}^0 \exp \bigg( 2 \int_y^0 \frac{b(z, i)}{a(z, i)} \dd z \bigg) \dd y < \infty.\end{align}
		\begin{equation}\label{eq: b2}\left\{\hspace{0.2cm}
		\begin{split}
		\int_{-\infty}^0 \exp \bigg( 2 \int_y^0 \frac{b(z, i)}{a(z, i)} \dd z \bigg) \dd y = \infty,\\ \int_{-\infty}^0 \exp \bigg( 2 \int_y^0 \frac{b(z, i)}{a(z, i)} \dd z \bigg) \int_{-\infty}^y \frac{\exp \big( - 2 \int_u^0 \frac{b(z, i)}{a(z, i)} \dd z \big)}{a(u, i)}\dd u \dd y = \infty.
		\end{split}\right.
		\end{equation}
	\end{enumerate}
\end{corollary}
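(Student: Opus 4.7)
The plan is to reduce the corollary to Theorem~\ref{prop: equiv} by verifying all of its prerequisites, after which the statement becomes a one-dimensional diffusion question that can be resolved with Azencott-type Lyapunov constructions fed into Theorems~\ref{theo: Ly} and~\ref{prop:cont}.

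First, I would verify the hypotheses required to invoke Theorem~\ref{prop: equiv}. The one-dimensional Yamada--Watanabe criterion yields pathwise uniqueness for the fixed-environment SDE~\eqref{eq: SDE some} from the assumed regularity on the coefficients (continuous $b(\cdot, i)$, locally H\"older-($\geq 1/2$) non-vanishing $a^{1/2}(\cdot, i)$). Combined with weak existence---guaranteed by continuity of the coefficients together with Feller's non-explosion test~\eqref{eq: FC}---the Yamada--Watanabe theorem (cf.\ \cite[Theorem~18.14]{Kallenberg}) delivers strong existence and uniqueness of each family $(P^i_x)_{x\in\mathbb{R}}$, and continuity plus uniqueness then yields their $C_b$-Feller property (e.g.\ by a localization of \cite[Corollary~11.1.5]{SV}). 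With Condition~\ref{cond: abs} in force, Proposition~\ref{prop: uni} produces a unique family $(P_y)_{y\in S}$; since $S_d$ is discrete, continuity of the coefficients in the first variable gives continuity of $(b, a)$ on $S$ for the product topology, so Proposition~\ref{prop: sd feller} supplies the strong Markov and $C_b$-Feller properties of $(P_y)_{y\in S}$. This establishes the first conclusion of the corollary and verifies Condition~\ref{cond: sec eq}.

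Theorem~\ref{prop: equiv} then reduces the Feller-Dynkin property of $(P_y)_{y\in S}$ to that of every one-dimensional diffusion $(P^i_x)_{x\in\mathbb{R}}$. Writing $p_i'(y) = \exp(-2\int_0^y b(z,i)/a(z,i)\,dz)$ and $m_i(u) = 2\,a(u,i)^{-1}\exp(2\int_0^u b(z,i)/a(z,i)\,dz)$, a Fubini rearrangement reduces the second clause of~\eqref{eq: a2} to $\int_0^\infty p_i(u) m_i(u)\,du = \infty$, and analogously for~\eqref{eq: b2}. The corollary is thus reduced to the classical one-dimensional integral test: for each $i$, the diffusion $(P^i_x)_{x\in\mathbb{R}}$ is Feller-Dynkin if and only if at each of $\pm\infty$ either the scale function is bounded or it is unbounded together with $\int p_i m_i = \infty$.

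For the sufficiency direction I would construct, for each prescribed compact $K \subset \mathbb{R}$, a Lyapunov function in the sense of Theorem~\ref{theo: Ly} by patching two one-sided solutions of the Azencott ODE $\tfrac{1}{2} a(\cdot,i)\, V'' + b(\cdot,i)\, V' = V$: by the analysis in \cite[\S4.1 and Lemma~4.2]{Azencott1974}, a strictly positive, decreasing (resp.\ increasing), twice continuously differentiable solution vanishing at $+\infty$ (resp.\ $-\infty$) exists precisely under~\eqref{eq: a1} or~\eqref{eq: a2} (resp.\ \eqref{eq: b1} or~\eqref{eq: b2}). After a smooth gluing across a neighborhood of $K$, with a possible additive shift to ensure strict positivity on $K$, the resulting $V_i$ belongs to an extension of $D$ (invoking Proposition~\ref{prop: redc} if necessary) and satisfies $\mathcal{K}^i V_i \leq c V_i$, so Theorem~\ref{theo: Ly} applies. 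For the necessary direction, if at $+\infty$ neither~\eqref{eq: a1} nor~\eqref{eq: a2} holds, Azencott's analysis instead yields a bounded, strictly positive, twice continuously differentiable solution $U$ of $\tfrac{1}{2} a(\cdot,i)\, U'' + b(\cdot,i)\, U' = \alpha U$ on some $[r,\infty)$ for a suitable $\alpha > 0$; extending $U$ to a bounded smooth function on $\mathbb{R}$ which is positive somewhere in $K = \{|x| \leq r\}$ and bounded below on $\mathbb{R}\setminus K$ lets Theorem~\ref{prop:cont} reject Feller-Dynkin, and the case of $-\infty$ is symmetric. The main obstacle is the ODE analysis, namely establishing that the decreasing solution vanishes at infinity exactly when~\eqref{eq: a1} or~\eqref{eq: a2} holds and that a bounded positive supersolution exists in the opposite regime; these are the classical computations underlying Feller's boundary classification as worked out in \cite{Azencott1974}.
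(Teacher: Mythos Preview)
Your reduction to the fixed-environment diffusions via Propositions~\ref{prop: uni} and~\ref{prop: sd feller} and Theorem~\ref{prop: equiv} is exactly what the paper does, and your justification of Condition~\ref{cond: sec eq} (Yamada--Watanabe pathwise uniqueness from the local H\"older-$\tfrac12$ hypothesis, weak existence from Feller's test~\eqref{eq: FC}, $C_b$-Feller from continuity plus uniqueness) matches the paper's citations to \cite{KaraShre}, \cite{RY}, \cite{Kallenberg} and \cite{SV}.

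The genuine difference is in the last step, the one-dimensional integral test for the Feller--Dynkin property of each $(P^i_x)_{x\in\mathbb{R}}$. The paper does not construct Lyapunov functions here at all: it invokes Remark~\ref{rem: eqivalence}, which characterizes the Feller--Dynkin property via the vanishing at infinity of $x\mapsto E_x[e^{-\alpha\tau_K}]$, and then cites \cite[Theorem~8.4.1]{pinsky1995positive}, where precisely this Laplace-transform criterion for one-dimensional diffusions is worked out and shown to be equivalent to the scale/speed conditions~\eqref{eq: a1}--\eqref{eq: b2}. Your route instead re-derives the one-dimensional test from scratch by feeding Azencott's ODE solutions into Theorems~\ref{theo: Ly} and~\ref{prop:cont}. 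This is in the spirit of the paper's machinery and should go through, but it is considerably more work than the paper's one-line citation: you must carry out the two-sided gluing carefully (your ``additive shift'' remark is dangerous, since a nonzero shift destroys membership in $C_0$; what you actually need is a multiplicative patching of the two monotone solutions, as in the proof of Proposition~\ref{prop: sd}), and for the converse you must verify all the sign and boundedness conditions of Theorem~\ref{prop:cont} from the ODE analysis. The paper's approach buys brevity by outsourcing the analysis to \cite{pinsky1995positive}; yours is self-contained modulo \cite{Azencott1974} and illustrates the Lyapunov criteria directly.
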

\begin{remark}\label{rem: b vanish}
If \(b \equiv 0\), then the conditions in part (ii) of Corollary \ref{coro: complete} are satisfies if and only if for all \(i \in S_d\) the following hold:
\begin{align}\label{eq: 1D without drift}
\int_0^\infty \frac{u}{a(u, i)}\dd u = \int_{- \infty}^0 \frac{- u}{a(u, i)} \dd u = \infty.
\end{align}
\end{remark}
\begin{corollary}\label{coro: conv}
	Assume that Condition \ref{cond: abs} holds and that for all \(i \in S_d\) the maps \(x \mapsto b(x, i)\) and \(x \mapsto a^\frac{1}{2} (x, i)\) are locally Lipschitz continuous and that for all \((x, i) \in S\) the MP \((C^2_b(\mathbb{R}^d),\mathcal{K}^i, \Sigma_c, x)\) has a solution. Furthermore, suppose that for each \(i \in S_d\) there is an \(r_i > 0\) and two locally H\"older continuous functions \(b_i \colon [r_i, \infty) \to \mathbb{R}\) and \(a_i \colon [r_i, \infty) \to (0, \infty)\) such that for all \(x \in \mathbb{R}^d \colon \|x\| \geq 2r_i\)
	\begin{align*}
	\langle  x, a(x, i)x\rangle &\leq a_i \left(\tfrac{\|x\|^2}{2}\right),\\
	\textup{trace } a(x, i) + 2 \langle x, b(x, i)\rangle &\geq b_i\left(\tfrac{\|x\|^2}{2}\right) \langle x, a(x, i)x\rangle,
	\end{align*}
	and either
	\begin{align*} 
	p_i(r) \triangleq \int_1^{r_i} \exp \left(- \int_1^y b_i(z) \dd z\right) \dd y, \quad \lim_{r \to \infty} p_i(r) < \infty,
	\end{align*}
	or \begin{align*}  
	\lim_{r \to \infty} p(r) = \infty \text{ and } \int_1^\infty p_i'(y) \int_y^\infty \frac{\dd z}{a_i(z) p'_i(z)} \dd y = \infty.
	\end{align*}
	Then, \((P_x)_{x \in S}\) is Feller-Dynkin. 
\end{corollary}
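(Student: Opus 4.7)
The plan is to verify the hypotheses of Theorem \ref{prop: equiv} and then derive the Feller-Dynkin property of each fixed-environment family by specializing the Lyapunov construction from the proof of Proposition \ref{prop: sd} to the pure diffusion case; the equivalence in Theorem \ref{prop: equiv} then closes the argument.

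First I would address the fixed-environment SDEs \eqref{eq: SDE some}. Local Lipschitz continuity of $b(\cdot, i)$ and $a^{\frac{1}{2}}(\cdot, i)$ yields pathwise uniqueness, and the assumed solvability of the MP $(C^2_b(\mathbb{R}^d), \mathcal{K}^i, \Sigma_c, x)$ provides weak existence for every deterministic initial value. The Yamada-Watanabe theorem as recalled in Remark \ref{rem:SE} then gives strong existence of $(P^i_x)_{x \in \mathbb{R}^d}$ and, in particular, uniqueness of $P^i_x$. Continuity of $b(\cdot, i)$ and $a(\cdot, i)$ combined with this uniqueness delivers the $C_b$-Feller property of $(P^i_x)_{x \in \mathbb{R}^d}$ by the argument already used in the continued Example 1 (via \cite[Corollary 11.1.5]{SV}).

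Next, Proposition \ref{prop: uni} applies, since Condition \ref{cond: abs} holds and each fixed-environment family exists strongly; this yields a unique family $(P_y)_{y \in S}$, which by Proposition \ref{prop: sd feller} (now applicable because $b$ and $a$ are continuous) is strongly Markov and $C_b$-Feller. Collecting these facts, Condition \ref{cond: sec eq} is verified. For each fixed $i \in S_d$ I would then invoke the change of variable from \cite[Section 4.1]{Azencott1974} together with \cite[Lemma 4.2]{Azencott1974} to produce a decreasing twice continuously differentiable $u_i \colon [\tfrac{1}{2}, \infty) \to (0, \infty)$ solving $\tfrac{1}{2} a_i b_i u_i' + \tfrac{1}{2} a_i u_i'' = u_i$ with $\lim_{x \to \infty} u_i(x) = 0$; this is precisely the step at which one of the two alternatives on $p_i$ is used. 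Extending $u_i$ to a twice continuously differentiable $\phi_i \colon [0, \infty) \to (0, \infty)$ with $\phi_i \geq 1$ on $[0, \tfrac{1}{2}]$ and $\phi_i = u_i$ on $(\tfrac{1}{2}, \infty)$, the function $V^i(x) \triangleq \phi_i(\tfrac{\|x\|^2}{2})$ verifies the hypotheses of Theorem \ref{theo: Ly} for every compact $K \subset \mathbb{R}^d$: the inequality $\mathcal{K}^i V^i \leq c_i V^i$ on $\{\|x\| > 2 r_i\}$ is checked as in the computation leading to the bound on $\mathcal{K} V$ in the proof of Proposition \ref{prop: sd} (with the trivial chain-Lyapunov $\zeta \equiv 1$), while the corresponding bound on $\{\|x\| \leq 2 r_i\}$ follows from local boundedness of $b(\cdot, i)$ and $a(\cdot, i)$. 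Hence $(P^i_x)_{x \in \mathbb{R}^d}$ is Feller-Dynkin for each $i \in S_d$.

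Finally, Theorem \ref{prop: equiv} under Conditions \ref{cond: abs} and \ref{cond: sec eq} transfers the Feller-Dynkin property of the fixed-environment families to $(P_y)_{y \in S}$. I do not expect a serious obstacle: the only delicate point is that no global growth hypothesis is imposed on the coefficients, yet strong existence of the fixed-environment diffusions is obtained purely from pathwise uniqueness (via the local Lipschitz assumption) and the assumed weak existence, so Proposition \ref{prop: uni} can be invoked without an auxiliary non-explosion argument.
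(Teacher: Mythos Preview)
Your proposal is correct and follows essentially the same route as the paper: establish strong existence and the $C_b$-Feller property of each fixed-environment family via local Lipschitz continuity plus the assumed weak existence (Yamada--Watanabe and \cite[Corollary 11.1.5]{SV}), invoke Proposition~\ref{prop: uni} and Proposition~\ref{prop: sd feller} for the switching family, use the Azencott-type Lyapunov construction from the proof of Proposition~\ref{prop: sd} together with Theorem~\ref{theo: Ly} to get the Feller--Dynkin property of each $(P^i_x)_{x\in\mathbb{R}^d}$, and conclude via Theorem~\ref{prop: equiv}. The only cosmetic slip is that your $u_i$ should be defined on $[r_i,\infty)$ (with $\phi_i\geq 1$ on $[0,r_i]$) rather than on $[\tfrac{1}{2},\infty)$, to match the domain of $a_i,b_i$ in the hypothesis.
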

Explicit conditions for the assumption that for all \((x, i) \in S\) the MP \((C^2_b(\mathbb{R}^d),\mathcal{K}^i, \Sigma_c, x)\) has a solution can, e.g., be found in \cite[Chapter 10]{SV}.
\begin{corollary}\label{prop: nf}
	Assume that there exists an \(i \in S_d\) such that the maps \(x \mapsto b(x, i)\) and \(x \mapsto a^\frac{1}{2} (x, i)\) are locally Lipschitz continuous and that for all \(x \in \mathbb{R}^d\) the MP \((C^2_b(\mathbb{R}^d),\mathcal{K}^i,  \Sigma_c, x)\) has a solution. Furthermore, suppose there is an \(r > 0\) and two locally H\"older continuous functions \(b_d \colon [r, \infty) \to \mathbb{R}\) and \(a_d \colon [r, \infty) \to (0, \infty)\) such that for all \(x \in \mathbb{R}^d \colon \|x\| \geq 2r\)
	\begin{align*}
	\langle  x, a(x, i)x\rangle &\geq a_d \left(\tfrac{\|x\|^2}{2}\right),\\
	\textup{trace } a(x, i) + 2 \langle x, b(x, i)\rangle &\leq b_d\left(\tfrac{\|x\|^2}{2}\right) \langle x, a(x, i)x\rangle,
	\end{align*}
	and 	\begin{align*} 
	p (t) \triangleq \int_{r + 1}^t\exp \left(- \int_{r+1
	}^y b_d(z) \dd z\right) \dd y \to \infty \text{ as } t \to \infty,
	\end{align*}
	and \begin{align*} 
	\int_{r + 1}^\infty p'(y) \int_y^\infty \frac{\dd z}{a_d(z) p'(z)} \dd y < \infty.
	\end{align*}
	Then, \((P_x)_{x \in S}\) is not Feller-Dynkin.
\end{corollary}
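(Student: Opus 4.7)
The plan is to reduce everything to Proposition \ref{lem: non-existence} applied to the distinguished state $i$. Accordingly, I need to establish that the fixed-environment family $(P^i_x)_{x \in \mathbb{R}^d}$ of solutions to the MP $(C^2_b(\mathbb{R}^d), \mathcal{K}^i, \Sigma_c, x)$ (a) exists strongly, (b) is $C_b$-Feller, and (c) fails to be Feller-Dynkin. Once these three points are in place, Proposition \ref{lem: non-existence} delivers the conclusion immediately.

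For (a), local Lipschitz continuity of $b(\cdot, i)$ and $a^{1/2}(\cdot, i)$ yields pathwise uniqueness for the SDE \eqref{eq: SDE some}. Combined with the assumed weak existence (the MP having a solution for every $x$), the Yamada--Watanabe theorem in the version cited in Remark \ref{rem:SE} produces a universally adapted Borel solution functional $F^i$, which is precisely strong existence in the sense of Definition \ref{def: stong existence}; it also delivers uniqueness in law of $(P^i_x)_{x \in \mathbb{R}^d}$. For (b), this uniqueness together with the continuity of $b(\cdot, i)$ and $a(\cdot, i)$ (which follows from the local Lipschitz hypothesis) gives the $C_b$-Feller property via Proposition \ref{prop: sd feller} applied in the degenerate switching setting $S_d = \{i\}$ with trivial $Q = 0$.

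For (c), I adapt the Lyapunov construction from the proof of Proposition \ref{prop: finite environment NFDP} to the pure diffusion MP. Using the change of variables in \cite[Section 4.1]{Azencott1974} together with \cite[Lemma 4.2]{Azencott1974}, the ODE
\[
\tfrac{1}{2} a_d b_d u' + \tfrac{1}{2} a_d u'' = u, \qquad u(r) = 1,
\]
admits a decreasing $C^2$ positive solution on $[r, \infty)$, and the two integral conditions $p(t) \to \infty$ together with finiteness of $\int_{r+1}^\infty p'(y) \int_y^\infty \frac{dz}{a_d(z) p'(z)}\, dy$ force $\lim_{x \to \infty} u(x) > 0$. Extend $u$ to a $C^2$ positive function $\phi \colon [0, \infty) \to (0, \infty)$ with $\phi \geq 1$ on $[0, r]$ and $\phi = u$ on $(r, \infty)$, and set $U(x) \triangleq \phi(\|x\|^2/2)$. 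The growth assumptions on $a$ and $b$ then give, exactly as in the proof of Proposition \ref{prop: finite environment NFDP} (with the ODE now arranged so that $u'' + b_d u' = 2u/a_d \geq 0$ and the inequality on $\langle x, a(x,i)x\rangle$ pointing the other way), the bound $\mathcal{K}^i U \geq U$ on $\{x : \|x\| > \sqrt{2r}\}$. Choosing $K = C \triangleq \{x \in \mathbb{R}^d : \|x\| \leq \sqrt{2r}\}$, the remaining hypotheses of Theorem \ref{prop:cont}---namely $U \in C^2_b(\mathbb{R}^d)$ (here $\phi$ is bounded thanks to its positive limit at infinity), $\max_K U > 0$, and $\inf_{\mathbb{R}^d \setminus C} U \geq \lim_{x \to \infty} u(x) > 0$---are all verified. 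Hence $(P^i_x)_{x \in \mathbb{R}^d}$ cannot be Feller-Dynkin, and Proposition \ref{lem: non-existence} concludes that $(P_x)_{x \in S}$ is not Feller-Dynkin either.

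The main obstacle is purely bookkeeping: one must cast the fixed-environment diffusion into the switching framework so that Proposition \ref{prop: sd feller} and the Lyapunov construction used in Proposition \ref{prop: finite environment NFDP} can be transferred verbatim. No genuinely new computation is required, since the conceptual work---both the extrinsic non-existence-of-Feller-Dynkin transfer (Proposition \ref{lem: non-existence}) and the ODE-based Lyapunov construction---is already carried out earlier in the paper.
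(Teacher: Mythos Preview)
Your proposal is correct and follows essentially the same route as the paper: verify strong existence and $C_b$-Feller for the fixed-environment family $(P^i_x)_{x \in \mathbb{R}^d}$, reproduce the Lyapunov construction from the proof of Proposition~\ref{prop: finite environment NFDP} to show via Theorem~\ref{prop:cont} that this family is not Feller--Dynkin, and then invoke Proposition~\ref{lem: non-existence}. The only cosmetic difference is that the paper obtains the $C_b$-Feller property for $(P^i_x)_{x \in \mathbb{R}^d}$ by directly citing the classical Stroock--Varadhan result \cite[Corollary 11.1.5]{SV}, whereas you take a small detour through Proposition~\ref{prop: sd feller} applied in the degenerate one-state switching setting; both are valid, but the direct citation is cleaner.
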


By \cite[Theorem 3.2]{doi:10.1137/15M1013584}, the family \((P_x)_{x \in S}\) has the strong Feller property if it is \(C_b\)-Feller and for all \(i \in S_d\) the families \((P^i_x)_{x \in \mathbb{R}^d}\) have the strong Feller property. Consequently, the strong Feller property and the Feller-Dynkin property are both inherited from the relative properties of processes in the fixed environments. We give a short example for a switching diffusion which has the strong Feller property, but not the Feller-Dynkin property.
\begin{example}	\label{ex: FD not SF}
	Let \(d = 1, S_d = \{1, 2\}, b\equiv 0\) and 
	\[
	a(x, i) \triangleq \begin{cases}
	1 + x^4,& i = 1,\\
	1,& i = 2,
	\end{cases}
	\]
	for \( (x, i) \in S.\)
	Due to \cite[Problem 5.5.27]{KaraShre}, \eqref{eq: FC} holds in the case \(b \equiv 0\).
	Thus, we conclude from Corollary \ref{coro: complete} that \((P_y)_{y \in S}\) exists uniquely and is \(C_b\)-Feller. Furthermore, due to \cite[Corollary 10.1.4]{SV}, \((P^i_x)_{x \in \mathbb{R}}\) has the strong Feller property for \(i = 1, 2\). Of course, the family \((P_x^2)_{x \in \mathbb{R}}\) consists of Wiener measures and is well-known to be strongly Feller. Therefore, \cite[Theorem 3.2]{doi:10.1137/15M1013584} implies that \((P_y)_{y \in S}\) has the strong Feller property, too.
	However, for \(i = 1\) the condition \eqref{eq: 1D without drift} fails because
	\[
	\int_0^\infty \frac{x\ \dd x}{1 + x^4} = \frac{\pi}{4}< \infty.
	\]
	Therefore, the family \((P_x)_{x \in S}\) is not Feller-Dynkin due to Corollary \ref{coro: complete}, see Remark \ref{rem: b vanish}.
\end{example}

\subsubsection{Proof of Proposition \ref{lem: non-existence}}
Since \((P^i_x)_{x \in \mathbb{R}^d}\) is \(C_b\)-Feller, one can show as in the proof of Theorem \ref{theo: Ly} that if for any compact set \(K \subset \mathbb{R}^d\) and any \(t > 0\) it holds that
\[
\limsup_{\|x\| \to \infty}P^i_x (X_t \in K) = 0, 
\]
then \((P^i_x)_{x \in \mathbb{R}^d}\) is Feller-Dynkin. Consequently, since we assume \((P^i_x)_{x \in \mathbb{R}^d}\) not to be Feller-Dynkin, there exists a sequence \((x_k)_{k \in \mathbb{N}} \subset \mathbb{R}^d\) with \(\|x_k\| \to \infty\) as \(k \to \infty\), a compact set \(K^o \subset \mathbb{R}^d\) and a \(t^o > 0\) such that 
\begin{align}\label{eq: mp not fd use}
\limsup_{k \to \infty} P^i_{x_k}(X_{t^o} \in K^o)  > 0.
\end{align}
The set \(G \triangleq K^o \times \{i\} \subset S\) is compact.
If we show that 
\begin{align}\label{eq: ts nf}
\limsup_{k \to \infty} P_{(x_k, i)} (X_{t^o} \in G) > 0, 
\end{align} 
then \((P_x)_{x \in S}\) cannot be Feller-Dynkin. To see this, assume for contradiction that \((P_x)_{x \in S}\) is Feller-Dynkin. Due to the locally compact version of Urysohn's lemma, there exists a function \(f \in C_0(S)\) such that \(0 \leq f \leq 1\) and \(f \equiv 1\) on \(G\). Consequently, we have 
\begin{align*}
P_{(x_k, i)} (X_{t^o} \in G) &= E_{(x_k, i)} \big[ f(X_{t^o}) \1 \{X_{t^o} \in G\}\big] \\&\leq E_{(x_k, i)} \big[ f(X_{t^o})\big] \to 0 \text{ as } k \to \infty, 
\end{align*}
because \((P_x)_{x \in S}\) is Feller-Dynkin. This, however, is a contradiction and we conclude that \((P_x)_{x \in S}\) cannot be Feller-Dynkin.
In summary, it suffices to show \eqref{eq: ts nf}.

	For a \cadlag \(S_d\)-valued process \((Z_t)_{t \geq 0}\), we set 
	\[
	\tau (Z) \triangleq \inf\big(t \in \mathbb{R}_+ \colon Z_t \not = Z_0\big), 
	\]
	which is a stopping time for any right-continuous filtration to which \((Z_t)_{t \geq 0}\) is adapted, see \cite[Proposition 2.1.5]{EK}.
	In the following let \((Y_t)_{t \geq 0}, (Z_t)_{t \geq 0}\) and \((W_t)_{t \geq 0}\) be as in Lemma \ref{lem: representation} for \(y = (x, i)\).
	On \(\{t \leq \tau (Z)\}\) we have 
	\[
	Y_t = x + \int_0^t b(Y_s, i) \dd s + \int_0^t a^\frac{1}{2} (Y_s, i) \dd W_s, 
	\]
	which is the SDE corresponding to the MP \((C^2_b(\mathbb{R}^d),\mathcal{K}^i,  \Sigma_c, x)\), see \cite[Corollary 5.4.8]{KaraShre}. 
	We now need a local version of pathwise uniqueness. The proof of the following lemma is given after the proof of Proposition \ref{lem: non-existence} is complete.
	\begin{lemma}\label{lem: loc pathwise uniqueness}
	Suppose that the SDE 
	\begin{align}\label{eq: SDE--}
	\dd Y_t = \mu(Y_t) \dd t + \sigma(Y_t)\dd W_t
	\end{align}
	satisfies weak existence and pathwise uniqueness (see \cite[Section IX.1]{RY}). In other words, we assume that the martingale problem corresponding to the SDE \eqref{eq: SDE--} exists strongly, see Remark \ref{rem:SE} and \cite[Section 5.4]{KaraShre}. Consider a filtered probability space with right-continuous complete filtration \((\mathcal{G}_t)_{t \geq 0}\), which supports a Brownian motion \((W_t)_{t \geq 0}\) and an \(\mathbb{R}^d\)-valued \(\mathcal{G}_0\)-measurable random variable \(\psi\). Take a \((\mathcal{G}_t)_{t \geq 0}\)-stopping time \(\tau\) and let \((Y_t)_{t \geq 0}\) be the solution to \eqref{eq: SDE--} with initial value \(\psi\). Then, all solutions to 
	\[
	\dd O_t = \mu(O_t)\1_{\{t \leq \tau\}} \dd t + \sigma (O_t)\1_{\{t \leq \tau\}} \dd W_t, \quad O_0 = \psi, 
	\]
	are indistinguishable from \((Y_{t \wedge \tau})_{t \geq 0}\).
	\end{lemma}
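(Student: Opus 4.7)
The plan is to reduce the stopped uniqueness question to pathwise uniqueness of the original (non-stopped) SDE via a concatenation argument. First I would observe that \(Y^{\tau}_t := Y_{t \wedge \tau}\) is itself a solution of the stopped equation: on \(\{s \leq \tau\}\) one has \(Y_s = Y_{s \wedge \tau}\), so
\[
Y^{\tau}_t = \psi + \int_0^{t \wedge \tau} \mu(Y_s) \dd s + \int_0^{t \wedge \tau} \sigma(Y_s) \dd W_s = \psi + \int_0^t \mu(Y^{\tau}_s) \mathbf{1}_{\{s \leq \tau\}} \dd s + \int_0^t \sigma(Y^{\tau}_s) \mathbf{1}_{\{s \leq \tau\}} \dd W_s.
\]
Thus it suffices to show that any solution \(O\) of the stopped SDE is indistinguishable from \(Y^\tau\).

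For such an \(O\), the vanishing of the two integrands on \(\{s > \tau\}\) gives \(O_t = O_\tau\) for all \(t \geq \tau\); in particular \(O\) is already stopped at \(\tau\). To continue \(O\) past \(\tau\) into a full solution of the non-stopped SDE, set \(\tilde{W}_s := W_{\tau + s} - W_\tau\), which by the strong Markov property of Brownian motion is a Brownian motion for the shifted right-continuous complete filtration \(\tilde{\mathcal{G}}_s := \mathcal{G}_{\tau + s}\). Since \(O_\tau\) is \(\tilde{\mathcal{G}}_0\)-measurable, the assumed strong existence (Remark \ref{rem:SE}) yields a universally adapted Borel map \(F \colon \mathbb{R}^d \times \Sigma_c \to \Sigma_c\) such that \(R := F(O_\tau, \tilde{W})\) solves the original SDE with initial value \(O_\tau\) driven by \(\tilde{W}\).

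Define the concatenation \(\tilde{O}_t := O_t\) for \(t \leq \tau\) and \(\tilde{O}_t := R_{t - \tau}\) for \(t > \tau\) (well-defined since \(R_0 = O_\tau\)). Using the time-shift identity for Itô integrals,
\[
\int_\tau^t \sigma(\tilde{O}_s) \dd W_s = \int_0^{t - \tau} \sigma(R_u) \dd \tilde{W}_u \quad \text{on } \{t \geq \tau\},
\]
together with the analogous identity for the Lebesgue integral, one verifies that \(\tilde{O}\) is a continuous \((\mathcal{G}_t)\)-adapted process solving \(\dd \tilde{O}_t = \mu(\tilde{O}_t) \dd t + \sigma(\tilde{O}_t) \dd W_t\) with initial value \(\psi\). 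By pathwise uniqueness applied to \(\tilde{O}\) and \(Y\) on the same filtered space with the same Brownian motion and initial value, the two processes are indistinguishable. Therefore \(O_t = \tilde{O}_t = Y_t = Y_{t \wedge \tau}\) for \(t \leq \tau\), while for \(t > \tau\) the first step gives \(O_t = O_\tau = Y_\tau = Y_{t \wedge \tau}\).

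The main technical point is the verification that the concatenation \(\tilde{O}\) genuinely solves the non-stopped SDE on all of \(\mathbb{R}_+\); specifically, the clean decomposition of the stochastic integral against \(W\) as an integral over \([0,\tau]\) plus a shifted integral driven by \(\tilde{W}\). This is a standard consequence of the strong Markov property of Brownian motion together with the time-shift formula for stochastic integrals, but care is required to ensure that \(R\) is adapted to a filtration compatible with both \(\tilde{W}\) and the \((\mathcal{G}_t)\)-adaptedness needed for the integral against \(W\) on \([\tau, \infty)\); this is handled by using the completed right-continuous filtration generated by \(\mathcal{G}_\tau\) and \(\tilde{W}\).
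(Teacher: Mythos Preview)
Your approach is essentially the same as the paper's: both continue the stopped solution \(O\) past \(\tau\) using the shifted Brownian motion \(W_{\tau+\cdot}-W_\tau\) and the strong-existence functional, then invoke pathwise uniqueness for the full SDE to identify the concatenation with \(Y\). The paper differs only in two technical points you gloss over: it first localizes to finite \(\tau\) (needed for the shifted Brownian motion to make sense), and it gives an explicit argument, via approximation by stopping times with discrete range, that the piece \(R_{t-\tau}\1_{\{\tau<t\}}\) is \((\mathcal{G}_t)\)-progressive, rather than appealing to an auxiliary filtration as you suggest.
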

	
	Because we assume that \(P^i_x\) exists strongly, Lemma \ref{lem: loc pathwise uniqueness} and the independence of the \(\sigma\)-fields \(\sigma(W_t, t \in \mathbb{R}_+)\) and \(\sigma(Z_t, t \in \mathbb{R}_+)\), see Lemma \ref{lem: representation}, imply that 
	\begin{align*}
	P_{(x, i)} \big(X_{t^0} \in G\big) 
	&\geq P \big(Y_{t^o \wedge \tau (Z)} \in K^o, Z_{t^o} = i, t^o < \tau(Z)\big)
	\\&= P \big(F^i(x, W)_{t^o \wedge \tau(Z)} \in K^o, t^o < \tau(Z)\big)
	\\&= P \big(F^i (x, W)_{t^o} \in K^o\big) P \big(t^o < \tau(Z)\big)
	\\&= P^i_x \big(X_{t^o} \in K^o\big) P \big(t^o < \tau(Z)\big),
	\end{align*}
	where \(F^i\) is as in Definition \ref{def: stong existence}. 
		It is well-known that \(\tau(Z)\) is exponentially distributed with parameter \(- q_{ii}\), see, e.g., \cite[Lemma 10.18]{Kallenberg}.
Therefore, we have 
\begin{align*}
P_{(x, i)} \big(X_{t^o} \in G\big) &\geq 
P^i_x \big(X_{t^o} \in K^o\big) e^{q_{ii} t^o}.
\end{align*}
We conclude \eqref{eq: ts nf} from \eqref{eq: mp not fd use}. This finishes the proof.
\qed
\\

\noindent
\textit{Proof of Lemma \ref{lem: loc pathwise uniqueness}:}
Due to localization, we can assume that \(\tau\) is finite.
Let \((B_t)_{t \geq 0}\) be defined by 
\[
B_t \triangleq W_{t + \tau} - W_\tau,\quad t \in \mathbb{R}_+.
\] 
Due to \cite[Proposition V.1.5]{RY} and L\'evy's characterization (see, e.g., \cite[Theorem 3.3.16]{KaraShre}), the process \((B_t)_{t \geq 0}\) is a \((\mathcal{G}_{t + \tau})_{t \geq 0}\)-Brownian motion
and, due to the strong existence hypothesis, there exists a solution \((U_t)_{t \geq 0}\) to the SDE
\[
\dd U_t = \mu(U_t) \dd t + \sigma (U_t)\dd B_t, \quad U_0 = O_\tau.
\] 
Now, we set 
\[
V_t \triangleq \begin{cases} O_t,&t \leq \tau,\\
U_{t - \tau},&t>\tau.
\end{cases}
\]
Because \(U_0 = O_\tau\), the process \((V_t)_{t \geq 0}\) has continuous paths. We claim that \((U_{t - \tau}\1_{\{\tau < t\}})_{t \geq 0}\) is progressive. This implies that \((V_t)_{t \geq 0}\) is adapted. Note that \(t \mapsto U_{t - \tau} \1_{\{\tau < t\}}\) is left-continuous and that \(s \mapsto  U_{t - s} \1_{\{s < t\}}\) is right-continuous. Thus, by an approximation argument, it suffices to show that \((h_t)_{t \geq 0} \triangleq (U_{t - \rho} \1_{\{\rho <t\}})_{t \geq 0}\) is adapted for any stopping time \(\rho\) which takes values in the countable set \(2^{-n} \overline{\mathbb{N}}\) for some \(n \in \mathbb{N}\) and satisfies \(\rho \geq \tau\).
Let \(G \in \mathcal{B}(\mathbb{R}^d)\) and set \(N_{t} \triangleq 2^{-n} \overline{\mathbb{N}} \cap [0, t)\). We have 
\[
\{h_t \in G\} = \bigg(\bigcup_{k \in N_{t}}  \big( \{U_{t - k} \in G\} \cap \{\rho = k\}\big)\bigg) \cup \big(\{0 \in G\} \cap \{\rho \geq t\}\big)  \in  \mathcal{G}_t.
\]
Here, we use that \(\{U_{t - k} \in G\} \in \mathcal{G}_{t - k + \tau} \subseteq \mathcal{G}_{t - k + \rho}\) and the fact that \(\mathcal{G}_{t - k + \rho} \cap \{\rho = k\} \subseteq \mathcal{G}_t\). Therefore, \((U_{t - \tau}\1_{\{\tau <t\}})_{t \geq 0}\) is progressive.
On \(\{t \leq \tau\}\) we have 
\[
V_t = \psi + \int_0^t \mu(V_s)\dd s + \int_0^t \sigma (V_s)\dd W_s.
\]
Classical rules for time-changed stochastic integrals (see, e.g., \cite[Propositions V.1.4, V.1.5]{RY}) yield that on \(\{t > \tau\}\)
\begin{align}
V_t &= O_\tau + \int_0^{t - \tau} \mu(U_s)\dd s + \int_0^{t - \tau} \sigma (U_s)\dd B_s\label{eq:1}
\\&= V_\tau + \int_\tau^t \mu(U_{s - \tau}) \dd s + \int_{\tau}^t \sigma (U_{s - \tau}) \dd W_s\label{eq:2}
\\&= V_\tau + \int_\tau^t \mu(V_s) \dd s + \int_{\tau}^t \sigma (V_s) \dd W_s\nonumber
\\&= \psi + \int_0^t \mu(V_s)\dd s + \int_0^t \sigma (V_s)\dd W_s.\nonumber
\end{align}
Consequently, \((V_t)_{t \geq 0}\) solves the SDE 
\[
\dd V_t = \mu(V_t)\dd t + \sigma (V_t)\dd W_t, \quad V_0 = \psi.
\]
By the strong existence hypothesis, we conclude that a.s. \(V_t = Y_t\) for all \(t \in \mathbb{R}_+\). The definition of \((V_t)_{t \geq 0}\) implies the claim.
\qed
\subsubsection{Proof of Theorem \ref{prop: equiv}}
	The implication (i) \(\Rightarrow\) (ii) follows from Proposition \ref{lem: non-existence}.
	
	We prove the implication (ii) \(\Rightarrow\) (i) using an explicit construction of the family \((P_y)_{y \in S}\). 
	Take a filtered probability space \((\Theta, \mathcal{G}, (\mathcal{G}_t)_{t \geq 0}, P)\) satisfying the usual hypothesis of a right-continuous and complete filtration, which supports a Brownian motion \((W_t)_{t \geq 0}\) for the filtration \((\mathcal{G}_t)_{t \geq 0}\) and an \(S_d\)-valued continuous-time Markov chain \((Z_t)_{t \geq 0}\) for the filtration \((\mathcal{G}_t)_{t \geq 0}\) with \(Q\)-matrix \(Q\) and \(Z_0 = i\). Recalling Remark \ref{rem: after lem 2}, we note that the \(\sigma\)-fields \(\sigma (W_t, t \in \mathbb{R}_+)\) and \(\sigma (Z_t, t \in \mathbb{R}_+)\) are independent. 
	Define inductively 
	\begin{align}\label{eq: jump times}
	\tau_0 \triangleq 0, \quad \tau_n \triangleq \inf \big( t \geq \tau_{n - 1} \colon Z_t \not = Z_{\tau_{n - 1}}\big), \quad n \geq 1,
	\end{align}
	and
	\[
	\sigma_0 \triangleq 0,\quad \sigma_n  \triangleq \tau_n - \tau_{n - 1} = \inf \big( t \in \mathbb{R}_+ \colon Z_{t + \tau_{n -1}} \not = Z_{\tau_{n -1}} \big), \quad n \geq 1.
	\]
	Because no state of \((Z_t)_{t \geq 0}\) is absorbing due to Condition \ref{cond: abs}, we have a.s. \(\tau_n < \infty\) for all \(n \in \mathbb{N}\). 
	Furthermore, for all \(n \in \mathbb{N}\) the random time \(\tau_n\) is a \((\mathcal{G}_t)_{t \geq 0}\)-stopping time and  the random time \(\sigma_n\) is a \((\mathcal{G}_{t + \tau_{n - 1}})_{t \geq 0}\)-stopping time, see \cite[Proposition 1.1.12]{KaraShre} and \cite[Lemma 6.5, Theorem 6.7]{Kallenberg}.
	Due to \cite[Proposition V.1.5]{RY} and L\'evy's characterization, the process \((W^n_t)_{t \geq 0} = (W_{t + \tau_n} - W_{\tau_n})_{t \geq 0}\) is a \((\mathcal{G}_{t + \tau_n})_{t \geq 0}\)-Brownian motion and therefore independent of \(\mathcal{G}_{\tau_n}\).
	For all \(k \in S_d\) let \(F^k \colon \mathbb{R}^d \times \Sigma_c \to \Sigma_c\) be as in Definition \ref{def: stong existence}
and set \((Y^{0, x}_t)_{t \geq 0} \triangleq F^i (x, W)\). By induction, define further
\[
(Y^{n, x}_t)_{t \geq 0} \triangleq \sum_{k = 1}^N F^k(Y^{n-1,x}_{\sigma_n}, W^n) \1 \{Z_{\tau_n} = k\},\quad n \in \mathbb{N}, 
\]
and set 
	\[
	Y^x_t \triangleq x \1 \{t = 0\} + \sum_{n = 0}^\infty Y^{n, x}_{t - \tau_n} \1\{\tau_n < t \leq \tau_{n+1}\}, \quad t \in \mathbb{R}_+.
	\]
	The process \((Y^x_t)_{t \geq 0}\) has continuous paths and similar arguments as used in the proof of Lemma \ref{lem: loc pathwise uniqueness} show that \((Y^x_t)_{t \geq 0}\) is adapted, too.
	Next, five technical lemmata follow.
	\begin{lemma}\label{lem: 1}
	The law of \((Y^x_t, Z_t)_{t \geq 0}\) is given by \(P_{(x, i)}\).
	\end{lemma}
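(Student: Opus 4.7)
The plan is to apply the easy direction \((\Leftarrow)\) of Lemma \ref{lem: representation}. By construction we already have a filtered probability space \((\Theta, \mathcal{G}, (\mathcal{G}_t)_{t \geq 0}, P)\) supporting the \((\mathcal{G}_t)\)-Brownian motion \((W_t)\) and the \((\mathcal{G}_t)\)-Markov chain \((Z_t)\) with \(Q\)-matrix \(Q\) and \(Z_0 = i\), and by Remark \ref{rem: after lem 2}\,(i) the \(\sigma\)-fields \(\sigma(W_t, t \in \mathbb{R}_+)\) and \(\sigma(Z_t, t \in \mathbb{R}_+)\) are independent. The process \((Y^x_t)\) is continuous, and adaptedness to \((\mathcal{G}_t)\) is obtained by the same progressive-measurability bookkeeping used in the proof of Lemma \ref{lem: loc pathwise uniqueness}. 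Hence it will be enough to verify the SDE
\[
\dd Y^x_t = b(Y^x_t, Z_t)\dd t + a^{\frac{1}{2}}(Y^x_t, Z_t)\dd W_t, \quad Y^x_0 = x,
\]
and then Lemma \ref{lem: representation} together with uniqueness of \((P_y)_{y \in S}\) from Condition \ref{cond: sec eq} will identify the joint law of \((Y^x, Z)\) with \(P_{(x,i)}\).

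I would prove the SDE by induction on \(n\), establishing that the stopped process \((Y^x_{t \wedge \tau_{n+1}})_{t \geq 0}\) satisfies the corresponding stopped SDE. The base case \(n = 0\) is immediate from strong existence applied to \(F^i\): on \([0, \tau_1)\) we have \(Z_t \equiv i\) and \(Y^x_t = F^i(x, W)_t\) solves the SDE with coefficients \(b(\cdot, i), a^{\frac{1}{2}}(\cdot, i)\), which matches the target. For the inductive step, set \(W^n_\cdot \triangleq W_{\tau_n + \cdot} - W_{\tau_n}\); by \cite[Proposition V.1.5]{RY} and L\'evy's characterization, \((W^n_t)\) is a \((\mathcal{G}_{t + \tau_n})\)-Brownian motion independent of \(\mathcal{G}_{\tau_n}\), and \(Y^{n-1, x}_{\sigma_n} = Y^x_{\tau_n}\) is \(\mathcal{G}_{\tau_n}\)-measurable. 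On \(\{Z_{\tau_n} = k\}\), strong existence applied to \(F^k\) therefore gives
\[
Y^{n, x}_s = Y^x_{\tau_n} + \int_0^s b(Y^{n, x}_r, k)\dd r + \int_0^s a^{\frac{1}{2}}(Y^{n, x}_r, k) \dd W^n_r, \quad s \in \mathbb{R}_+.
\]
Setting \(s = t - \tau_n\) and applying the time-change rules for stochastic integrals exactly as in the derivation of \eqref{eq:1}--\eqref{eq:2} from the proof of Lemma \ref{lem: loc pathwise uniqueness} rewrites this, on \(\{Z_{\tau_n} = k,\ \tau_n \leq t \leq \tau_{n+1}\}\), as
\[
Y^x_t = Y^x_{\tau_n} + \int_{\tau_n}^t b(Y^x_r, k)\dd r + \int_{\tau_n}^t a^{\frac{1}{2}}(Y^x_r, k) \dd W_r.
\]
Multiplying by \(\1\{Z_{\tau_n} = k\}\) and summing over \(k \in S_d\), then using that \(Z_r = Z_{\tau_n}\) for \(r \in [\tau_n, \tau_{n+1})\) to replace \(k\) by \(Z_r\) inside the integrals, combines with the inductive hypothesis to extend the SDE to \([0, \tau_{n+1}]\).

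Finally, Condition \ref{cond: abs} ensures that no state of \((Z_t)\) is absorbing, and the standing assumption that \((Z_t)\) is non-explosive yields \(\tau_n \nearrow \infty\) almost surely, so the SDE holds on all of \([0, \infty)\). The main technical obstacle will be the careful time-change from \(W^n\) to \(W\) on each block \([\tau_n, \tau_{n+1}]\) together with the measure-theoretic handling of the selector \(\sum_k \1\{Z_{\tau_n} = k\}\); both, however, are already essentially spelled out in the proof of Lemma \ref{lem: loc pathwise uniqueness}, so the argument should go through without substantial new difficulty.
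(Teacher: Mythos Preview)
Your proposal is correct and follows essentially the same route as the paper: verify by induction over the jump times \(\tau_n\) that \((Y^x_t)\) solves the SDE \(\dd Y^x_t = b(Y^x_t, Z_t)\dd t + a^{1/2}(Y^x_t, Z_t)\dd W_t\) using the time-change rules for stochastic integrals, then invoke Lemma~\ref{lem: representation} and the uniqueness in Condition~\ref{cond: sec eq} to identify the law. The paper's write-up is slightly more compressed (``Iterating yields\ldots'') but the content and the key technical device are identical to what you describe.
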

\begin{proof}
	The process \((V_t)_{t \geq 0} \triangleq F^{k} (Y^{n - 1, x}_{\sigma_n}, W^n)\) has the dynamics
	\begin{align*}
	\dd V_t &= b(V_t, k) \dd t + a^\frac{1}{2} (V_t,k) \dd W^n_t, \quad V_0 = Y^{n-1, x}_{\sigma_n}.
	\end{align*}
	Thus, due to classical rules for time-changed stochastic integrals, for \(t \in [\tau_n, \tau_{n+1}]\) on \(\{Z_{\tau_n} = k\}\) we have
	\begin{align*}
	Y^{n, x}_{t - \tau_n} &= F^{k}(Y^{n-1, x}_{\sigma_n}, W^n)_{t - \tau_n}
	\\&= Y^{n-1, x}_{\sigma_n} + \int_0^{t - \tau_n} b(V_s, k)\dd s+ \int_0^{t - \tau_n} a^\frac{1}{2}(V_s, k) \dd W^n_s
	\\&= Y^{n-1, x}_{\sigma_n} + \int_{\tau_n}^{t} b(Y^{n, x}_{s - \tau_n}, k)\dd s + \int_{\tau_n}^t a^\frac{1}{2} (Y^{n, x}_{s - \tau_n}, k) \dd W_s
	\\&= Y^{n-1, x}_{\sigma_n} + \int_{\tau_n}^{t} b(Y^x_s, k)\dd s + \int_{\tau_n}^t a^\frac{1}{2} (Y^x_s, k) \dd W_s
	\\&= Y^{n-1,x}_{\sigma_n} + \int_{\tau_n}^{t} b(Y^x_s, Z_{s})\dd s + \int_{\tau_n}^t a^\frac{1}{2} (Y^x_s, Z_{s}) \dd W_s.
	\end{align*}
	Iterating yields that for \(t \in [\tau_n, \tau_{n + 1}]\)
	\[
	Y^{n, x}_{t - \tau_n} = x + \int_0^t b(Y^x_s, Z_{s}) \dd s + \int_0^t a^\frac{1}{2} (Y^x_s, Z_{s})\dd W_s.
	\]
	Therefore, the process \((Y^x_t)_{t \geq 0}\) satisfies the SDE
\[
\dd Y^x_t = b(Y^x_t, Z_t) \dd t + a^\frac{1}{2} (Y^x_t, Z_t) \dd W_t,\quad Y^x_0 = x,
\]
and, consequently, the uniqueness of \(P_{(x, i)}\) and Lemma \ref{lem: representation} imply that the law of \((Y^x_t, Z_t)_{t \geq 0}\) coincides with \(P_{(x, i)}\).
\end{proof}
\begin{lemma}\label{lem: ind}
	For all Borel sets \(G \subseteq \Sigma_c\) we have a.s. \[P\big((W^n_t)_{t \geq 0} \in G| \sigma (\mathcal{G}_{\tau_n}, \sigma_{n + 1})\big) = P\big((W^n_t)_{t \geq 0} \in G\big).\]
\end{lemma}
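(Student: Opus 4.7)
The plan is to exploit two structural facts: the global independence $\sigma(W_t, t \geq 0) \perp \sigma(Z_t, t \geq 0)$ recalled in Remark \ref{rem: after lem 2}, together with the strong Markov property of $(W_t)_{t \geq 0}$ at the $(\mathcal{G}_t)_{t \geq 0}$-stopping time $\tau_n$. First I would observe that $\sigma_{n+1} = \inf(t \geq 0 \colon Z_{t+\tau_n} \not= Z_{\tau_n})$ is a measurable functional of the post-$\tau_n$ chain $Z^n \triangleq (Z_{t+\tau_n})_{t \geq 0}$, so that the assertion reduces to the independence of $W^n$ from the enlarged $\sigma$-field $\sigma(\mathcal{G}_{\tau_n}, Z^n)$ (projected onto the first jump time).

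The key step is to show that conditionally on $\mathcal{G}_{\tau_n}$ the processes $W^n$ and $Z^n$ are independent, with $W^n$ having the Wiener law. Since the coordinate processes $(W_t)$ and $(Z_t)$ are both Markov and globally independent, the joint process $(W_t, Z_t)_{t \geq 0}$ is a Markov process on $\mathbb{R}^d \times S_d$ whose transition kernel factorizes as the product of the Brownian and the chain kernels. The strong Markov property of this joint process at $\tau_n$ then yields that the conditional joint law of $(W^n, Z^n)$ given $\mathcal{G}_{\tau_n}$ is the product of Wiener measure and the law of the Markov chain starting from $Z_{\tau_n}$. In particular $W^n$ is conditionally distributed as Brownian motion (independent of $\mathcal{G}_{\tau_n}$) and $W^n$ is conditionally independent of $Z^n$, hence of $\sigma_{n+1}$.

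From this, for any bounded Borel $f \colon \Sigma_c \to \mathbb{R}$, $h \colon \mathbb{R}_+ \to \mathbb{R}$ and any $A \in \mathcal{G}_{\tau_n}$ I compute
\begin{align*}
E\bigl[f(W^n)\, h(\sigma_{n+1})\, \1_A\bigr]
&= E\bigl[\1_A\, E[f(W^n)\mid \mathcal{G}_{\tau_n}]\, E[h(\sigma_{n+1})\mid \mathcal{G}_{\tau_n}]\bigr]\\
&= E[f(W^n)]\cdot E\bigl[\1_A\, h(\sigma_{n+1})\bigr],
\end{align*}
and a standard monotone class argument promotes this to the desired independence $W^n \perp \sigma(\mathcal{G}_{\tau_n}, \sigma_{n+1})$. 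The main delicacy is the invocation of the joint strong Markov property with respect to the possibly enriched filtration $(\mathcal{G}_t)_{t \geq 0}$. Should this be problematic, it can instead be established by disintegration over $\mathcal{Z} \triangleq \sigma(Z_t, t \geq 0)$: since $\tau_n$ and $\sigma_{n+1}$ are $\mathcal{Z}$-measurable, and conditioning on $\mathcal{Z}$ renders $\tau_n$ a deterministic value while preserving the Wiener law of $W$ (by the independence of $W$ and $Z$), the classical Markov property of Brownian motion at a deterministic time yields that $W^n$ is conditionally on $\mathcal{Z}$ a Brownian motion independent of $\sigma(W_s, s \leq \tau_n)$, which contains the $W$-part of $\mathcal{G}_{\tau_n}$; combining this with $W^n \perp \mathcal{Z}$ gives the result.
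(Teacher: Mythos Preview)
Your main approach is correct and essentially the same as the paper's: establish that the joint process \((W_t, Z_t)_{t \geq 0}\) is strongly Markov for \((\mathcal{G}_t)_{t \geq 0}\) with product transition kernel, deduce that \(W^n\) and \(Z^n\) are conditionally independent given \(\mathcal{G}_{\tau_n}\) with \(W^n\) having Wiener law, and then combine this with \(W^n \perp \mathcal{G}_{\tau_n}\) to obtain \(W^n \perp \sigma(\mathcal{G}_{\tau_n}, \sigma_{n+1})\). The paper resolves the delicacy you flag (the joint strong Markov property with respect to the possibly enriched filtration \((\mathcal{G}_t)_{t \geq 0}\)) via the martingale problem: since \((W_t)\) and \((Z_t)\) are each Markov for \((\mathcal{G}_t)\), the pair solves the product martingale problem for \((\mathcal{G}_t)\), and well-posedness of that problem (\cite[Proposition 4.1.5, Theorem 4.4.2]{EK}) yields the strong Markov property in the enriched filtration. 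Your backup disintegration argument over \(\mathcal{Z}\) does \emph{not} close this gap, because it only delivers \(W^n \perp \sigma(W_s, s \leq \tau_n) \vee \mathcal{Z}\), whereas \(\mathcal{G}_{\tau_n}\) may be strictly larger than the trace of \(\sigma(W,Z)\); you would still need to use that \((W_t)\) and \((Z_t)\) are Markov for \((\mathcal{G}_t)\) itself.
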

\begin{proof}
	Let \(\mathscr{W}_z\) be the Wiener measure with starting value \(z\in \mathbb{R}^d\) and \(P^\star_k\) be the law of a Markov chain with \(Q\)-matrix \(Q\) and starting value \(k \in S_d\).
	Due to Remark \ref{rem: after lem 2}, Proposition \ref{prop: existence initial law} in Appendix \ref{app: prop exietence initial law}
	 and \cite[Proposition 4.1.5, Theorems 4.4.2]{EK}, the map \((z,k) \mapsto \mathscr{W}_z \otimes P^\star_k\) is Borel and the process \((W_t, Z_t)_{t \geq 0}\) is a strong Markov process in the following sense: For all \(F \in \mathcal{F}\) and all a.s. finite \((\mathcal{G}_t)_{t \geq 0}\)-stopping times \(\theta\) a.s.
	\[
	P \big((W_{t + \theta}, Z_{t + \theta})_{t \geq 0} \in F |\mathcal{G}_\theta\big) = \big(\mathscr{W}_{W_\theta} \otimes P^\star_{Z_\theta}\big) (F).
	\]
	Let \(F \subseteq \Sigma_d\) be Borel.
	The strong Markov properties of \((Z_t)_{t \geq 0}, (W_t)_{t \geq 0}\) and \((W_t, Z_t)_{t \geq 0}\) imply that a.s.
	\begin{align*}
	P \big((W_{t + \tau_n})_{t \geq 0} \in G&, (Z_{ t + \tau_n})_{t \geq 0} \in F | \mathcal{G}_{\tau_n}\big) 
	\\&=  \mathscr{W}_{W_{\tau_n}} (G)\ P^\star_{Z_{\tau_n}} (F)
\\&= P\big((W_{t + \tau_n})_{t \geq 0} \in G |\mathcal{G}_{\tau_n}\big)  P\big((Z_{t + \tau_n})_{t \geq 0} \in F|\mathcal{G}_{\tau_n}\big). 
	\end{align*}
	This implies that \(\sigma(W^n_t, t \in \mathbb{R}_+)\) and \(\sigma (\sigma_{n + 1})\) are independent given \(\mathcal{G}_{\tau_n}\). Thus, \cite[Proposition 5.6]{Kallenberg} and the independence of \(\sigma(W^n_t, t \in \mathbb{R}_+)\) and \(\mathcal{G}_{\tau_n}\) yield that a.s.
	\[
	P\big((W^n_t)_{t \geq 0} \in G | \sigma (\mathcal{G}_{\tau_n},\sigma_{n + 1})\big) = P\big((W^n_t)_{t \geq 0} \in G| \mathcal{G}_{\tau_n}\big) = P\big((W^n_t)_{t \geq 0} \in G\big),
	\]
	which is the claim.
\end{proof}

\begin{lemma}\label{lem: 2}
	For all \(n \in \mathbb{N}_0\) we have \(\|Y^{n, x}_{\sigma_{n+1}}\| \to \infty\) in probability as \(\|x\| \to \infty\).
\end{lemma}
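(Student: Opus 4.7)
The plan is to prove the claim by induction on \(n \in \mathbb{N}_0\). The base case \(n = 0\) invokes the Feller-Dynkin hypothesis for \((P^i_x)_{x \in \mathbb{R}^d}\) directly, while the inductive step combines this hypothesis with the independence structure supplied by Lemma \ref{lem: ind} and the strong Markov property of \((Z_t)_{t \geq 0}\).

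For the base case, recall that \((W_t)_{t \geq 0}\) and \((Z_t)_{t \geq 0}\) are independent by Remark \ref{rem: after lem 2}(i), so in particular \(\sigma_1\), a function of \((Z_t)_{t \geq 0}\), is independent of \(W\). Given \(\sigma_1 = t\), the law of \(Y^{0, x}_{\sigma_1} = F^i(x, W)_t\) is the time-\(t\) marginal of \(P^i_x\). Fix a compact \(K \subset \mathbb{R}^d\) and let \(\mu\) denote the law of \(\sigma_1\), which is exponential with positive rate by Condition \ref{cond: abs} and in particular satisfies \(\mu(\{0\}) = 0\). Then
\[
P\bigl(Y^{0, x}_{\sigma_1} \in K\bigr) = \int_0^\infty P^i_x(X_t \in K)\,\mu(\dd t).
\]
Dominating \(\1_K\) by a function in \(C_0(\mathbb{R}^d)\) via Urysohn's lemma and invoking the Feller-Dynkin property of \((P^i_x)_{x \in \mathbb{R}^d}\) shows \(P^i_x(X_t \in K) \to 0\) as \(\|x\| \to \infty\) for every \(t > 0\); dominated convergence then closes the base case.

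For the inductive step, assume the claim for \(n - 1\) and fix a compact \(K \subset \mathbb{R}^d\). By Lemma \ref{lem: ind} the Brownian motion \((W^n_t)_{t \geq 0}\) is independent of \(\sigma(\mathcal{G}_{\tau_n}, \sigma_{n+1})\), while by the strong Markov property of the chain, conditionally on \(Z_{\tau_n} = k\) the waiting time \(\sigma_{n+1}\) is exponential with rate \(-q_{kk} > 0\) (using Condition \ref{cond: abs}) and is independent of the \(\mathcal{G}_{\tau_n}\)-measurable variable \(Y^{n-1, x}_{\sigma_n}\). Denoting this conditional exponential law by \(\rho_k\), set
\[
\phi_k(y) \triangleq \int_0^\infty P^k_y(X_t \in K)\,\rho_k(\dd t), \qquad k \in S_d.
\]
The argument from the base case, applied to \((P^k_x)_{x \in \mathbb{R}^d}\) for each fixed \(k\), shows that \(\phi_k\) is bounded by \(1\) and vanishes at infinity in \(y\). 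Conditioning on the pair \((Y^{n-1, x}_{\sigma_n}, Z_{\tau_n})\) and using the conditional independence described above yields
\[
P\bigl(Y^{n, x}_{\sigma_{n+1}} \in K\bigr) = E\bigl[\phi_{Z_{\tau_n}}(Y^{n-1, x}_{\sigma_n})\bigr].
\]

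Now fix \(\epsilon > 0\). Since the distribution of \(Z_{\tau_n}\) does not depend on \(x\), one can choose a finite set \(S' \subset S_d\) with \(P(Z_{\tau_n} \notin S') < \epsilon\), whence
\[
P\bigl(Y^{n, x}_{\sigma_{n+1}} \in K\bigr) \leq \epsilon + \sum_{k \in S'} E\bigl[\phi_k(Y^{n-1, x}_{\sigma_n})\bigr].
\]
For each \(k \in S'\) the function \(\phi_k\) is bounded and vanishes at infinity, while by induction \(\|Y^{n-1, x}_{\sigma_n}\| \to \infty\) in probability as \(\|x\| \to \infty\); splitting the expectation according to \(\|Y^{n-1, x}_{\sigma_n}\| \leq R\) versus \(\|Y^{n-1, x}_{\sigma_n}\| > R\) for \(R\) large shows \(E[\phi_k(Y^{n-1, x}_{\sigma_n})] \to 0\) as \(\|x\| \to \infty\). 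Letting \(\|x\| \to \infty\) and then \(\epsilon \downarrow 0\) finishes the induction. The only genuine obstacle is the possibly infinite alphabet \(S_d\): without the truncation to the finite set \(S'\) via tightness of the (\(x\)-independent) law of \(Z_{\tau_n}\), one would need uniform decay of \(\phi_k\) in \(k\), which the hypotheses do not supply.
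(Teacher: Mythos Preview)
Your argument is correct and follows the same inductive architecture as the paper, but the two proofs diverge in how the induction hypothesis (convergence in probability) is fed into the next step and, consequently, in how the possibly infinite alphabet \(S_d\) is handled.

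The paper does \emph{not} integrate out \(\sigma_{n+1}\). It conditions on the full triple \((Y^{n-1,x}_{\sigma_n}, Z_{\tau_n}, \sigma_{n+1})\), obtaining
\[
P\big(\|Y^{n,x}_{\sigma_{n+1}}\| \leq m\big) = \int P^{Z_{\tau_n}(\omega)}_{Y^{n-1,x}_{\sigma_n(\omega)}(\omega)}\big(\|X_{\sigma_{n+1}(\omega)}\| \leq m\big)\, P(\dd\omega),
\]
and then invokes the subsequence characterisation of convergence in probability: from any sequence \(\|x_k\| \to \infty\) it extracts a subsequence along which \(\|Y^{n-1,x}_{\sigma_n}\| \to \infty\) almost surely, so that the integrand tends to zero \(\omega\)-by-\(\omega\) (for fixed \(\omega\), the state \(Z_{\tau_n}(\omega)\) and the time \(\sigma_{n+1}(\omega) > 0\) are constants), and dominated convergence finishes. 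This neatly avoids any truncation of \(S_d\).

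Your route is more elementary in that it avoids the subsequence machinery entirely: you integrate out \(\sigma_{n+1}\) to produce the auxiliary functions \(\phi_k\), and then pass the limit using only convergence in probability plus the fact that each \(\phi_k\) vanishes at infinity. The price is the extra truncation to a finite \(S' \subset S_d\), which you correctly justify via tightness of the \(x\)-independent law of \(Z_{\tau_n}\). Both approaches are sound; the paper's is shorter, yours is arguably more transparent about where each hypothesis enters.
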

\begin{proof}
	We use induction. Because the process \((Y^{0, x}_t)_{t \geq 0}\) has law \(P^i_x\) (by the uniqueness assumption) and \((Y^{0, x}_t)_{t \geq 0}\) is independent of \(\sigma_1 = \tau_1\), we can conclude the induction base from the hypothesis (ii) of Theorem \ref{prop: equiv}. More precisely, we have for all \(m \in \mathbb{N}\)
	\[
	P(\|Y^{0, x}_{\sigma_1}\| \leq m) = \int_0^\infty P^i_x (\|X_{s}\| \leq m) P(\sigma_1 \in \dd s) \to 0
	\] 
	as \(\|x\| \to \infty\), see the proof of Proposition \ref{lem: non-existence}. Suppose now that the claim holds for \(n \in \mathbb{N}_0\). Using the Lemmata \ref{lem: representation} and \ref{lem: ind} and \cite[Theorem 5.4]{Kallenberg}, we obtain
	\begin{equation}\label{eq: indep comp}
	\begin{split}
	P(\|&Y^{n+1, x}_{\sigma_{n + 2}}\| \leq m) 
	\\&= \sum_{k = 1}^N P(\|F^k(Y^{n, x}_{\sigma_{n +1}}, W^{n +1})_{\sigma_{n + 2}}\| \leq m, Z_{\tau_{n +1}} = k)
	\\&= \sum_{k = 1}^N E \big[ P(\|F^k(Y^{n, x}_{\sigma_{n +1}}, W^{n +1})_{\sigma_{n + 2}}\| \leq m |\sigma (\mathcal{G}_{\tau_{n + 1}}, \sigma_{n + 2})) \1 \{ Z_{\tau_{n +1}} = k\} \big]
	\\&= \sum_{k = 1}^N \int P(\|F^k(Y^{n, x}_{\sigma_{n + 1} (\omega)} (\omega), W^{n +1})_{\sigma_{n + 2}(\omega)}\| \leq m) \1\{Z_{\tau_{n +1}(\omega)} (\omega) = k\} P(\dd \omega) 
	\\	&= \sum_{k = 1}^N \int P^k_{Y^{n, x}_{\sigma_{n + 1} (\omega)}(\omega)}(\|X_{\sigma_{n + 2}(\omega)}\| \leq m) \1\{Z_{\tau_{n + 1}(\omega)} (\omega) = k\} P(\dd \omega).
	\end{split}
\end{equation}
	Take \((x_k)_{k \in \mathbb{N}} \subset \mathbb{R}^d\) such that \(\|x_k\| \to \infty\) as \(k \to \infty\). 
	A well-known characterization of convergence in probability is the following: A sequence \((Z^k)_{k \in \mathbb{N}}\) converges in probability to a random variable \(Z\) if and only if each subsequence of \((Z^k)_{k \in \mathbb{N}}\) contains a further subsequence which converges almost surely to \(Z\), see, e.g., \cite[Lemma 3.2]{Kallenberg}. 
	Consequently, \((x_k)_{k \in \mathbb{N}}\) contains a subsequence \((x_{n'_k})_{k \in \mathbb{N}}\) such that \[\Big\|Y^{n, x_{n'_k}}_{\sigma_{n + 1}}\Big\|\to \infty\] almost surely as \(k \to \infty\). Due to the dominated convergence theorem, we deduce from \eqref{eq: indep comp} that \[\Big\|Y^{n + 1, x_{n'_k}}_{\sigma_{n + 2}}\Big\| \to \infty\] in probability as \(k \to \infty\). Thus, applying again the subsequence criterion, we can extract a further subsequence such that the convergence holds almost surely. Finally, applying the subsequence criterion a third time (but this time the converse direction), we conclude the claim.
\end{proof}
\begin{lemma}\label{lem: 3}
	For all \(n \in \mathbb{N}_0, t > 0\) we have \(\|Y^{n, x}_{t - \tau_n}\| \to \infty\) on \(\{\tau_n < t\}\) in probability as \(\|x\| \to \infty\).
\end{lemma}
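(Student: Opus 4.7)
The plan is to handle the base case $n = 0$ directly from the Feller-Dynkin property of $(P^i_x)_{x \in \mathbb{R}^d}$, and for $n \geq 1$ to reduce matters to Lemma \ref{lem: 2} through a conditioning argument that rests on the independence in Lemma \ref{lem: ind} and the strong existence hypothesis. No further induction on $n$ is required; Lemma \ref{lem: 2} will carry all of the growth information in $x$.

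For $n = 0$ one has $\tau_0 = 0 < t$, so $\{\tau_0 < t\}$ is the whole space, and $(Y^{0, x}_t)_{t \geq 0} = F^i(x, W)$ has law $P^i_x$ by the strong existence of $(P^i_x)_{x \in \mathbb{R}^d}$. Applying the argument at the beginning of the proof of Theorem \ref{theo: Ly} to the compact set $\{y \in \mathbb{R}^d \colon \|y\| \leq m\}$ then yields $P(\|Y^{0, x}_t\| \leq m) = P^i_x(\|X_t\| \leq m) \to 0$ as $\|x\| \to \infty$ for every $m \in \mathbb{N}$, which is exactly the claim at level $0$.

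For $n \geq 1$, fix $m \in \mathbb{N}$. On $\{Z_{\tau_n} = k\}$ the construction gives $Y^{n, x}_{t - \tau_n} = F^k(Y^{n-1, x}_{\sigma_n}, W^n)_{t - \tau_n}$, and the triple $(Y^{n-1, x}_{\sigma_n}, \tau_n, Z_{\tau_n})$ is $\mathcal{G}_{\tau_n}$-measurable while $W^n$ is independent of $\mathcal{G}_{\tau_n}$ by Lemma \ref{lem: ind}. Combining this with strong existence (so $F^k(y, W)$ has law $P^k_y$ for every deterministic $y$) and the same disintegration used in the proof of Lemma \ref{lem: 2} (cf.\ \cite[Theorem 5.4]{Kallenberg}) gives, after summing over $k \in S_d$,
\[
P(\|Y^{n, x}_{t - \tau_n}\| \leq m, \tau_n < t) = E\big[P^{Z_{\tau_n}}_{Y^{n-1, x}_{\sigma_n}}(\|X_{t - \tau_n}\| \leq m)\,\1\{\tau_n < t\}\big].
\]

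To send the right-hand side to zero as $\|x\| \to \infty$, take an arbitrary sequence $(x_j)$ with $\|x_j\| \to \infty$. Lemma \ref{lem: 2} at index $n - 1$ shows $\|Y^{n-1, x_j}_{\sigma_n}\| \to \infty$ in probability, so the subsequence criterion for convergence in probability extracts a subsequence $(x_{j_l})$ along which the convergence holds almost surely. On $\{\tau_n < t\}$ the quantity $t - \tau_n(\omega)$ is strictly positive for each $\omega$, so the Feller-Dynkin property of the families $(P^k_x)_{x \in \mathbb{R}^d}$ (applied to the closed ball of radius $m$) forces $P^{Z_{\tau_n}(\omega)}_{Y^{n-1, x_{j_l}}_{\sigma_n}(\omega)}(\|X_{t - \tau_n(\omega)}\| \leq m) \to 0$ almost surely on $\{\tau_n < t\}$. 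The integrand is bounded by $1$, so dominated convergence drives the expectation to $0$ along $(x_{j_l})$, and a second application of the subsequence criterion transfers the conclusion back to the original sequence. The main subtlety is that the evaluation time $t - \tau_n$ is random, which is why dominated convergence cannot be applied directly to the family indexed by $x$ and the detour through subsequences becomes essential.
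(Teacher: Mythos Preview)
Your proof is correct and follows essentially the same route as the paper: derive the disintegration formula by conditioning on \(\mathcal{G}_{\tau_n}\) via the independence of \(W^n\), then feed Lemma~\ref{lem: 2} into the subsequence argument and dominated convergence exactly as in that lemma's proof. You are simply more explicit than the paper, which compresses the entire second step into ``Using Lemma~\ref{lem: 2} and the argument in its proof''; one minor quibble is that the implication ``Feller--Dynkin \(\Rightarrow P^i_x(\|X_t\|\le m)\to 0\)'' is the reverse of what the opening of the proof of Theorem~\ref{theo: Ly} establishes---the Urysohn argument in the proof of Proposition~\ref{lem: non-existence} is the correct reference.
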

\begin{proof}
	Because \(\sigma(W^n_t, t \in \mathbb{R}_+)\) is independent of \(\mathcal{G}_{\tau_n}\), we show as in the proof of Lemma \ref{lem: 2} that
	\begin{align} 
	P (\|&Y^{n, x}_{t - \tau_n}\| \leq m, \tau_n < t)\nonumber \\&= \sum_{k = 1}^N \int P^k_{Y^{n-1, x}_{\sigma_n (\omega)} (\omega)} (\|X_{t - \tau_n(\omega)}\| \leq m) \1 \{\tau_n(\omega) < t\} \1 \{Z_{\tau_n(\omega)} (\omega) = k\}P(\dd \omega).\nonumber
	\end{align}
	Using Lemma \ref{lem: 2} and the argument in its proof, we see that the claim follows.
\end{proof}
\begin{lemma}\label{lem: main}
	For all compact sets \(K \subset \mathbb{R}^d\) and all \(t, \epsilon > 0\) there exists a compact set \(K^* \subset \mathbb{R}^d\) such that 
	\[
	P_{(x, i)} (X_t \in K \times S_d) <\epsilon
	\]
	for all \(x \not \in K^*\). 
\end{lemma}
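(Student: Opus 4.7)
The plan is to exploit the explicit construction of $(Y^x_t)_{t \ge 0}$ via the strong solutions $F^k$ and the jump times $\tau_n$ of $(Z_t)_{t \ge 0}$, and to leverage the escape property established in Lemma \ref{lem: 3}. Since under $P_{(x,i)}$ the coordinate process has the same law as $(Y^x_t, Z_t)_{t \ge 0}$ by Lemma \ref{lem: 1}, we have $P_{(x,i)}(X_t \in K \times S_d) = P(Y^x_t \in K)$, so the task reduces to bounding $P(Y^x_t \in K)$.

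First I would partition according to which excursion interval contains $t$. Using the definition of $(Y^x_t)_{t\ge 0}$, for every $N \in \mathbb{N}$,
\begin{align*}
P(Y^x_t \in K)
&= \sum_{n=0}^{\infty} P\bigl(Y^{n,x}_{t-\tau_n} \in K,\ \tau_n < t \le \tau_{n+1}\bigr) \\
&\le \sum_{n=0}^{N-1} P\bigl(Y^{n,x}_{t-\tau_n} \in K,\ \tau_n < t\bigr) + P(\tau_N \le t),
\end{align*}
where the tail bound uses that the events $\{\tau_n < t \le \tau_{n+1}\}$ with $n \ge N$ are disjoint and contained in $\{\tau_N \le t\}$.

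Second, since $(Z_t)_{t \ge 0}$ is a non-explosive Markov chain, $\tau_N \nearrow \infty$ almost surely, so $P(\tau_N \le t) \to 0$ as $N \to \infty$. Fix $N$ large enough that $P(\tau_N \le t) < \epsilon/2$. Third, for each fixed $n \in \{0,\dots,N-1\}$, Lemma \ref{lem: 3} ensures that $\mathbf{1}_{\{\tau_n < t\}}\,\mathbf{1}_{\{Y^{n,x}_{t-\tau_n} \in K\}}$ tends to $0$ in probability as $\|x\| \to \infty$; being bounded by $1$, its expectation also tends to $0$ by dominated convergence. Pick $R > 0$ such that for every $x$ with $\|x\| > R$ each of the $N$ terms in the finite sum is at most $\epsilon/(2N)$, so the finite sum is at most $\epsilon/2$. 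Setting $K^* \triangleq \{y \in \mathbb{R}^d \colon \|y\| \le R\}$ then yields $P_{(x,i)}(X_t \in K \times S_d) < \epsilon$ for all $x \notin K^*$.

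There is no serious obstacle in Lemma \ref{lem: main} itself; the substantive work has already been done in Lemmata \ref{lem: 2} and \ref{lem: 3}, which propagated the Feller-Dynkin hypothesis on each $(P^k_x)_{x \in \mathbb{R}^d}$ through the concatenation of excursions. Lemma \ref{lem: main} only needs to combine the finitely-many-excursion escape bound with the non-explosivity of the switching chain to control the infinite tail.
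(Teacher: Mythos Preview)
Your proof is correct and follows essentially the same route as the paper: both partition by the excursion intervals $\{\tau_n < t \le \tau_{n+1}\}$, invoke Lemma \ref{lem: 3} for each fixed $n$, and use that the jump times $\tau_n$ depend only on $(Z_t)_{t \ge 0}$ (hence not on $x$) to control the infinite sum. The paper dominates the series by the $x$-independent summable sequence $P(\tau_n < t \le \tau_{n+1})$ and applies dominated convergence directly (after passing to a Urysohn function $f \in C_0(\mathbb{R}^d)$ with $f \equiv 1$ on $K$), whereas you unwind this into an explicit finite-sum-plus-tail split; these are the same argument in slightly different packaging.
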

\begin{proof}
Let \(f \in C_0(\mathbb{R}^d)\) be such that \(0 \leq f \leq 1\) and \(f \equiv 1\) on \(K\). As before \(f\) exists due to  Urysohn's lemma for locally compact spaces. We have 
\begin{align*}
E \big[ f(Y^x_t)\big] &= \sum_{n = 0}^\infty E \big[ f(Y^x_t) \1 \{\tau_n < t \leq \tau_{n + 1}\} \big]
\\&= \sum_{n = 0}^\infty E \big[ f(Y^{n, x}_{t - \tau_n}) \1 \{\tau_n < t \leq \tau_{n + 1}\} \big] \to 0
\end{align*}
as \(\|x\| \to \infty\), which follows from Lemma \ref{lem: 3} and the dominated convergence theorem. 
Thus, the map \(x \mapsto E\big[f(Y^x_t)\big]\) is an element of \(C_0(\mathbb{R}^d)\). Finally, noting that
\[
P_{(x, i)} (X_t \in K \times S_d) \leq E \big[ f(Y^x_t) \big]
\]
 implies the claim.
\end{proof}

We are in the position to complete the proof.
Fix \(t, \epsilon > 0\) and a compact set \(K \subset S\). Recall that \(\pi_1 \colon S \to \mathbb{R}^d\) and \(\pi_2 \colon S \to S_d\) are the usual projections.
Because \((P^\star_i)_{i \in S_d}\) is Feller-Dynkin, there exists a compact set \(K^* \subset S_d\) such that 
\[
P^\star_i \big(X_t \in \pi_2(K)\big) < \epsilon
\]
for all \(i \not \in K^*\). By Lemma \ref{lem: main}, for each \(i \in K^*\) we find a compact set \(K_i^* \subset \mathbb{R}^d\) such that 
\[
P_{(x, i)} \big(X_t \in \pi_1(K) \times S_d\big) < \epsilon
\]
for all \(x \not \in K^*_i\).
Define \(\widehat{K} \triangleq \big(\bigcup_{i \in K^*} K^*_i\big) \times K^*\subset S\). Clearly, \(\widehat{K}\) is compact. We claim that
\[
P_{(x, i)} \big(X_t \in K\big) < \epsilon 
\]
for all \((x, i) \not \in \widehat{K}\). 
To see this, note that 
\begin{align*}
\widehat{K}^c 
= \bigg(\Big(\bigcap_{i \in K^*} (K^*_i)^c \Big) \times K^* \bigg) \cup \bigg( \mathbb{R}^d \times (K^*)^c \bigg).
\end{align*}
Now, if \((x, i) \in \mathbb{R}^d \times (K^*)^c\) we have 
\[
P_{(x, i)} \big(X_t \in K\big) \leq P_{(x, i)} \big(X_t \in \pi_1(K) \times \pi_2(K)\big) \leq P^\star_i \big(X_t \in \pi_2(K)\big)< \epsilon.
\]
If \((x, i) \in \big(\bigcap_{j \in K^*} (K^*_j)^c \big) \times K^*\) we have \(x \not \in K^*_i\) and hence
\[
P_{(x, i)} \big(X_t \in K\big) \leq P_{(x, i)} \big(X_t \in \pi_1(K) \times S_d\big) < \epsilon.
\]
This proves the claim, which itself implies that \((P_x)_{x \in S}\) is Feller-Dynkin, see the proof of Theorem \ref{theo: Ly}.
\qed

\subsubsection{Proof of Proposition \ref{prop: sd feller}}
	Due to Remark \ref{rem: after lem 2}, the strong Markov property follows from Proposition \ref{prop: markov}. 

It remains to prove that \((P_x)_{x \in S}\) has the \(C_b\)-Feller property. It suffices to show that \(x \mapsto P_x\) is continuous, i.e. that \(x_n \to x\) implies \(P_{x_n} \to P_x\) weakly as \(n \to \infty\). In this case, because for all \(x \in S\) and \(t \in \mathbb{R}_+\) the map \(\omega \mapsto \omega(t)\) is \(P_x\)-a.s. continuous (see \cite[Proposition 3.5.2]{EK} and note that \(P_x(\Delta X_t \not = 0) = 0\)), the continuous mapping theorem implies that \((P_x)_{x \in S}\) has the \(C_b\)-Feller property.
	The continuity of \(x \mapsto P_x\) follows from Theorem \ref{theo: limit} below.  \qed
	
	\begin{theorem}\label{theo: limit}
		For all \(n \in \mathbb{N}\) let \(b_n \colon S \to \mathbb{R}^d\) and \(a_n \colon S \to \mathbb{S}^d\) be Borel functions such that for all \(m \in \mathbb{R}_+\)
		\begin{align}\label{eq: local boundedness}
		\sup_{n \in \mathbb{N}} \sup_{\tri y\tri \leq m} \big(\|b_n(y)\| + \|a_n(y)\| \big) < \infty,
		\end{align}
		where \(\tri \cdot \tri\) denotes the Euclidean norm on \(\mathbb{R}^{d + 1}\). 
		Assume that \(b \colon S \to \mathbb{R}^d\) and \(a \colon S \to \mathbb{S}^d\) are continuous functions and that for all \(m \in \mathbb{R}_+\)
		\begin{align}\label{eq: assp conv}
		\sup_{\tri y \tri \leq m} \big( \|b (y) - b_n (y)\| + \|a (y)- a_n (y)\|\big) \to 0
		\end{align}
		as \(n \to \infty\).
		Furthermore, let \((Q_n)_{n \in \mathbb{N}}\) be a sequence of \(Q\)-matrices on \(S_d\) such that for all \(n \in \mathbb{N}\) and \(i \in S_d\) the MP \((C_n, Q_n, \Sigma_d, i)\), where \[
		C_n \triangleq \big\{f \in C_0(S_d) \colon Q_n f \in C_0(S_d)\big\},
		\] has a unique solution \(P^n_i\) such that \((P^n_i)_{i \in S_d}\) is Feller-Dynkin. Let \(C^\star \subseteq C\) be as in Remark \ref{rem: after lem 2}.
		Suppose that for all \(f \in C^\star\) there exists a sequence \((f_n)_{n \in \mathbb{N}}\) consisting of \(f_n \in C_n\) such that
		\begin{align}\label{eq: Q conv}
		\|f - f_n\|_\infty + \|Q f - Q_n f_n\|_\infty \to 0
		\end{align}
		as \(n \to \infty\).
		Finally, take \((x_n)_{n \in \mathbb{N}} \subset \mathbb{R}^d\) and \((i_n)_{n \in \mathbb{N}} \subset S_d\) such that \(x_n \to x \in \mathbb{R}^d\) and \(i_n \to i \in S_d\) as \(n \to \infty\). Set \(\mathcal{L}\) as in \eqref{gen: sd}, \(\mathcal{L}_n\) as in \eqref{gen: sd} with \(b\) replaced by \(b_n\), \(a\) replaced by \(a_n\) and \(Q\) replaced by \(Q_n\), and \(D\) as in \eqref{eq: D}.
		If \(P^n\) is a solution to the MP \((D_n, \mathcal{L}_n, \Sigma, (x_n, i_n))\), where
		\[
		D_n \triangleq \big\{f, g \colon f \in C^2_c(\mathbb{R}^d), g \in C_n\big\},
		\]
		and for all \(y \in S\) the MP \((D, \mathcal{L}, \Sigma, y)\) has a unique solution \(P_y\), then \(P^n \to P_{(x, i)}\) weakly as \(n \to \infty\).
	\end{theorem}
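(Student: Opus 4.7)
The strategy is the classical convergence scheme for martingale problem solutions: establish tightness of $(P^n)_{n \in \mathbb{N}}$, identify any subsequential weak limit as a solution to the MP $(D, \mathcal{L}, \Sigma, (x, i))$, and conclude via the assumed uniqueness of $P_{(x,i)}$.

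For the tightness step I would work on the one-point compactification $\widehat{S} \triangleq S \cup \{\partial\}$ and verify Aldous's criterion componentwise. For the $Y$-coordinate, the test functions $g^k_i$ and $g^k_{ij}$ lie in $C^2_c(\mathbb{R}^d) \subseteq D_n$, and the associated $P^n$-martingales have compensators driven by $b_n$ and $a_n$ on $\{\|Y\| \leq k\}$, which are uniformly bounded in $n$ by \eqref{eq: local boundedness}. This yields $C$-tightness of $(Y_t)_{t \geq 0}$ up to exit from balls through a standard semimartingale estimate, e.g.\ \cite[Theorem VI.4.5]{JS}. For the $Z$-coordinate, fix $f \in C^\star$ and the approximants $f_n \in C_n$ provided by \eqref{eq: Q conv}; the $P^n$-martingales $f_n(Z_t) - f_n(Z_0) - \int_0^t Q_n f_n(Z_s) ds$ have compensators bounded on finite intervals uniformly in $n$, and Aldous's criterion again yields tightness of $(Z_t)_{t \geq 0}$ in $D(\mathbb{R}_+, \widehat{S}_d)$.

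For the identification step, let $P^*$ be a subsequential weak limit along $(n_k)$. Given a test function $h \in D$, I would pass to the limit in the martingale identity $E^{P^{n_k}}[(h(X_t) - h(X_s) - \int_s^t \mathcal{L}_{n_k} h(X_r) dr) \Phi] = 0$, where $\Phi$ is a bounded continuous cylindrical functional of $(X_u)_{u \leq s}$ evaluated at continuity times of the limit law (cf.\ \cite[Proposition 3.5.2]{EK}). For $h = g^k_i$ or $g^k_{ij}$, the continuity of $b, a$ together with \eqref{eq: assp conv} gives $\mathcal{L}_{n_k} h \to \mathcal{L}h$ locally uniformly; since $\mathcal{L}h$ is continuous and bounded, the limit identity follows directly. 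For $h \in C^\star$, I would replace $h$ by $f_{n_k} \in C_{n_k}$ and exploit $\|f_{n_k} - h\|_\infty + \|Q_{n_k} f_{n_k} - Qh\|_\infty \to 0$, which yields the desired identity after standard telescoping. The initial-value condition is immediate from $(x_n, i_n) \to (x, i)$.

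The main obstacle is showing $P^*(\Sigma) = 1$ and excluding $P^*$-a.s.\ visits to $\partial$. The former follows because $\Sigma$ is closed in $\Omega$ and each $P^n(\Sigma) = 1$, while the uniform quadratic-variation bound from the $g^k_{ij}$-martingales rules out spurious jumps of $Y$ in the limit. For compact containment in $\mathbb{R}^d$, the martingale identity for $g^k_i$ combined with the quadratic-variation bound and Doob's maximal inequality controls $\sup_{s \leq t} \|Y_s\|$ uniformly in $n$; for compact containment in $S_d$, the Feller-Dynkin property of $(P^\star_i)_{i \in S_d}$ carries over via \eqref{eq: Q conv}, since the uniform sup-norm approximation of the generator forces escape probabilities of the chains under $P^n$ to mimic those of $P^\star_{i_n}$. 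Once $P^*$ is identified as a solution to $(D, \mathcal{L}, \Sigma, (x, i))$, the assumed uniqueness forces $P^* = P_{(x,i)}$; since every subsequential limit equals $P_{(x,i)}$, the full sequence $P^n$ converges weakly.
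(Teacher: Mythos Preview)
Your outline captures the standard tightness-plus-identification scheme, and the identification part is in the right spirit. However, there is a genuine gap in the compact-containment step for the \(\mathbb{R}^d\)-coordinate.

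You claim that the \(g^k_i\)-martingales together with Doob's maximal inequality control \(\sup_{s \leq t}\|Y_s\|\) uniformly in \(n\). But \(g^k_i \in C^2_c(\mathbb{R}^d)\) is bounded, so \(g^k_i(Y_t)\) is trivially bounded and carries no information about \(\|Y_t\|\) once \(\|Y_t\| > k\). More fundamentally, the hypothesis \eqref{eq: local boundedness} is only a \emph{local} bound: there is no linear-growth or Lyapunov assumption on \(b_n, a_n\), so no direct moment or Gronwall argument can produce a uniform-in-\(n\) estimate on \(P^n(\tau_m \leq t)\). Working on the one-point compactification merely relocates the difficulty to ruling out mass at \(\partial\), which needs the same missing ingredient.

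The paper closes this gap by a localization-plus-local-uniqueness bootstrap: (a) prove tightness of the stopped laws \(P^{n,m} = P^n \circ (X^1_{\cdot \wedge \tau_m}, X^2_\cdot)^{-1}\) via Kolmogorov's criterion (BDG on the SDE representation) for the first coordinate and the Kato--Trotter theorem for the chain; (b) show by a local-uniqueness extension argument (Lemma \ref{lem: lu}) that every accumulation point of \((P^{n,m})_n\) agrees with \(P_{(x,i)}\) on \(\mathcal{F}^o_{\tau_{m-1}}\); (c) use Portmanteau on the closed set \(\{\tau_{m-1} \leq t\}\) to get \(\limsup_n P^{n,m}(\tau_{m-1} \leq t) \leq P_{(x,i)}(\tau_{m-1} \leq t)\), which tends to \(0\) as \(m \to \infty\) because the limiting solution \(P_{(x,i)}\) is non-explosive by assumption. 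This last step is precisely what transfers non-explosion from the (assumed unique) limit back to the approximating sequence, and it is what your proposal is missing. Without it, tightness of \((P^n)_n\) in \(\Omega\) simply does not follow from the stated hypotheses.
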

	\begin{proof}
	We adapt the strategy from the proof of \cite[Theorem IX.3.39]{JS}.
Let us start with a clarification of our terminology: When we say that a sequence of \cadlag processes is tight, we mean that its laws are tight or, equivalently, relatively compact by Prohorov's theorem (see \cite[Theorem 3.2.2]{EK}). If we speak of an accumulation point of a sequence of processes, we refer to an accumulation point of the corresponding sequence of laws. 
	
Because of the discrete topology of \(S_d\) we can assume that \(i_n \equiv i\). For all \(n \in \mathbb{N}\) denote by \((Y^n_t)_{t \geq 0}, (Z^n_t)_{t \geq 0}\) and \((W^n_t)_{t \geq 0}\) the processes from Lemma \ref{lem: representation} corresponding to \(P^n\). 
For \(m \in \mathbb{R}_+\) we define
\begin{equation}\label{eq: def tau}
\begin{split}
\tau_{m} &\triangleq \inf\big(t \in \mathbb{R}_+ \colon \tri X_t\tri \geq m \text{ or } \tri X_{t-}\tri \geq m\big).
\end{split}
\end{equation}
We note that \(\tau_{m}\) is an \((\mathcal{F}^o_t)_{t \geq 0}\)-stopping time, see \cite[Proposition 2.1.5]{EK}.
For \(n \in \mathbb{N}\) and \(m \in \mathbb{R}_+\) we set
\[
	\tau_{n, m} \triangleq \tau_m \circ (Y^n_t, Z^n_t)_{t \geq 0}.
\]
Next, four technical lemmata follow.
\begin{lemma}\label{lem: tight1}
	For all \(m \in \mathbb{R}_+\) the sequence \(\{(Y^n_{t \wedge \tau_{n, m}}, Z^n_t)_{t \geq 0}, n \in \mathbb{N}\}\) is tight.
\end{lemma}
\begin{proof}
The Kato-Trotter theorem \cite[Theorem 17.25]{Kallenberg} implies that \(\{(Z^n_t)_{t \geq 0}, n \in \mathbb{N}\}\) is tight in \(\Sigma_d\) equipped with the Skorokhod topology. 
For all \(n \in \mathbb{N}\) the process \((Y^n_{t \wedge \tau_{n, m}})_{t \geq 0}\) has continuous paths. 
Below, we show that \(\{(Y^n_{t \wedge \tau_{n, m}})_{t \geq 0},n \in \mathbb{N}\}\) is tight in \(\Sigma_c\) equipped with the local uniform topology. In this case, \cite[Problem 4.25]{EK} 
 implies that \(\{(Y^n_{t \wedge \tau_{n, m}})_{t \geq 0},n \in \mathbb{N}\}\) is also tight in the space of \cadlag functions \(\mathbb{R}_+\to \mathbb{R}^d\) equipped with the Skorokhod topology, which we denote by \(D(\mathbb{R}_+, \mathbb{R}^d)\).

We claim that this already implies the tightness of \(\{(Y^n_{t \wedge \tau_{n, m}}, Z^n_t)_{t \geq 0}, n \in \mathbb{N}\}\). To see this, we use the characterization of tightness given in \cite[Corollary 3.7.4]{EK}. Let us recall it as a fact:
\begin{fact}\label{fact}
	Let \((E, r)\) be a Polish space.
	A sequence \((\mu^n)_{n \in \mathbb{N}}\) of Borel probability measures on \(D(\mathbb{R}_+, E)\) is tight if and only if the following hold:
	\begin{enumerate}
		\item[\textup{(a)}]
		For all \(t \in \mathbb{Q}_+\) and \(\epsilon > 0\) there exists a compact set \(C(t, \epsilon) \subseteq E\) such that
		\[
		\limsup_{n \to \infty} \mu^n(X_t \not \in C(t, \epsilon))\leq \epsilon. \]
		\item[\textup{(b)}]
		For all \(\epsilon > 0\) and \(t > 0\) there exists a \(\delta > 0\) such that 
		\[
		\limsup_{n \to \infty} \mu^n(w'(X, \delta, t) \geq \epsilon) \leq \epsilon, 
		\]
		where
		\begin{align*}
		w'(\alpha, \theta, t) &\triangleq \inf_{\{t_i\}}  \max_{i} \sup_{u, v \in [t_{i-1}, t_i)} r(\alpha(u), \alpha(v)),
		\end{align*}
		with \(\{t_i\}\) ranging over all partitions of the form \(0 = t_0 < t_1 < \dots < t_{n -1} < t_n \leq t\) with \(\min_{1 \leq i < n}(t_i - t_{i-1}) \geq \theta\) and \(n \geq 1\).
	\end{enumerate}
\end{fact}
We equip \(S\) with the metric \(r((x, i), (y, j)) \triangleq \|x - y\| + \1\{i \not = j\}\), which generates the product topology on \(S\).
Let us first check that \(\{(Y^n_{t \wedge \tau_{n, m}}, Z^n_t)_{t \geq 0}, n \in \mathbb{N}\}\) satisfies Fact \ref{fact} (a). Fix \(t \in \mathbb{Q}_+\) and \(\epsilon > 0\). Using Fact \ref{fact}, the tightness of \(\{(Z^n_s)_{s \geq 0}, n \in \mathbb{N}\}\) in \(\Sigma_d\) and \(\{(Y^n_{s \wedge \tau_{n,m}})_{s\geq 0}, n \in \mathbb{N}\}\) in \(D(\mathbb{R}_+, \mathbb{R}^d)\) implies that there exists a compact set \(C_1(t, \epsilon) \subset S_d\) and a compact set \(C_2(t, \epsilon) \subset \mathbb{R}^d\) such that 
\begin{align*}
\limsup_{n \to \infty} P(Z^n_{t} \not \in C_1(t, \epsilon)) &\leq \tfrac{\epsilon}{2},\\
\limsup_{n \to \infty} P(Y^n_{t \wedge \tau_{n,m}} \not \in C_2(t, \epsilon)) &\leq \tfrac{\epsilon}{2}.
\end{align*}
The set \(K(t, \epsilon) \triangleq C_2(t, \epsilon) \times C_1(t, \epsilon)\subset S\) is also compact and we have 
\begin{align*}
\limsup_{n \to \infty} P&((Y^n_{t \wedge \tau_{n,m}}, Z^n_t) \not \in K(t, \epsilon)) \\&\leq \limsup_{n \to \infty} P(Y^n_{t \wedge \tau_{n,m}} \not \in C_2(t, \epsilon))  + \limsup_{n \to \infty} P(Z^n_{t} \not \in C_1(t, \epsilon))
\\&\leq \tfrac{\epsilon}{2} + \tfrac{\epsilon}{2} = \epsilon.
\end{align*}
In other words, \(\{(Y^n_{t \wedge \tau_{n,m}}, Z^n_t)_{t \geq 0}, n \in \mathbb{N}\}\) satisfies Fact \ref{fact} (a). Next, we explain that it also satisfies Fact \ref{fact} (b).
We claim that the continuous paths of \((Y^n_t)_{t \geq 0}\) imply that up to a null set
\begin{align}\label{eq: ineq JS}
w'((Y^n_{s \wedge \tau_{n, m}}, Z^n_s)_{s \geq 0}, \theta, t) \leq 2 w'((Y^n_{s\wedge \tau_{n,m}}, 0)_{s \geq 0},2 \theta, t) + w'((0, Z^n_s)_{s \geq 0}, \theta, t).
\end{align}
To see this, take \((\alpha, \omega)\in \Sigma_c \times \Sigma_d\). Let \(\{t_i\}\) be a partition of the form \(0 = t_0 < t_1 < \dots < t_{n-1} < t_n \leq t\) with \(\min_{1 \leq i \leq n} (t_i - t_{i-1}) \geq \theta\). By adding points if necessary, we can assume that \(\max_{1 \leq i \leq n} (t_i - t_{i - 1}) \leq 2 \theta\). In this case, we have 
\begin{align*}
\sup_{u, v \in [t_{i-1}, t_i)} &r((\alpha(u), 0), (\alpha(v), 0)) \\&\leq \sup \big\{ r((\alpha(u), 0), (\alpha(v), 0)) \colon 0 \leq u, v \leq t, |u - v| \leq 2 \theta\big\}.
\end{align*}
Due to \cite[Lemma 15.3]{HWY}, we have 
\[
\sup \big\{ r((\alpha(u),0), (\alpha(v), 0)) \colon 0 \leq u, v \leq t, |u - v| \leq 2 \theta\big\} \leq 2 w'( (\alpha, 0), 2 \theta, t).
\]
Therefore, we conclude that 
\[
w'((\alpha, \omega), \theta, t) \leq 2w'((\alpha, 0), 2 \theta, t) + w'((0, \omega),  \theta, t), 
\]
which implies \eqref{eq: ineq JS}.
Fix \(\epsilon > 0\) and \(t > 0\) and let \(\delta > 0\) be such that 
\begin{align*}
\limsup_{n \to \infty} P\big(w'((Y^n_{s \wedge \tau_{n,m}}, 0)_{s \geq 0}, 2\delta, t) \geq \tfrac{\epsilon}{4}\big) &\leq \tfrac{\epsilon}{4}, \\
\limsup_{n \to \infty} P\big(w'((0, Z^n_s)_{s \geq 0}, \delta, t) \geq \tfrac{\epsilon}{2}\big) &\leq \tfrac{\epsilon}{2}.
\end{align*}
This \(\delta\) exists due to Fact \ref{fact} (b) and the fact that \(w'\) is increasing in \(\delta\).
Note that for two non-negative random variables \(V\) and \(U\) we have 
\begin{align*}
P(U + V \geq 2\epsilon) 
&\leq P(U \geq \epsilon) + P(V \geq \epsilon).
\end{align*}
Hence, we deduce from \eqref{eq: ineq JS} that
\begin{align*}
\limsup_{n \to \infty} P(w'((Y^n_{s \wedge \tau_{n,m}}, Z^n_s)_{s \geq 0}, \delta, t) \geq \epsilon ) &\leq \tfrac{3 \epsilon}{4} \leq \epsilon.
\end{align*}
We conclude from Fact \ref{fact} that \(\{(Y^n_{t \wedge \tau_{n,m}}, Z^n_t)_{t \geq 0}, n \in \mathbb{N}\}\) is tight.

	It remains to show that \(\{(Y^n_{t \wedge \tau_{n, m}})_{t \geq 0},n \in \mathbb{N}\}\) is tight in \(\Sigma_c\). Let \(p > 2\) and recall the inequalities
	\begin{align}\label{eq: easy}
	\big(v + u\big)^p \leq 2^p \big(v^p + u^p\big),\quad v, u \geq 0,\qquad \bigg\| \int_0^t f(s)\dd s \bigg\| \leq \int_0^t \|f(s)\|\dd s.
	\end{align}
	Let \(T \in \mathbb{R}_+\) and \(s < t \leq T.\) We  write \(x \preceq y\) whenever \(x \leq \textup{const. } y\) where the constant only depends on \(T, p, m\) and \(\eqref{eq: local boundedness}\). We deduce from the triangle inequality, \eqref{eq: easy} and \cite[Remark 3.3.30]{KaraShre} (i.e. a multidimensional version of the Burkholder-Davis-Gundy inequality) that
	\begin{equation}\label{eq: tight bound}
	\begin{split}
	E \big[ &\|Y^n_{t \wedge \tau_{n,m}} - Y^n_{s \wedge \tau_{n, m}}\|^p \big] \\&= E \bigg[ \bigg\|\int_{s \wedge \tau_{n, m}}^{t \wedge \tau_{n, m}} b_n(Y^n_r, Z^n_r) \dd r + \int_{s \wedge \tau_{n, m}}^{t \wedge \tau_{n, m}} a_{n}^{\frac{1}{2}}(Y^n_r, Z^n_r) \dd W^n_r \bigg\|^p \bigg]
	\\&\leq 2^p E \bigg[ \bigg\|\int_{s \wedge \tau_{n, m}}^{t \wedge \tau_{n, m}} b_n(Y^n_r, Z^n_r) \dd r \bigg\|^p \bigg] + 2^p E\bigg[ \bigg\|\int_{s \wedge \tau_{n, m}}^{t \wedge \tau_{n, m}} a_{n}^{\frac{1}{2}}(Y^n_r, Z^n_r) \dd W^n_r \bigg\|^p \bigg]
	\\&\preceq  E \bigg[ \bigg(\int_{s \wedge \tau_{n, m}}^{t \wedge \tau_{n, m}} \|b_n(Y^n_r, Z^n_r)\| \dd r \bigg)^p \bigg] + E\bigg[ \bigg(\int_{s \wedge \tau_{n, m}}^{t \wedge \tau_{n, m}} \|a_n(Y^n_r, Z^n_r)\| \dd r \bigg)^\frac{p}{2} \bigg]
	\\&\preceq \big( |t - s|^p +  |t - s|^\frac{p}{2} \big)
	\\&\preceq |t - s|^\frac{p}{2}. 
	\end{split}\end{equation}
Furthermore, we have
	\[
	\sup_{n \in \mathbb{N}} E \big[ \| Y^n_0\|\big] = \sup_{n \in \mathbb{N}} \|x_n\| < \infty, 
	\]
	because convergent sequences are bounded.
	Consequently, \cite[Problem 2.4.11, Remark 2.4.13]{KaraShre} (i.e. Kolmogorov's tightness criterion) imply that \(\{(Y^n_{t \wedge \tau_{n, m}})_{t \geq 0}, n \in \mathbb{N}\}\) is tight in \(\Sigma_c\). This completes the proof.
\end{proof}
The following lemma is a version of Lemma \ref{lem: loc pathwise uniqueness} for uniqueness in law instead of pathwise uniqueness.
\begin{lemma}\label{lem: lu}
	Let \(\rho\) be an \((\mathcal{F}^o_t)_{t \geq 0}\)-stopping time and suppose that \(P\) is a probability measure on \((\Omega, \mathcal{F})\) such that \(P (X_0 = x) = P(\Sigma) = 1\) and 
	\begin{align}\label{eq: Mf2}
	M^f_{t \wedge \rho} = f(X_{t \wedge \rho}) - f(X_0) - \int_0^{t \wedge \rho} \mathcal{L} f(X_s)\dd s, \quad t \in \mathbb{R}_+,
	\end{align}
	is a \(P\)-martingale for all \(f \in D\). Then, \(P = P_x\) on \(\mathcal{F}^o_\rho\).
\end{lemma}
\begin{proof}
The claim of this lemma is closely related to the concept of local uniqueness as introduced in \cite{JS} and it can be proven with the strategy from \cite[Theorem III.2.40]{JS}. To each \(G \in \mathcal{F}\) we can associate a (not necessarily unique) set \(G' \in \mathcal{F}^o_\rho \otimes \mathcal{F}\) such that 
\[
G \cap \{\rho < \infty\} = \big\{\omega \in \Omega \colon \rho(\omega) < \infty, (\omega, \theta_{\rho(\omega)}\omega) \in G'\big\},
\]
see \cite[Lemma III.2.44]{JS}. Now, set
\[
Q (G) \triangleq P(G \cap \{\rho = \infty\}) + \iint  \1_{\{\rho(\omega) < \infty\}} \1_{G'} (\omega, \omega^*) P_{\omega(\rho(\omega))} (\dd \omega^*)  P (\dd \omega).
\]
Due to \cite[Lemma III.2.47]{JS}, \(Q\) is a probability measure on \((\Omega, \mathcal{F})\). 
For \(G \in \mathcal{F}^o_0\) we can choose \(G' = G \times \Omega\). Consequently, we have
\(
Q(X_0 = x) = P(X_0 = x) = 1. 
\)
Set 
\[
\Sigma^* \triangleq \big\{ \omega \in \Omega \colon (\omega_{t \wedge \rho(\omega)})_{t \geq 0} \in \Sigma \big\} \supseteq \Sigma
\]
and note that 
\[
\Sigma \cap \{\rho< \infty\} = \big\{ \omega \in \Omega \colon \rho (\omega) < \infty, (\omega, \theta_{\rho(\omega)} \omega) \in \Sigma^* \times \Sigma\big\}.
\]
Consequently, we have 
\begin{align*}
Q(\Sigma) &= P(\Sigma \cap \{\rho = \infty \}) +\int \1_{\{\rho (\omega) < \infty\}} \1_{\Sigma^*}(\omega) P_{\omega(\rho(\omega))}(\Sigma) P(\dd \omega) 
\\&= P (\Sigma\cap \{\rho = \infty\}) + P (\Sigma^* \cap \{\rho < \infty\}) \geq P(\Sigma)
= 1.
\end{align*}
Fix a bounded \((\mathcal{F}^o_t)_{t \geq 0}\)-stopping time \(\psi\).
For \(\omega, \alpha \in \Omega\) and \(t \in \mathbb{R}_+\) we set
\[
z(\omega, \alpha) (t) \triangleq \begin{cases} \omega(t),&t < \rho(\omega),\\
\alpha(t - \rho(\omega)),&t \geq \rho(\omega),\end{cases}
\]
and
\[
V(\omega, \alpha) \triangleq \begin{cases} \big(\psi \vee \rho - \rho\big) (z(\omega, \alpha)), &\alpha (0) = \omega (\rho(\omega)),\\
0,&\text{otherwise}.\end{cases}
\]
Due to \cite[Theorem IV.103]{DellacherieMeyer78} the map
\(V\) is \(\mathcal{F}^o_\rho \otimes \mathcal{F}\)-measurable and \(V(\omega, \cdot)\) is an \((\mathcal{F}^o_t)_{t \geq 0}\)-stopping time for all \(\omega \in \Omega\). Furthermore, it is evident from the definition that
\[
\psi (\omega) \vee \rho(\omega) = \rho(\omega) + V(\omega, \theta_{\rho(\omega)} \omega)
\]
for \(\omega \in \Omega\). 
We take \(f \in D\) and note that for \(\omega \in \{\rho < \psi\}\)
\begin{align*}
M^f_{V(\omega, \theta_{\rho(\omega)} \omega)}(\theta_{\rho(\omega)}\omega) &= M^f_{\psi(\omega) - \rho(\omega)} (\theta_{\rho(\omega)} \omega )  
= M^f_{\psi(\omega)}(\omega) - M^f_{\rho(\omega)}(\omega).
\end{align*}
Because \((M^f_{t \wedge \rho})_{t \geq 0}\) is a \(P\)-martingale and \(\psi\) is bounded, the optional stopping theorem yields that
\[
E^Q \big[ M^f_{\rho \wedge \psi}\big] = E^{P} \big[M^f_{\rho \wedge \psi}\big] = 0.
\]
Therefore, we have 
\begin{align*}
E^Q \big[ M^f_{\psi} \big] &= E^Q \big[ M^f_{\psi} - M^f_{\rho \wedge \psi}\big] 
\\&= E^Q \big[ \big(M^f_{\psi} - M^f_{\rho}\big) \1_{\{\rho < \psi\}}\big]
\\&= E^Q\big[M^f_{V (\cdot, \theta_\rho)} ( \theta_\rho ) \1_{\{\rho < \psi\}}\big]
\\&=\int  E^{P_{\omega(\rho(\omega))}} \big[ M^f_{V (\omega, \cdot)}\big] \1_{\{\rho(\omega) < \psi(\omega)\}} P(\dd \omega)= 0,
\end{align*}
again due to the optional stopping theorem (recall that \(V(\omega, \cdot)\) is bounded and that \((M^f_{t})_{t \geq 0}\) is a \(P_y\)-martingale for all \(y \in S\)). We conclude from \cite[Proposition II.1.4]{RY} and the downwards theorem (\cite[Theorem II.51.1]{RW1}) that \((M^f_{t})_{t \geq 0}\) is a \(Q\)-martingale, which
 implies that \(Q\) solves the MP \((D, \mathcal{L}, \Sigma, x)\).
The uniqueness assumption yields that \(Q = P_x\). Because also for \(G \in \mathcal{F}^o_\rho\) we can choose \(G' = G \times \Omega\), we obtain that 
\[
P_x(G) = Q(G)  = P(G).
\]
This finishes the proof.
\end{proof}
\begin{lemma}\label{lem: uni1}
For all \(m \in \mathbb{N}\), all accumulation points of \(\{(Y^n_{t \wedge \tau_{n, m}}, Z^n_t)_{t \geq 0},n \in \mathbb{N}\}\) coincide with \(P_{(x, i)}\) on \(\mathcal{F}^o_{\tau_{m-1}}\).
\end{lemma}
\begin{proof}
	We recall some continuity properties of functions on \(\Omega\). For \(\omega \in \Omega\), define
\begin{align*}
J(\omega) &\triangleq \big\{t > 0 \colon \omega (t) \not = \omega(t-)\big\},\\
V(\omega) &\triangleq \big\{k > 0 \colon \tau_k (\omega) < \tau_{k + }(\omega)\big\},\\
V'(\omega)&\triangleq \big\{u > 0 \colon \omega(\tau_u(\omega)) \not = \omega(\tau_u(\omega)-) \text{ and } \tri\omega(\tau_u(\omega)-)\tri = u\big\},
\end{align*}
which are countable sets, see \cite[Lemma VI.2.10]{JS}. The map \(\omega \mapsto \omega(t)\) is continuous at \(\omega\) whenever \(t \not \in J(\omega)\), see \cite[Proposition 3.5.2]{EK}, and the map \(\omega \mapsto \tau_m (\omega)\) is continuous at \(\omega\) whenever \(m \not \in V(\omega)\), see \cite[Problem 13, p. 151]{EK} and \cite[Proposition VI.2.11]{JS}. Furthermore, the map \(\omega \mapsto \omega (\cdot \wedge \tau_m(\omega))\) is continuous at \(\omega\) whenever \(m \not \in V(\omega) \cup V'(\omega)\), see \cite[Problem 13, p. 151]{EK} and \cite[Proposition VI.2.12]{JS}.

Fix \(f \in D\) and let \(Q^m\) be an accumulation point of \(\{(Y^n_{t \wedge \tau_{n, m}}, Z^n_t)_{t \geq 0},n \in \mathbb{N}\}\). Without loss of generality we assume that the law of \((Y^n_{t \wedge \tau_{n, m}}, Z^n_t)_{t \geq 0}\) converges weakly to \(Q^m\) as \(n \to \infty\). The set 
\[
F \triangleq \big\{ t > 0 \colon Q^m (t \in V \cup V') > 0 \big\}
\]
is countable, see the proof of \cite[Proposition IX.1.17]{JS}. Thus, we find a \(t_m \in [m-1, m]\) such that \(t_m \not \in F\). 
Set 
\[
U \triangleq \left\{t \in \mathbb{R}_+ \colon Q^m\big(t \in J(X_{\cdot \wedge \tau_{t_m}})\big) = 0\right\}.
\]
By \cite[Lemma 3.7.7]{EK}, the complement of \(U\) in \(\mathbb{R}_+\) is countable. Thus, \(U\) is dense in \(\mathbb{R}_+\). 
Next, we explain that for all \(z \in \mathbb{R}_+\) the map
\[
\omega \mapsto  I_{t \wedge \tau_z(\omega)}(\omega) \triangleq \int_0^{t \wedge \tau_{z}(\omega)} \mathcal{L}f (\omega(s))\dd s
\]
is continuous at all continuity points of \(\omega \mapsto \tau_{z} (\omega)\). 
Let \((\omega_n)_{n \in \mathbb{N}} \subset\Omega\) and \(\omega \in \Omega\) be such that \(\omega_n \to \omega\) and \(\tau_{z}(\omega_n) \to \tau_{z}(\omega)\) as \(n \to \infty\). 
We deduce from \cite[Proposition 3.5.2]{EK}, the fact that \(J (\omega)\) is countable, the dominated convergence theorem and the continuity of \(x \mapsto \mathcal{L} f (x)\), which is due to the hypothesis that \(b\) and \(a\) are continuous, that 
\[
\big|  I_{t \wedge \tau_z(\omega)} (\omega) - I_{t \wedge \tau_z(\omega)}(\omega_{n}) \big| \to 0
\]
as \(n \to \infty\).
We obtain 
\begin{align*}
\big| I_{t \wedge \tau_z(\omega)} (\omega) &- I_{t \wedge \tau_z(\omega_{n})}(\omega_{n})\big| \\&\leq \big|  I_{t \wedge \tau_z(\omega)} (\omega) - I_{t \wedge \tau_z(\omega)}(\omega_{n}) \big| + \big| I_{t \wedge \tau_z(\omega)}(\omega_{n}) - I_{t \wedge \tau_z(\omega_{n})}(\omega_{n})\big|
\\&\leq \big|  I_{t \wedge \tau_z(\omega)} (\omega) - I_{t \wedge \tau_z(\omega)}(\omega_{n}) \big| +  \big\| \mathcal{L} f\big\|_\infty\ \big| t \wedge \tau_z(\omega) - t \wedge \tau_z(\omega_{n})\big| \to 0
\end{align*}
as \(n \to \infty\),
where we use that \(\tau_z(\omega_n) \to \tau_z(\omega)\) as \(n \to \infty\).
It follows that for each \(t \in U\) there exists a \(Q^m\)-null set \(N_t\) such that the map 
	\begin{align}\label{eq: M f cont}
	\omega \mapsto M^f_{t \wedge \tau_{t_m}(\omega)}(\omega) = f(\omega(t \wedge \tau_{t_m}(\omega))) - f(\omega(0))-  \int_0^{t \wedge \tau_{t_m}(\omega)} \mathcal{L} f(\omega(s))\dd s
	\end{align}
	is continuous at all \(\omega \not \in N_t\). 
	For a moment we fix \(t \in U\).
	Suppose that \(f \in D\) is independent of the \(\mathbb{R}^d\)-coordinate (i.e. \(f \in C^\star\)) and let \((f_n)_{n \in \mathbb{N}}\) be a sequence of functions \(f_n \in C_n\) such that \eqref{eq: Q conv} holds. Define \((M^{f, n}_t)_{t \geq 0}\) as in \eqref{eq: Mf2} with \(f\) replaced by \(f_n\) and  \(\mathcal{L}\) replaced by \(\mathcal{L}_n\). Furthermore, fix \(\omega \not\in N_t\) and let \((\omega_n)_{n \in \mathbb{N}} \subset \Omega\) be a sequence such that \(\omega_n \to \omega\) as \(n \to \infty\). Then, for any bounded continuous function \(v \colon \Omega \to \mathbb{R}\) we have 
	\begin{equation} \label{eq: cont map theo}
	\begin{split}
	\big|M^f_{t \wedge \tau_{t_m}(\omega)} &(\omega) v(\omega) - M^{f, n}_{t \wedge \tau_{t_m}(\omega_n)} (\omega_n) v(\omega_n)\big| \\&\leq \big|M^f_{t \wedge \tau_{t_m}(\omega)} (\omega) v(\omega)- M^{f}_{t \wedge \tau_{t_m}(\omega_n)} (\omega_n) v(\omega_n) \big| \\&\qquad\quad+ \|v\|_\infty \big|M^f_{t \wedge \tau_{t_m}(\omega_n)} (\omega_n) - M^{f, n}_{t \wedge \tau_{t_m}(\omega_n)} (\omega_n) \big| \to 0
	\end{split}
	\end{equation}
	as \(n \to \infty\), where the first term converges to zero because of the continuity of \eqref{eq: M f cont}
	at \(\omega\) and the second term converges to zero because 
	\begin{align*}
	\big|M^f_{t \wedge \tau_{t_m}(\omega_n)} &(\omega_n) - M^{f, n}_{t \wedge \tau_{t_m}(\omega_n)} (\omega_n) \big| \leq 2  \|f - f_n\|_\infty + t \|Q f - Q_n f_n\|_\infty \to 0
	\end{align*}
	as \(n \to  \infty\) by \eqref{eq: Q conv}. 
	Similarly, \eqref{eq: cont map theo} holds if \(f \in D\) depends only on the \(\mathbb{R}^d\)-coordinate provided \((M^{f, n}_t)_{t \geq 0}\) is defined as in \eqref{eq: Mf2} with \(\mathcal{L}\) replaced by \(\mathcal{L}_n\). In this case, the second term in \eqref{eq: cont map theo} converges to zero because 
	\begin{align*}
	\big|&M^f_{t \wedge \tau_{t_m}(\omega_n)} (\omega_n) - M^{f, n}_{t \wedge \tau_{t_m}(\omega_n)} (\omega_n) \big| \\&\quad
	\leq \textup{const. } t \sup_{\tri y \tri \leq m} \Big(\|b(y) - b_n(y)\| + \| a(y) - a_n(y)\| \Big) \to 0
	\end{align*}
	as \(n \to \infty\), due to \eqref{eq: assp conv}. 
	We conclude from \cite[Theorem 3.27]{Kallenberg} that for all \(f \in D\) and \(t \in U\)
	\begin{align}\label{eq: conv mart imp}
	E^{P^{n, m}} \Big[ M^{f, n}_{t \wedge \tau_{t_m}} v \Big] \to E^{Q^m} \Big[M^f_{t \wedge \tau_{t_m}} v\Big]
	\end{align}
	as \(n \to \infty\), where \(P^{n, m}\) denotes the law of \((Y^n_{t \wedge \tau_{n, m}}, Z^n_t)_{t \geq 0}\).
	
	Fix \(s < t\). Because \(U\) is dense in \(\mathbb{R}_+\), we find a sequence \((z_n)_{n \in \mathbb{N}} \subset U\) such that \(z_n \searrow t\) as \(n \to \infty\) and a sequence \((u_n)_{n \in \mathbb{N}} \subset U\) such that \(u_n \searrow s\) as \(n \to \infty\). W.l.o.g. we can assume that \(u_n \leq z_n\) for all \(n \in \mathbb{N}\).
Let \(v \colon \Omega \to \mathbb{R}\) be continuous, bounded and \(\mathcal{F}_s\)-measurable. 
Using the dominated convergence theorem, the right-continuity of \((X_t)_{t \geq 0}\) and \eqref{eq: conv mart imp}, we obtain 
\begin{equation}\label{eq: weak use}
\begin{split}
E^{Q^m} \big[ M^f_{t \wedge \tau_{t_m}} v \big]  
= \lim_{k \to \infty} E^{Q^m} \big[ M^f_{z_k \wedge \tau_{t_m}} v \big]
	= \lim_{k \to \infty}  \lim_{n \to \infty} E^{P^{n, m}} \big[ M^{f, n}_{z_k \wedge \tau_{t_m}} v \big].
	\end{split}		
\end{equation}
The process \((M^{f, n}_{q \wedge \tau_{t_m}})_{q \geq 0}\)
is a \(P^{n, m}\)-martingale. To see this, note that \[\tau_{t_m} \circ (Y^n_{s \wedge \tau_{n, m}}, Z^n_s)_{s \geq 0} = \tau_{n, t_m},\] see \cite[Lemma III.2.43]{JS}, and recall that martingales are stable under stopping.
Consequently, using again \eqref{eq: conv mart imp} and the dominated convergence theorem, we conclude from \eqref{eq: weak use} that 
\begin{align*}
E^{Q^m} \big[ M^f_{t \wedge \tau_{t_m}} v \big] = \lim_{k \to \infty}  \lim_{n \to \infty} E^{P^{n, m}} \big[ M^{f, n}_{u_k \wedge \tau_{t_m}} v \big]
=  E^{Q^m} \big[ M^{f}_{s \wedge \tau_{t_m}} v \big].
\end{align*}
Recall that \(s < t\) and \(v\) were arbitrary. 

We claim that this already implies that \((M^f_{q \wedge \tau_{t_m}})_{q \geq 0}\) is a \(Q^m\)-martingale.
Take \(g \in C_b(S)\) and let \((m_k)_{k \in \mathbb{N}} \subset (0, \infty)\) be such that \(m_k \searrow 0\) as \(k \to \infty\). We set
\[
g^k (q) \triangleq \frac{1}{m_k} \int_{q}^{q+ m_k} g(X_r)\dd r,\quad k \in \mathbb{N}, q \in \mathbb{R}_+,
\]
and note that \(g^k (q) \colon \Omega \to \mathbb{R}\) is continuous, bounded and \(\mathcal{F}^o_{q + m
_k}\)-measurable and that
\(g^k(q) \to g(X_q)\) as \(k \to \infty\). Thus, using an approximation argument, we can deduce from the fact that \(E^{Q^m} \big[ M^f_{t \wedge \tau_{t_m}} v \big] = E^{Q^m} \big[ M^f_{s \wedge \tau_{t_m}} v \big]\) holds for all \(s < t\) and all continuous, bounded and \(\mathcal{F}_s\)-measurable \(v\) that
\[
E^{Q^m} \Big[ M^f_{t \wedge \tau_{t_m}} \prod_{i = 1}^l g_i(X_{q_i}) \Big]
=  E^{Q^m} \Big[ M^{f}_{s \wedge \tau_{t_m}} \prod_{i = 1}^l g_i(X_{q_i}) \Big],
\]
for all \(s < t\), \(l \in \mathbb{N}\), \(g_1, \dots, g_l\in C_b(S)\) and \(q_1, \dots, q_l \in [0, s]\).
Using a monotone class argument and the downwards theorem shows that \((M^f_{t \wedge \tau_{t_m}})_{t \geq 0}\) is a \(Q_m\)-martingale.

Because \(\omega \mapsto \omega(0)\) is continuous, we have \(Q^m(X_0 = (x, i)) = 1\) due to the continuous mapping theorem.
Due to \cite[Problem 4.25]{EK} the set \(\Sigma = \Sigma_c\times \Sigma_d\) is a closed set in the product Skorokhod topology on \(\Omega = D(\mathbb{R}_+, \mathbb{R}^d) \times \Sigma_d\), and \cite[Proposition 3.5.3]{EK} implies that \(\Sigma\) is closed in \(\Omega\), too.
Thus, by the Portmanteau theorem, we have \(Q^m (\Sigma) = 1\).
It follows from Lemma \ref{lem: lu} that \(Q^m\) coincides with \(P_{(x, i)}\) on \(\mathcal{F}_{\tau_{t_m}}^o\) and thus also on \(\mathcal{F}^o_{\tau_{m-1}}\), because \(t_m \geq m -1\) implies \(\tau_{t_m} \geq \tau_{m-1}\). This completes the proof.
\end{proof}
\begin{lemma}\label{lem: tight2}
	The sequence \(\{(Y^n_t, Z^n_t)_{t \geq 0}, n \in \mathbb{N}\}\) is tight. 
\end{lemma}
\begin{proof}
We use again Fact \ref{fact}.
As in the proof of the previous lemma, let \(P^{n, m}\) be the law of \((Y^n_{t \wedge \tau_{n, m}}, Z^n_t)_{t \geq 0}\) and \(P^n\) be the law of \((Y^n_t, Z^n_t)_{t \geq 0}\). We fix \(t \in \mathbb{R}_+\).
Due to \cite[Problem 13, p. 151]{EK} and \cite[Lemma 15.20]{HWY}, the set 
\(
\{\tau_{m-1} \leq t\}
\)
is closed. Moreover, \(\{\tau_{m-1} \leq t\} \in \mathcal{F}^o_{\tau_{m-1}}\), because \(\tau_{m -1}\) is an \((\mathcal{F}^o_t)_{t \geq 0}\)-stopping time. We deduce from the Portmanteau theorem and Lemma \ref{lem: uni1} that 
\begin{equation}\label{port}\begin{split}
\limsup_{n \to \infty}\ P^{n, m} (\tau_{m-1} \leq t) \leq P_{(x, i)} (\tau_{m-1} \leq t).
\end{split}\end{equation}
Fix \(\epsilon > 0\).
Since \(P_{(x, i)}(\tau_{m-1} \leq t) \searrow 0\) as \(m \to \infty\), we find an \(m^o \in \mathbb{N}_{\geq 2}\) such that 
\begin{align}\label{eq: P bound}
P_{(x, i)}(\tau_{m^o -1} \leq t) \leq \tfrac{\epsilon}{2}.
\end{align}
Because \((P^{n, m^o-1})_{n \in \mathbb{N}}\) is tight due to Lemma \ref{lem: tight1}, we deduce from Fact \ref{fact} that there exists a compact set \(C(t, \epsilon) \subseteq S\) such that 
\begin{align}\label{eq: t imp}
\limsup_{n \to \infty} P^{n, m^o - 1} (X_{t} \not \in C(t, \epsilon)) \leq \tfrac{\epsilon}{2}.
\end{align}
In view of \cite[Lemma III.2.43]{JS} we obtain
\begin{align*}
P^n (X_t \not \in C(t, \epsilon)) & = P^{n}(X_{t} \not \in C(t, \epsilon), \tau_{m^o-1} > t) + P^n(X_t \not \in C(t, \epsilon), \tau_{m^o-1} \leq t)
\\&\leq P^{n, m^o-1} (X_{t} \not \in C(t, \epsilon)) + P^{n, m^o}(\tau_{m^o-1} \leq t).
\end{align*}
From this, \eqref{port}, \eqref{eq: P bound} and \eqref{eq: t imp}, we deduce that 
\[
\limsup_{n \to \infty} P^n(X_t \not \in C(t, \epsilon)) \leq \epsilon.
\]
This proves that the sequence \((P^n)_{n \in \mathbb{N}}\) satisfies (a) in Fact \ref{fact}. 

Next, we show that \((P^n)_{n \in \mathbb{N}}\) satisfies (b) in Fact \ref{fact}. Let \(\epsilon, t\) and \(m^o\) be as before. Because \((P^{n, m^o-1})_{n \in \mathbb{N}}\) is tight due to Lemma \ref{lem: tight1} there exists a \(\delta > 0\) such that 
\begin{align}\label{eq: t imp2}
\limsup_{n \to \infty} P^{n, m^o-1} \left(w' ((X_{s})_{s \geq 0}, \delta, t) \geq \epsilon\right) \leq \tfrac{\epsilon}{2}.
\end{align}
Thus, similar as above, using \eqref{port}, \eqref{eq: P bound} and \eqref{eq: t imp2}, we obtain 
\begin{align*}
\limsup_{n \to \infty}&\ P^n (w'((X_s)_{s \geq 0}, \delta, t) \geq \epsilon) \\&\leq \limsup_{n \to \infty}P^{n, m^o-1} (w'((X_{s})_{s \geq 0}, \delta, t) \geq \epsilon) + \limsup_{n \to \infty} P^{n, m^o} (\tau_{m^o-1} \leq t) \\&\leq \epsilon.
\end{align*}
In other words, \((P^n)_{n \in \mathbb{N}}\) satisfies also (b) in Fact \ref{fact} and the proof is complete.
\end{proof}
We are in the position to complete the proof of Theorem \ref{theo: limit}. To wit, in view of \cite[Corollary to Theorem 5.1]{bil99}, because \(\{(Y^n_t, Z^n_t)_{t \geq 0}, n \in \mathbb{N}\}\) is tight by the previous lemma, for \(P^n \to P_{(x, i)}\) weakly as \(n \to \infty\), it remains to show that any accumulation point \(Q\) of \(\{(Y^n_t, Z^n_t)_{t \geq 0},n \in \mathbb{N}\}\) coincides with \(P_{(x, i)}\).
It follows as in the proof of Lemma \ref{lem: uni1} that the process \((M^f_{t})_{t \geq 0}\) is a \(Q\)-martingale for all \(f \in D\). 
Since \(\omega \mapsto \omega(0)\) is continuous, we also have \(Q(X_0 = (x, i)) = 1\) and, because \(\Sigma\) is closed in \(\Omega\), the Portmanteau theorem yields that \(Q (\Sigma) = 1\).
It follows that \(Q\) solves the MP \((D, \mathcal{L}, \Sigma, (x, i))\). Due to the uniqueness assumption, \(Q = P_{(x, i)}\) and the proof is complete.
\end{proof}
\subsubsection{Proof of Proposition \ref{prop: uni}}
	The existence is shown in the proof of Theorem \ref{prop: equiv}.
	The uniqueness follows from a Yamada-Watanabe argument, which we only sketch.
	Fix \(y = (x, i) \in S\) and suppose that \(P_y\) and \(Q_y\) solve the MP \((\mathcal{L}, D, \Sigma, y)\).
	Using similar arguments as in the proof of \cite[Theorem 8.3]{J80}, we obtain the following: We find a filtered probability space satisfying the usual hypothesis on which we can realize \(P_y\) as the law of the process \((Y_t, Z_t)_{t \geq 0}\), where \((Z_t)_{t \geq 0}\) is a Markov chain with \(Q\)-matrix \(Q\) and \(Z_0 = i\) and 
	\begin{align*}
	\dd Y_t = b(Y_t, Z_t) \dd t + a^\frac{1}{2} (Y_t, Z_t) \dd W_t, \quad Y_0 = x,
	\end{align*}
	where \((W_t)_{t \geq 0}\) is a Brownian motion.
	On the same probability space, we can realize \(Q_y\) as the law of \((V_t, Z_t)_{t \geq 0}\), where
	\[
	\dd V_t = b(V_t, Z_t) \dd t + a^\frac{1}{2} (V_t, Z_t) \dd W_t, \quad V_0 = x.
	\]
	We stress that the driving system \((Z_t, W_t)_{t \geq 0}\) coincides for \((Y_t)_{t \geq 0}\) and \((V_t)_{t \geq 0}\).
		Now, we claim that \(Y_t = V_t\) for all \(t \in \mathbb{R}_+\) up to a null set. This immediately implies \(Q_y = P_y\). We prove this claim by induction. Let \((\tau_n)_{n \in \mathbb{N}}\) be the stopping times as defined in \eqref{eq: jump times}.
		We stress that a.s. \(\tau_n \nearrow \infty\) as \(n \to \infty\).
		On \(\{t \leq \tau_1\}\) we have 
		\begin{align*}
		Y_t &= x + \int_0^t b(Y_s, i) \dd s + \int_0^t a^\frac{1}{2}(Y_s, i) \dd W_s,\\
		V_t &= x + \int_0^t b(V_s, i) \dd s + \int_0^t a^\frac{1}{2}(V_s, i) \dd W_s.
		\end{align*}
		The strong existence hypothesis and Lemma \ref{lem: loc pathwise uniqueness} imply that \(Y_t = V_t\) for all \(t \leq \tau_1\) up to a null set. 
		Suppose that \(n \in \mathbb{N}\) is such that \(Y_t = V_t\) for all \(t \leq \tau_{n}\) up to a null set. Using classical rules for time-changed stochastic integrals, we obtain that on \(\{t \leq \tau_{n + 1} - \tau_n\} \cap \{Z_{\tau_n} = k\}\) 
		\begin{align*}
		Y_{t + \tau_n} &= Y_{\tau_n} + \int_{\tau_n}^{t + \tau_n} b(Y_s, k) \dd s + \int_{\tau_n}^{t + \tau_n} a^\frac{1}{2}(Y_s, k) \dd W_s
		\\&= Y_{\tau_n} + \int_{0}^{t} b(Y_{s + \tau_n}, k) \dd s + \int_{0}^{t} a^\frac{1}{2}(Y_{s + \tau_n}, k) \dd W^n_s
		\end{align*}
		and
		\begin{align*}
		V_{t + \tau_n} &= V_{\tau_n} + \int_{0}^{t} b(V_{s + \tau_n}, k) \dd s + \int_{0}^{t} a^\frac{1}{2}(V_{s + \tau_n}, k) \dd W^n_s,
		\end{align*}
		where 
		\[
		W^n_t \triangleq W_{t + \tau_n} - W_{\tau_n},\quad t \in \mathbb{R}_+.
		\]
		We conclude again from the strong existence hypothesis and Lemma \ref{lem: loc pathwise uniqueness} that \(Y_{t + \tau_n} = V_{t + \tau_n}\) for all \(t \leq \tau_{n + 1} - \tau_n\) up to a null set. Consequently, \(Y_t = V_t\) for all \(t \leq \tau_{n +1}\) up to a null set and our claim follows.
\qed

\subsubsection{Proof of Corollary \ref{coro: complete}}
	Due to \cite[Theorems 5.5.15, 5.5.29]{KaraShre} and \cite[Corollary 11.1.5]{SV}, for all \(i \in S_d\) the family \((P^i_x)_{x \in \mathbb{R}^d}\) exists uniquely and is \(C_b\)-Feller. 
	Using the local H\"older condition on the diffusion coefficient, \cite[Lemma IX.3.3, Proposition IX.3.2]{RY} and \cite[Theorem 18.14]{Kallenberg} imply that \((P^i_x)_{x \in \mathbb{R}^d}\) exists strongly. Consequently, \((P_x)_{x \in S}\) exists uniquely due to Proposition \ref{prop: uni}.  
	Now, \((P_x)_{x \in S}\) is strongly Markov and \(C_b\)-Feller due to Proposition \ref{prop: sd feller} and the equivalence of (i) and (ii) follows from Theorem \ref{prop: equiv}, Remark \ref{rem: eqivalence} and \cite[Theorem 8.4.1]{pinsky1995positive}.
\qed
\subsubsection{Proof of Corollary \ref{coro: conv}}
Due to \cite[Theorem 5.2.5]{KaraShre}, \cite[Theorem 18.14]{Kallenberg} and \cite[Corollary 11.1.5]{SV}, for all \(i \in S_d\) the family \((P^i_x)_{x \in \mathbb{R}^d}\) exists strongly and is \(C_b\)-Feller. Consequently, \((P_x)_{x \in S}\) exists uniquely due to Proposition \ref{prop: uni}.  
As in the proof of Proposition \ref{prop: sd}, we deduce from Theorem \ref{theo: Ly} that \((P^i_x)_{x \in \mathbb{R}^d}\) is Feller-Dynkin for all \(i \in S_d\). 
Now, \((P_x)_{x \in S}\) is strongly Markov and \(C_b\)-Feller due to Proposition \ref{prop: sd feller} and Feller-Dynkin due to Theorem \ref{prop: equiv}.
\qed
\subsubsection{Proof of Corollary \ref{prop: nf}}
Due to \cite[Theorem 5.2.5]{KaraShre}, \cite[Theorem 18.14]{Kallenberg} and \cite[Corollary 11.1.5]{SV}, the family \((P^i_x)_{x \in \mathbb{R}^d}\) exists strongly and is \(C_b\)-Feller.
	Moreover, as in the proof of Proposition \ref{prop: finite environment NFDP}, we deduce from Theorem \ref{prop:cont} that \((P^i_x)_{x \in \mathbb{R}^d}\) is not Feller-Dynkin. Finally, the claim follows from Proposition \ref{lem: non-existence}.
	\qed
\appendix 
\section{An Existence Theorem for Switching Diffusions}\label{appendix}
In this appendix we give an existence theorem for switching diffusions with state-independent switching.  
We pose ourselves in the setting of Section \ref{sec: sic}. 

\begin{theorem}\label{theo: existence}
	Let \(b \colon S \to \mathbb{R}^d\) and \(a \colon S \to \mathbb{S}^d\) be continuous functions such that for all \(m \in \mathbb{R}_+\)
	\begin{align}\label{eq: uni growth}
	\sup_{\|x\| \leq m} \sup_{i \in S_d} \big(\|b(x, i)\| + \|a (x, i)\| \big) < \infty.
	\end{align} Let \(\mathcal{K}^i\) be given as in \eqref{eq: Ki}.
	Suppose that there exists two constants \(c, \lambda > 0\), a function \(v \colon \mathbb{R}_+ \to (0, \infty)\) and a twice continuously differentiable function \(V \colon \mathbb{R}^d \to \mathbb (0,\infty)\) such that \(V(x) \geq v(\|x\|)\) for all \(x \in \mathbb{R}^d \colon \|x\| \geq \lambda\), \(\limsup_{n \to \infty} v(n) = \infty\) and 
\[
\mathcal{K}^i V (x) \leq c V(x), 
\]
for all \((x, i) \in S\). Then, for any Borel probability measure \(\eta\) on \(S\) there exists a solution to the MP \((D,\mathcal{L},  \Sigma, \eta).\)
\end{theorem}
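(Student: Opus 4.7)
The plan is to construct $P$ as a weak accumulation point of solutions to truncated martingale problems and to use $V$ to provide the uniform tightness needed both to produce the approximations and to pass to the limit.

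First I would regularize the coefficients. For each $n\in\mathbb{N}$ choose a cutoff $\phi_n\in C_c^\infty(\mathbb{R}^d)$ with $0\le\phi_n\le 1$ and $\phi_n\equiv 1$ on $\{\|x\|\le n\}$, and set $b_n\triangleq\phi_n b$, $a_n\triangleq\phi_n a$. These are bounded continuous on $S$ with $b_n\to b$, $a_n\to a$ locally uniformly. A solution $P^n$ to the truncated MP $(D,\mathcal{L}_n,\Sigma,\eta)$ is constructed as follows: further regularize $b_n$ and a fixed Borel square root of $a_n$ by mollification to obtain bounded Lipschitz coefficients; on a single probability space, place a Markov chain $Z$ of $Q$-matrix $Q$ and initial law $\pi_2\eta$ independently of a Brownian motion $W$, together with an $\mathbb{R}^d$-valued initial random variable distributed according to the disintegration $\eta_1(\cdot\mid Z_0)$ of $\eta$; then recursively solve the resulting Lipschitz switching SDE strongly between successive jump times of $Z$. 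By Lemma \ref{lem: representation}, the law of the pair $(Y,Z)$ obtained this way solves the mollified truncated MP. Letting the mollification parameter tend to infinity and using the Kolmogorov-type estimate \eqref{eq: tight bound} (uniform once the coefficients are bounded) for tightness, a weak accumulation point solves $(D,\mathcal{L}_n,\Sigma,\eta)$ by the standard martingale-problem continuity argument.

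The heart of the proof is the uniform Lyapunov control. Although $V$ may not lie in $D$, Lemma \ref{lem: representation} provides an explicit SDE representation for $P^n$, so It\^o's formula applied to $V\in C^2$ directly yields that
\[
V(Y_t)-V(Y_0)-\int_0^t\mathcal{K}_n V(X_s)\,ds,\qquad t\ge 0,
\]
is a local $P^n$-martingale (the $Q$-part vanishes since $V$ depends only on the $\mathbb{R}^d$-coordinate and $Q$ is conservative). Because $\phi_n\in[0,1]$, one has $\mathcal{K}_n V=\phi_n\,\mathcal{K}V\le cV$ with the \emph{same} constant $c$ for every $n$; hence, after stopping at $\tau^R\triangleq\inf\{t\ge 0:\|Y_t\|\ge R\}$ and multiplying by $e^{-ct}$, one obtains a bounded nonnegative $P^n$-supermartingale. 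Markov's inequality, combined with the continuity of $Y$ (so that $\|Y_{\tau^R}\|=R$ on $\{\tau^R<\infty\}$), gives
\[
P^n(\tau^R\le T)\;\le\;\frac{e^{cT}\int V\,d\eta_1}{v(R)}\qquad(R\ge\lambda),
\]
uniformly in $n$. Since $\limsup_{n\to\infty}v(n)=\infty$, there is $R_k\to\infty$ along which the right-hand side vanishes, which supplies the compact-containment part of Fact \ref{fact} for $(P^n)$ in the $\mathbb{R}^d$-component.

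Combined with tightness of the $S_d$-component (the law of $Z$ under $P^n$ does not depend on $n$) and the modulus-of-continuity estimate \eqref{eq: tight bound} applied to $Y_{\cdot\wedge\tau^R}$ (which uses the uniform local boundedness of $(b_n,a_n)$), this renders $(P^n)$ tight. Let $P$ be any weak accumulation point. For $h\in D$, locally uniform convergence $\mathcal{L}_n h\to\mathcal{L}h$, continuity of $\mathcal{L}h$, and the stopped-martingale argument of Lemma \ref{lem: uni1} allow one to pass to the limit in $n$ in
\[
h(X_{t\wedge\tau^R})-h(X_0)-\int_0^{t\wedge\tau^R}\mathcal{L}_n h(X_s)\,ds;
\]
sending $R\to\infty$ with the help of the previous paragraph identifies $h(X_t)-h(X_0)-\int_0^t\mathcal{L}h(X_s)\,ds$ as a local $P$-martingale for every $h\in D$. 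The remaining requirements $P(\Sigma)=1$ and $P\circ X_0^{-1}=\eta$ follow from the closedness of $\Sigma$ together with the Portmanteau theorem and the continuity of $\omega\mapsto\omega(0)$, respectively. The main obstacle is keeping the Lyapunov constant $c$ independent of the truncation level $n$---this is what forces the cutoff to be applied multiplicatively by $\phi_n\in[0,1]$---while also being able to apply It\^o's formula to the unbounded function $V$, a step made accessible by the SDE representation of Lemma \ref{lem: representation} rather than by the martingale problem itself.
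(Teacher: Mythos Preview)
Your approach mirrors the paper's proof closely: both use a two-level approximation (mollification to handle the bounded case, then a multiplicative cutoff $\phi_n\in[0,1]$ to reduce the general case to the bounded one while preserving the Lyapunov constant $c$), and both exploit the supermartingale $e^{-ct}V(Y_{t\wedge\tau^R})$ to obtain the uniform-in-$n$ exit bound that upgrades stopped tightness to tightness of $(P^n)$. Your explicit use of the SDE representation (Lemma~\ref{lem: representation}) to justify It\^o's formula for the unbounded $V\notin D$ is in fact cleaner than the paper's somewhat implicit appeal to Lemma~\ref{rem: spacetime}.

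There is, however, one genuine gap. You work directly with a general initial law $\eta$ and write the key bound as
\[
P^n(\tau^R\le T)\le \frac{e^{cT}\int V\,d\eta_1}{v(R)}.
\]
Nothing in the hypotheses guarantees $\int V\,d\eta_1<\infty$; for instance, with $V(x)=1+\|x\|^2$ and $\eta_1$ heavy-tailed, the right-hand side is identically $+\infty$ and the bound is vacuous. The paper avoids this by first invoking Proposition~\ref{prop: existence initial law} to reduce to degenerate initial laws $\eta=\delta_{(x,i)}$, so that the supermartingale inequality yields the finite bound $e^{cT}V(x)/v(R)$. You should insert this reduction at the outset (the prerequisites of Proposition~\ref{prop: existence initial law} are met via Remark~\ref{rem: after lem 2}(ii)); once you do, your construction of the approximating solutions simplifies as well, since you no longer need the disintegration of $\eta$ and can work with a deterministic starting point throughout.
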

\begin{proof}
	Due to Proposition \ref{prop: existence initial law} in Appendix \ref{app: prop exietence initial law}, it suffices to show the claim for degenerated initial laws, i.e. we assume that \(\eta(\{y\}) = 1\) for some \(y \in S\). 
	
	\textit{Step 1.} We first show the claim under the assumptions that \(b\) and \(a\) are continuous and bounded, i.e.
	\(
	\|b(x, i)\| + \|a(x, i)\| \leq c^*
	\)
	for all \((x, i)\in S\).
	Our initial step is a standard mollification argument.
	Let \(\phi\) be the standard mollifier, i.e. 
	\begin{align*}
	\phi(x) \triangleq \begin{cases} \theta \exp \big\{ - (1 - \|x\|^2)^{-1}\big\},&\text{if } \|x\| < 1,\\0,&\text{otherwise,}\end{cases}
	\end{align*}
	where \(\theta > 0\) is a constant such that \(\int \phi(x)\dd x = 1\). Let \(\sigma\) be a root of \(a\).
	For \((x, i) \in S\) we set 
	\begin{align*}
	b_n (x, i) &\triangleq n^d \int b(y, i) \phi(n (x - y)) \dd y, \\ \sigma_n (x, i) &\triangleq n^d \int \sigma(y, i) \phi(n (x - y)) \dd y.
	\end{align*}
	It is well-known that \(x \mapsto b_n(x, i)\) and \(x \mapsto \sigma_n(x, i)\) are smooth for all \(i \in S_d\) and that \(b_n \to b\) and \(\sigma_n \sigma_n^* \to a\) as \(n \to \infty\) uniformly on compact subsets of \(S\).
	Furthermore, using that \(\int \phi(x)\dd x=1\), we obtain
	\begin{align*}
	\|b_n(x, i)\| &\leq n^d \int \|b(y, i)\| \phi(n (x - y))\dd y 
	= \int \|b(x - n^{-1} z, i)\| \phi(z)\dd z \leq c^*
	\end{align*}
	and, in the same manner,
\(
\|\sigma_n(x, i)\| \leq c^*
\)
for all \((x, i) \in S\).
	Because smooth functions are locally Lipschitz continuous, we deduce from \cite[Theorem 18.16]{schilling2014brownian}, \cite[Theorem 18.14]{Kallenberg} and Proposition \ref{prop: uni} that for each \(n \in \mathbb{N}\) there exists a solution \(P^n\) to the MP \((D, \mathcal{L}_n, \Sigma, y)\), where \(\mathcal{L}_n\) is defined as in \eqref{gen: sd} with \(b\) replaced by \(b_n\) and \(a\) replaced by \(a_n\). If we show that the sequence \((P^n)_{n \in \mathbb{N}}\) is tight and that any accumulation point of it solves the MP \((D,\mathcal{L}, \Sigma, y)\) the claim of the theorem follows.
That any accumulation point of \((P^n)_{n \in \mathbb{N}}\) solves the MP \((D,\mathcal{L}, \Sigma, y)\) can be shown as in the proof of Theorem \ref{theo: limit} and that \((P^n)_{n \in \mathbb{N}}\) is tight follows as in the proof of Lemma \ref{lem: tight1}. Thus, the claim holds under the assumptions that \(b\) and \(a\) are continuous and bounded. 

\textit{Step 2.} We now tackle the general case. Let \(\psi^n\colon \mathbb{R}^d \to [0, 1]\) be a sequence of cutoff functions, i.e. non-negative smooth functions with compact support such that \(\psi^n (x) = 1\) for \(x \in \mathbb{R}^d \colon \|x\| \leq n\). We set 
\[
b_n(x, i) \triangleq \psi^n(x) b(x, i),\qquad a_n(x, i) \triangleq \psi^n(x) a(x, i), \quad (x, i) \in S.
\]
The functions \(b_n\) and \(a_n\) are continuous and bounded. Therefore, due to our first step, for each \(n \in \mathbb{N}\) there exist a solution \(P^n\) to the MP \((D, \mathcal{L}_n, \Sigma, y)\).  
We write \((X_t)_{t \geq 0} = (X^1_t, X^2_t)_{t \geq 0}\) and set 
\[
\tau_m \triangleq \inf \big(t \in \mathbb{R}_+ \colon \|X^1_t\| \geq m \textup{ or } \|X^1_{t-}\| \geq m \big),\quad m \in \mathbb{R}_+.
\]
Furthermore, we denote \(P^{n, m} \triangleq P^n \circ (X^1_{t \wedge  \tau_m}, X^2_t)_{t \geq 0}^{-1}\).
It follows as in the proof of Lemma \ref{lem: tight1} that the sequence \((P^{n, m})_{n \in \mathbb{N}}\) is tight for every \(m \in \mathbb{R}_+\).
We note that for all \(m \in \mathbb{R}_+\) 
\begin{align*}
\sup_{\tri x \tri \leq m} &\big(\|b (x) - b_n(x)\| + \|a(x) - a_n(x)\| \big) 
\\&\leq 2 \sup_{\tri x \tri \leq m} \big(\|b (x)\| + \|a(x)\| \big) \sup_{\|z\| \leq m} |1 - \psi^n(z)| \to 0
\end{align*}
as \(n \to \infty\).
Thus, recalling the proof of Lemma \ref{lem: tight2} and Step 1 reveal that the existence of a solution to the MP \((D, \mathcal{L}, \Sigma, y)\) follows once we prove that for each \(T > 0\) and \(\epsilon > 0\) we find an \(m \in \mathbb{R}_+\) such that 
\begin{align}\label{eq: to show 2}
\limsup_{n \to \infty} P^{n} (\tau_m \leq T) \leq  \epsilon.
\end{align}
Define \(\mathcal{K}^{i, n}\) as \(\mathcal{K}^i\) with \(b\) and \(a\) replaced by \(b_n\) and \(a_n\). We have 
\begin{align*}
\mathcal{K}^{i, n} V(x) = \psi^n(x) \mathcal{K}^i V(x) \leq c \psi^n(x) V(x) \leq c V(x)
\end{align*}
for all \((x, i) \in S\) and \(n \in \mathbb{N}\).
By Lemma \ref{rem: spacetime} the process
\[
U_t \triangleq e^{- c (t \wedge \tau_m)} V(X^1_{t \wedge \tau_m}) + \int_0^{t \wedge \tau_m} e^{- cs} \big(c V(X^1_s)  - \mathcal{K}^{X^2_s, n} V (X^1_s)\big)\dd s,\quad t \in \mathbb{R}_+,
\]
is a local \(P^{n}\)-martingale. Furthermore, because \(U_t \geq e^{- c(t \wedge \tau_m)} V(X^1_{t \wedge \tau_m}) \geq 0\) for all \(t \in \mathbb{R}_+\), the process \((U_t)_{t \geq 0}\) is a non-negative \(P^{n}\)-supermartingale. 
We deduce that for all \(m \geq \lambda \vee \|x\|\)
\begin{align*}
P^{n}( \tau_m \leq T) e^{- cT} v(m) &= E^{n} \Big[ \1_{\{\tau_m \leq T\}} e^{- cT} v(\|X^1_{\tau_m}\|)\Big]
\\&\leq E^{n} \Big[ \1_{\{\tau_m \leq T\}} e^{- c(T \wedge \tau_m)} V\big(X^1_{T \wedge \tau_m}\big) \Big]
\\&\leq E^{n} \Big[  e^{- c(T \wedge \tau_m)} V\big(X^1_{T \wedge \tau_m}\big) \Big]\
\\&\leq E^{n} \Big[ U_{T} \Big]
\leq V(x),
\end{align*}
where \(y = (x, i)\).
The assumption \(\limsup_{m \to \infty} v(m) = \infty\) yields that we find an \(m \geq\lambda\) such that 
\eqref{eq: to show 2} holds. 
This completes the proof.
\end{proof}
\begin{remark}
	\begin{enumerate}
		\item[\textup{(i)}]
	On one hand, the previous existence result does not require any uniqueness or strong existence assumption for the SDEs for the fixed environments. On the other hand, it does not provide a uniqueness statement.
	\item[\textup{(ii)}] Using \(V(x) = 1 + \|x\|^2\) yields that the growth condition
	\[
	2\langle x, b(x, k)\rangle + \textup{trace } a(x, k) \leq c \big(1 + \|x\|^2\big),\quad \text{for all }(x, k) \in S, 
	\] 
	implies the existence of a solution to the MP \((D, \mathcal{L}, \Sigma, \eta)\) whenever the coefficients \(b\) and \(a\) are continuous and satisfy \eqref{eq: uni growth}.
	\end{enumerate}
\end{remark}

\section{The Role of Initial Laws}\label{app: prop exietence initial law}
For the setting of Example \ref{expl: 1} it is known that the existence of (unique) solutions for all degenerated initial laws implies the existence of (unique) solutions for all initial laws, see \cite[Propositions 1 and 2]{10.2307/2244838}.  
The following proposition shows that these observations also hold in our setting. The proof is close to the diffusion case and we only sketch it.
\begin{proposition}\label{prop: existence initial law}
	Suppose that \(D\) is countable, that \(D \subseteq C_b(S)\) and that \(\mathcal{L}(D) \subseteq B_\textup{loc}(S)\). Furthermore, let \(\eta\) be a Borel probability measure on \(S\). If for all \(y \in S\) the MP \((D, \mathcal{L}, \Sigma, y)\) has a solution \(P_y\), then also the MP \((D, \mathcal{L}, \Sigma, \eta)\) has a solution. Moreover, if the family \((P_y)_{y \in S}\) is unique, then \(y \mapsto P_y(A)\) is Borel for all \(A \in \mathcal{F}\) and \(\int P_y\eta(\dd y)\) is the unique solution to the MP \((D, \mathcal{L}, \Sigma, \eta)\).
\end{proposition}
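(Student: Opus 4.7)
The plan is to follow the Stroock--Varadhan template adapted to the abstract MP setting: exhibit the graph of (initial point, solution)-pairs as a Borel subset of $S \times \mathcal{P}(\Omega)$, select a measurable family by Jankov--von Neumann, mix against $\eta$ to produce a solution, and in the unique case obtain uniqueness by disintegrating an arbitrary candidate with respect to $X_0$.

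Set
\[
\mathcal{M} \triangleq \big\{(y, P) \in S \times \mathcal{P}(\Omega) \colon P \text{ solves the MP } (D, \mathcal{L}, \Sigma, y)\big\},
\]
where $\mathcal{P}(\Omega)$ carries the weak topology and is therefore Polish. The conditions $P(\Sigma) = 1$ and $P(X_0 = y) = 1$ are plainly Borel in $(y, P)$ (the second via a countable dense family in $C_b(S)$). For the local martingale condition, fix a compact exhaustion $(K_n)_{n \in \mathbb{N}}$ of $S$ and define $\tau_n \triangleq \inf(t \in \mathbb{R}_+ \colon X_t \notin K_n \text{ or } X_{t-} \notin K_n)$. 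Under the hypotheses $D \subseteq C_b(S)$ and $\mathcal{L}(D) \subseteq B_{\textup{loc}}(S)$, the stopped process $(M^f_{t \wedge \tau_n})_{t \in [0, T]}$ associated with \eqref{eq: Mf} is uniformly bounded by $2\|f\|_\infty + T \sup_{K_n}|\mathcal{L} f|$ independent of $P$, so ``$M^f$ is a local $P$-martingale'' reduces to the countable family of bounded linear identities
\[
E^P\big[M^f_{t \wedge \tau_n} \mathbf{1}_A\big] = E^P\big[M^f_{s \wedge \tau_n} \mathbf{1}_A\big]
\]
for rational $s < t$, $n \in \mathbb{N}$, $f \in D$, and $A$ in any fixed countable $\pi$-system generating $\mathcal{F}^o_s$. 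Each such identity is Borel in $P$, and the countability of $D$ then makes $\mathcal{M}$ a Borel subset of $S \times \mathcal{P}(\Omega)$.

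Because the projection $\mathcal{M} \to S$ is surjective by hypothesis, the Jankov--von Neumann selection theorem produces a universally measurable selector $y \mapsto P_y$, and the mixture $P \triangleq \int_S P_y \,\eta(\mathrm{d}y)$ is a Borel probability on $\Omega$ satisfying $P(\Sigma) = 1$ and $P \circ X_0^{-1} = \eta$; integrating the uniformly bounded $P_y$-martingale identities against $\eta$ shows $(M^f_{t \wedge \tau_n})_{t \geq 0}$ is a $P$-martingale, and $\tau_n \nearrow \infty$ $P$-a.s.\ yields that $M^f$ is a local $P$-martingale. Under the uniqueness hypothesis, $\mathcal{M}$ is the graph of the single-valued map $y \mapsto P_y$, so the Borel graph theorem (Borel graph with Polish target implies Borel map) yields that $y \mapsto P_y(A)$ is Borel for every $A \in \mathcal{F}$. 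If $P^*$ is any solution of $(D, \mathcal{L}, \Sigma, \eta)$, disintegrate $P^* = \int Q_y \,\eta(\mathrm{d}y)$ via regular conditional probabilities given $X_0$; since $\sigma(X_0) \subseteq \mathcal{F}_s$ for every $s$, conditioning the $P^*$-martingale identities on $X_0$ and choosing a countable collection of $(f, s, t, A, n)$ shows that $Q_y$ solves $(D, \mathcal{L}, \Sigma, y)$ for $\eta$-a.e.\ $y$. Uniqueness then forces $Q_y = P_y$ $\eta$-a.s., whence $P^* = \int P_y \,\eta(\mathrm{d}y)$.

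The main obstacle is proving that $\mathcal{M}$ is Borel: without global boundedness of $\mathcal{L}(D)$, the local martingale requirement cannot be written directly as countably many bounded linear constraints. The interplay between $\mathcal{L}(D) \subseteq B_{\textup{loc}}(S)$, $D \subseteq C_b(S)$, and the localizing sequence $\tau_n$ exiting a compact exhaustion is precisely what resolves this, by turning each stopped process into a bounded martingale whose martingale property is Borel in $P$. The remaining ingredients---Jankov--von Neumann selection, mixing, the Borel graph theorem, and disintegration via regular conditional probabilities---are standard Polish-space measure-theoretic tools.
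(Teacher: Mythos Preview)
Your proposal is correct and follows essentially the same route as the paper: reduce the local martingale condition to countably many bounded stopped-martingale identities via a compact exhaustion and the stopping times $\tau_n$ (this is exactly where $D \subseteq C_b(S)$, $\mathcal{L}(D) \subseteq B_{\textup{loc}}(S)$, and the countability of $D$ are used), deduce that the solution set is Borel, apply a measurable selection theorem to mix against $\eta$, and in the unique case obtain Borel measurability of $y \mapsto P_y$ and uniqueness by disintegration. The only cosmetic differences are in the choice of descriptive-set-theoretic tools: the paper invokes the section theorem from Kallenberg (Borel selector off an $\eta$-null set) and Kuratovski's theorem, whereas you cite Jankov--von Neumann (universally measurable selector) and the Borel graph theorem; these are interchangeable for the present purpose.
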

\begin{proof}[Sketch of Proof]
	We assume that the MP \((D, \mathcal{L}, \Sigma, y)\) has a solution for all \(y \in S\). 
	Let \(\eta\) be a Borel probability measure on \(S\) and let \(\mathcal{P}\) denote the set of all solutions to the MP \((D, \mathcal{L}, \Sigma, y)\) for all \(y \in S\). We consider \(\mathcal{P}\) as a subspace of the Polish space \(\mathscr{P}\) of probability measures on \((\Omega, \mathcal{F})\) equipped with the topology of convergence in distribution. 
	Let \((K_n)_{n \in \mathbb{N}} \subset S\) be a sequence of compact sets such that \(K_n \subset \textup{int}(K_{n + 1})\) and \(\bigcup_{n \in \mathbb{N}} K_n = S\). For all \(n \in \mathbb{N}\) define \(\tau_n \triangleq \inf (t \in \mathbb{R}_+ \colon X_t \not \in \textup{int}(K_n) \text{ or } X_{t-}\not \in \textup{int} (K_n))\) and for \(f \in D\) denote the process \eqref{eq: Mf} by \((M^f_t)_{t \geq 0}\).
	Because we assume that \(\mathcal{L}(D) \subseteq B_\textup{loc}(S)\), a probability measure \(P\) solves the MP \((D, \mathcal{L}, \Sigma, \eta)\) if and only if \(P(\Sigma) = 1, P \circ X^{-1}_0 = \eta\) and for all \(f \in D\) and \(n \in \mathbb{N}\) the stopped process \((M^f_{t \wedge \tau_n})_{t \geq 0}\) is a \(P\)-martingale for the filtration \((\mathcal{F}^o_t)_{t \geq 0}\). Because  \(D\) is  assumed to be countable, the argument outlined in \cite[Exercise 6.7.4]{SV}
	shows that \(\mathcal{P}\) is a Borel subset of \(\mathscr{P}\).
	Thus, \(\mathcal{P}\) is a Borel space in the sense of \cite[p. 456]{Kallenberg}.
	Let \(\Phi\colon \mathcal{P} \to S\) be such that \(\Phi (P)\) is the starting point associated to \(P \in \mathcal{P}\). 
	We note that \(\Phi\) is continuous and that 
	its graph \(G \triangleq \big\{ (P, \Phi(P)) \colon P \in \mathcal{P}\big\}\) is a Borel subset of \(\mathcal{P} \times S\).
	We have 
	\(
	\bigcup_{P \in \mathcal{P}} \big\{s \in S \colon s = \Phi(P)\big\} = S,
	\)
	by the assumption that there exist solutions for all degenerated initial laws.
	Using the section theorem \cite[Theorem A.1.8]{Kallenberg} we see that there exists a Borel map \(x \mapsto P_x\) and a \(\eta\)-null set \(N \in \mathcal{B}(S)\) such that \((P_x, x) \in G\) for all \(x \not \in N\). By the definition of \(G\), for all \(x \not \in N\) the probability measure \(P_x\) solves the MP \((D, \mathcal{L}, \Sigma, x)\). 
	It follows that the probability measure \(\int P_x \eta(\dd x)\) 
	solves the MP \((D,\mathcal{L}, \Sigma, \eta)\). 
	
	Assume now that \(P_x\) is the unique solution to the MP \((D, \mathcal{L}, \Sigma, x)\) for all \(x \in S\). Using Kuratovski's theorem as outlined in \cite[Exercise 6.7.4]{SV} shows that \(x \mapsto P_x\) is Borel.
	Let \(P\) be a solution to the MP \((D, \mathcal{L}, \Sigma, \eta)\). Arguing as in the proof of \cite[Lemma 5.4.19]{KaraShre} shows that
	there exists a null set \(N \in \mathcal{F}_0^o\) such that \(P(\cdot |\mathcal{F}_0^o)(\omega)\) solves the MP \((D, \mathcal{L}, \Sigma, X_0(\omega))\) for all \(\omega \not \in N\).
	By the uniqueness assumption, this yields that \(P\)-a.s. \(P_{X_0} = P(\cdot |\mathcal{F}^o_0)\). Using this observation together with the tower rule shows that \(P = \int P_x \eta(\dd x)\).
\end{proof}

It is often the case that the input data of a martingale problem can be reduced such that the prerequisites of Proposition \ref{prop: existence initial law} are met, see Proposition \ref{prop: redc} and Example \ref{ex: MC}.
\bibliographystyle{agsm}

\end{document}